\theoremstyle{plain}
        \newtheorem{theorem}{Theorem}[section]
        \newtheorem{lemma}[theorem]{Lemma}
        \newtheorem{proposition}[theorem]{Proposition}
        \newtheorem{corollary}[theorem]{Corollary}
        \theoremstyle{definition}
        \newtheorem{definition}[theorem]{Definition}
        \newtheorem{remark}[theorem]{Remark}
\newtheorem*{thma}{Theorem A}
\newtheorem*{thmb}{Theorem B}
\newcommand{\N}{{\mathbb N}}
\newcommand{\C}{{\mathbb C}}
\newcommand{\R}{{\mathbb R}}
\newcommand{\mbA}{{\mathbb A}}
\newcommand{\sfG}{\mathsf{G}}
\newcommand{\supp}{\operatorname{supp}}
\newcommand{\Ch}{\operatorname{Ch}}
\newcommand{\Hom}{\operatorname{Hom}}
\newcommand{\symb}{\operatorname{Sym}}
\newcommand{\asymb}{\operatorname{ASym}}
\newcommand{\jsymb}{\operatorname{JSym}}
\newcommand{\symm}{\mathsf{S}}
\newcommand{\op}{\operatorname{Op}}
\newcommand{\tr}{\operatorname{tr}}
\newcommand{\lb}{\left<}
\newcommand{\rb}{\right>}
\newcommand{\Op}{\operatorname{Op}}
\newcommand{\End}{\operatorname{End}}
\newcommand{\Ind}{\operatorname{Ind}}
\newcommand{\Tot}{\operatorname{Tot}}
\newcommand{\Mat}{\operatorname{Mat}}
\newcommand{\calW}{\mathcal{W}}
\newcommand{\calE}{\mathcal{E}}
\newcommand{\calD}{\mathcal{D}}
\newcommand {\calA}{\mathcal{A}}
\newcommand{\calF}{\mathcal{F}}
\newcommand{\U}{\mathscr{U}}
\newcommand{\calU}{\mathcal{U}}
\newcommand{\calC}{\mathcal{C}}
\newcommand{\llangle}{\langle\!\langle}
\newcommand{\rrangle}{\rangle\!\rangle}
\title[The localized longitudinal index theorem]{The localized longitudinal index theorem for Lie groupoids and the van Est map}
\author{M.J.~Pflaum, H. Posthuma,~\textrm{and} X.~Tang}
\address{\newline
Markus J. Pflaum, {\tt markus.pflaum@colorado.edu}\newline
         \indent {\rm Department of Mathematics, University of Colorado,
         Boulder, USA}
         \newline
        Hessel Posthuma, {\tt H.B.Posthuma@uva.nl}\newline
         \indent {\rm Korteweg-de Vries Institute for Mathematics,
        University of Amsterdam,
         The Netherlands} 
         \newline
        Xiang Tang, {\tt xtang@math.wustl.edu}   \newline
         \indent {\rm  Department of Mathematics, Washington University,
         St.~Louis, USA}}
\begin{document}
\maketitle
\begin{abstract}
We define the ``localized index'' of longitudinal elliptic operators on Lie groupoids associated to Lie algebroid cohomology classes. 
We derive a topological expression for these numbers using the algebraic index theorem for Poisson manifolds on the dual of the Lie algebroid.
Underlying the definition and computation of the localized index, is an action of the Hopf algebroid of
jets around the unit space, and the characteristic map it induces on Lie algebroid cohomology.
This map can be globalized to differentiable groupoid cohomology, giving a definition as well as a computation of the ``global index''. 
The correspondence between the ``global'' and ``localized'' index is given by the van Est map for Lie groupoids.
\end{abstract}
\tableofcontents

\section*{Introduction}
In \cite{CoSk}, Connes and Skandalis proved a milestone result in noncommutative geometry and index theory. 
They developed a powerful machinery to prove a far reaching index theorem for leafwise elliptic operator on 
a regular foliation of a compact manifold. This theorem has inspired numerous works in 
foliation theory and noncommutative geometry, as for example \cite{DoHuKa,G,HM}. 
In \cite[Chap.~II, Sec.~10.$\varepsilon$]{connes}, Connes  raised the index problem for general Lie groupoids as a generalization of this foliation index theorem, as well as the Atiyah--Singer Covering Index Theorem and Connes--Moscovici's Homogeneous Space Index Theorem. This has been a long standing open problem in noncommutative geometry. In this paper, we present some progress towards understanding Connes' conjecture.

Let us briefly recall the setting of this conjecture. Let $\sfG\rightrightarrows M$ be a Lie groupoid with Lie algebroid $A$.
Given an elliptic  $\sfG$-differential operator $D$ along the fibers of the source map, Connes defines its analytic index class $\Ind_{a}(D)$ in the $K$-theory of the convolution $C^*$-algebra $C^*(\sfG)$ when $M$ is compact. There are several constructions of this index class. One construction \cite{Carrillo-Rouse} uses Connes' tangent groupoid $\sfG^T$ which combines the Lie algebroid $A$ and $\sfG$ into one single groupoid. This induces a long exact sequence in $K$-theory which can be used to transfer the symbol class $\sigma(D)$ in $K^0(A^*)$ to $K_0(C^*(\sfG))$. This is exactly the analytic index class of $D$. For the pair groupoid of a manifold $M$ this class is the Fredholm index of the elliptic operator $D$ on $M$, for which the Atiyah--Singer index theorem gives a topological expression. In the case of a foliation groupoid, the 
Connes--Skandalis foliation index theorem mentioned above equates this class with a topological index class $\Ind_t(D)$. For a general Lie groupoid, there are many natural interesting examples of elliptic differential operators on $\sfG$, e.g. the leafwise signature operator. To find a topological construction for the analytic index class for a general Lie groupoid is a problem that is wide open since the 1980's.

A strategy to better understand this index class is to construct it in the $K$-theory of the {\em smooth} convolution algebra $\calA:= \calC^\infty_\textup{c} (\sfG)$, and compose with the Chern--Connes character to cyclic homology. This leads to studying  the pairing of the index class with the cyclic cohomology of $\calA$, which results in a number in $\C$, rather than an abstract class in $K$-theory, and the goal is to find a topological expression for this number. Although cyclic cohomology is much more computable than $K$-theory in general, a major obstruction to studying this pairing is that very little is known about the cyclic cohomology of the smooth convolution algebra of a general Lie groupoid.

In this paper, we construct explicit cyclic cocycles associated to differentiable groupoid cocycles, and compute the resulting pairing with the index class in terms of characteristic classes in the Lie algebroid cohomology of $A$. Let us explain in more detail what we prove. Throughout this paper, we assume to work with Hausdorff Lie groupoids with compact unit spaces. 

Our construction of cyclic cocycles on $\calA$ is inspired by the recent development of Hopf cyclic cohomology introduced by Connes and Moscovici \cite{conmos:hopf,conmos:algebroid}. 
A {\em Hopf algebroid} is a noncommutative version of a groupoid. In \cite{conmos:algebroid}, Connes and Moscovici introduced a characteristic map associated to an action of a Hopf algebroid on an algebra satisfying certain properties. By dualizing all the structure maps on the groupoid $\sfG$, the commutative algebra $\calC^\infty(\sfG)$ morally has the structure of a Hopf algebroid. The fact that this is only morally true has to do with  topological issues with tensor products, but this is irrelevant for our application. In our case, $\calC^\infty(\sfG)$ acts on the convolution algebra $\calA$ by simple pointwise multiplication, and the Connes-Moscovici  characteristic map leads to introduce the following characteristic map
\[
  \chi:\bigoplus_{i\geq 0}H^{\bullet+2i}_{\rm diff}(\sfG;L)\to HC^\bullet(\calA),
\]
where $L=\bigwedge^{\rm top}T^*M\otimes\bigwedge ^{\rm top}A$ is the line bundle of ``transverse densities'' \cite{elw} of the Lie algebroid $A$ equipped with a natural $\sfG$ action.  At this point, one does not have to worry anymore about completions and Hopf algebroids. 

We compute the pairing $\langle \chi(\alpha),\Ch(\Ind(D)) \rangle$ by localizing it to the unit space. We construct the index class $\Ind(D)$ in such a way that it has the property that it can be represented by idempotents in the convolution algebra that have support in an arbitrarily small neighbourhood of the unit space of $\sfG$. In fact, we construct in this paper a localized $K$-theory  $K_0^\textup{loc}(\calA)$ of the convolution algebra $\calA$  of a Lie groupoid. The
localized K-theory  of $\calA$ consists of homotopy classes of idempotents in $\calA$ supported in an arbitrarily small neighborhood of the unit space of $\sfG$. This generalizes the definition of Moscovici--Wu \cite{mw} for a manifold or, 
from our point of view, actually the pair groupoid to Lie groupoids. An elliptic $\sfG$-pseudodifferential operator $D$ defines a localized $K$-theory class $\Ind_{\textup{loc}}(D)$ in $K_0^\textup{loc}(\calA)$, which  maps to $\Ind(D)$ along the natural map $K_0^\textup{loc}(\calA)\rightarrow K_0(\calA)$. Analogously, the localization of the differentiable groupoid cohomology to the unit space is the Lie algebroid cohomology $H^\textup{ev}_{\rm Lie}(A;L)$. We remark that after localization to the unit space, the Hopf algebroid is well-defined on the level of jets at the unit space, cf. [KP], which accounts for the appearance of Lie algebroid cohomology. With this, we construct a ``localized index pairing"
\[
\llangle \ , \ \rrangle:\ H^\textup{ ev}_{\rm Lie}(A;L)\times K^\textup{loc}_0(\calA)\to\C.
\] 

In \cite{wx}, the authors constructed a natural ``van Est map'' $E:H^{\bullet}_{\rm diff}(\sfG;L)\to H^\bullet_{\rm Lie}(A;L)$ relating differentiable groupoid and Lie algebroid cohomology. It is exactly this map that controls the relation between the ``localized'' and the ``global'' index pairing:
\begin{thma} 
{\em
Let $\sfG$ be a Lie groupoid.  For the pairing with the Chern-Connes character of the index class of an elliptic $\sfG$-differential operator, one has the equality
\[
 \langle \chi(\alpha),\Ch(\Ind(D))\rangle=\llangle  E(\alpha),[\Ind(D)]_\textup{loc}\rrangle .
\]
}
\end{thma}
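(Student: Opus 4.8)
The strategy is to unwind both sides of the asserted equality into pairings of explicit cochains and then compare them term by term. On the right-hand side, the localized pairing $\llangle E(\alpha),[\Ind(D)]_\textup{loc}\rrangle$ is computed by representing $\Ind_\textup{loc}(D)$ by an idempotent $e$ in $\calA$ supported in an arbitrarily small tubular neighbourhood of $M$ inside $\sfG$, and then pairing its Chern character with the image under the characteristic map (now at the level of jets around the unit space) of the Lie algebroid cocycle $E(\alpha)$. On the left-hand side, $\langle\chi(\alpha),\Ch(\Ind(D))\rangle$ is the honest pairing in cyclic cohomology of $\calA$, where $\chi(\alpha)$ comes from the differentiable groupoid cocycle $\alpha$ and the index class is represented by the \emph{same} idempotent $e$ — this is precisely the point of constructing the index class inside $K_0^\textup{loc}(\calA)$ in the first place. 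So the two sides pair the Chern character of one and the same idempotent against two cyclic cocycles, $\chi(\alpha)$ and the ``localized'' cyclic cocycle built from $E(\alpha)$; it therefore suffices to show that these two cyclic cocycles agree when evaluated on chains supported near $M$, i.e.\ that they are cohomologous after restriction to the subalgebra of functions supported in a small neighbourhood of the unit space.

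First I would set up the comparison at the cochain level. The van Est map $E$ is realized by an explicit formula: a differentiable groupoid cochain $\alpha\in C^\bullet_\textup{diff}(\sfG;L)$ is restricted to jets along $M$ and then antisymmetrized/contracted to produce a Lie algebroid cochain $E(\alpha)\in C^\bullet_\textup{Lie}(A;L)$, compatible with the differentials. I would then trace how $\alpha$ enters the Connes--Moscovici characteristic map $\chi$: the cyclic cocycle $\chi(\alpha)$ is assembled from the $\sfG$-action on $\calA$ by pointwise multiplication together with cup products with the transverse density, and its value on a chain $(a_0,\dots,a_k)$ of elements of $\calA$ is an integral over a fibered product of copies of $\sfG$. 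When all the $a_i$ are supported in a small neighbourhood of $M$, this fibered product concentrates near the unit space, and a jet expansion in the transverse directions turns the groupoid integral into a Lie algebroid expression — precisely the characteristic map $\chi_\textup{Lie}$ of the jet Hopf algebroid applied to $E(\alpha)$. This is where the ``Hopf algebroid on jets at the unit space'' referenced in the introduction does its work, and where one invokes that differentiable groupoid cohomology localizes to Lie algebroid cohomology via $E$.

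Concretely, the key steps in order are: (i) fix an idempotent $e\in\calA$ representing $\Ind_\textup{loc}(D)$, supported in a neighbourhood $U$ of $M$ that we are free to shrink; (ii) write out $\langle\chi(\alpha),\Ch(e)\rangle$ as an explicit finite sum of groupoid integrals involving $\alpha$, the traces of products of matrix entries of $e$, and the transverse density; (iii) perform a formal neighbourhood / jet expansion along $M$ of the integrand, using that all matrix entries of $e$ and $e-e^2$-type correction terms are supported in $U$, to rewrite each groupoid integral as an integral over $M$ against a polynomial in the jets of the $a_i$ — i.e.\ as the pairing of $\Ch(e)$ with the localized cyclic cocycle determined by $E(\alpha)$; (iv) identify this last expression with $\llangle E(\alpha),[e]_\textup{loc}\rrangle$ by the definition of the localized pairing; (v) check that shrinking $U$ does not change either side, so that the localized representative is legitimate. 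The compatibility of $E$ with differentials guarantees the comparison descends to cohomology classes, so the identity is independent of the cochain representative of $\alpha$ and of the choice of $e$.

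The main obstacle I expect is step (iii): controlling the jet expansion of the groupoid integral and showing that the higher-order terms in the transverse directions reorganize \emph{exactly} into the Connes--Moscovici characteristic map of the jet Hopf algebroid applied to $E(\alpha)$, rather than merely up to a coboundary whose boundary terms one would then have to chase. One has to be careful that the expansion is compatible with the cyclic structure (the $b$ and $B$ operators), that the $\sfG$-equivariance of the transverse density bundle $L$ is correctly matched with the $A$-equivariance of its Lie algebroid avatar, and that no contributions are lost from the part of $\sfG$ away from $U$ — which is exactly where the support condition on $e$, together with compactness of $M$, is used. A secondary technical point is the ``only morally a Hopf algebroid'' caveat from the introduction: one must verify that after localization to jets at $M$ the relevant coproducts and the characteristic map are genuinely well-defined (no completed-tensor-product pathologies survive), so that $\chi_\textup{Lie}$ and hence the right-hand side make rigorous sense.
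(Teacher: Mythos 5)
Your core idea --- represent $[\Ind(D)]_\textup{loc}$ by an idempotent $e$ supported in an arbitrarily small neighbourhood $U$ of the unit space, note that both sides pair cocycles against the Chern character of this single idempotent, and observe that only the behaviour of $\alpha$ near $M$ can enter --- is exactly the mechanism behind the paper's proof. Where you diverge, and where your plan has a genuine soft spot, is in how the right-hand side is interpreted. In the paper the localized pairing $\llangle\ ,\ \rrangle$ is \emph{defined} on the complex $\hat{C}^\bullet_\textup{diff}(\sfG)$ of germs of groupoid cochains around $M$ (which computes $H^\bullet_\textup{Lie}(A)$ by Proposition \ref{loc-lie}), using the very same formula \eqref{eq:chi-uni}: since $e$ is supported in $U$, the integral over $\{g_0\cdots g_k=1_x\}$ only sees the restriction of $\alpha$ to $U_k$, so the global pairing factors through germs, and the cochain-level van Est map is literally this restriction (composed, if one wishes, with the antisymmetrization $\Phi$). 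With that definition Theorem A is immediate; compatibility under shrinking $U$ is built into the direct/projective limit definitions, and no jet expansion is needed.

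Your step (iii), by contrast, asks for an \emph{exact} reorganization of the localized groupoid integral into a jet--Hopf-algebroid characteristic map applied to the Chevalley--Eilenberg representative $E(\alpha)$. That is both unnecessary and too strong: $\Phi$ is only a quasi-isomorphism, so on the nose the localized $\chi(\alpha)$ and a cocycle assembled from the antisymmetrized cochain $E(\alpha)$ will in general differ by coboundaries, and the requirement you impose of avoiding ``up to a coboundary'' cannot be met. Agreement up to coboundaries would in fact suffice, since $\Ch(e)$ is a cycle in the cyclic bicomplex, but proving even that comparison at the cochain level is essentially the content of the harder statements the paper develops later (the map $\mathfrak{X}$ and Proposition \ref{prop:frakX}), which are needed for Theorem B, not Theorem A. So either adopt the germ-level definition of the localized pairing, in which case your steps (ii)--(iv) collapse to the support observation and Theorem A follows at once, or be prepared to carry out the coboundary-level comparison you currently list only as an expected obstacle --- as written, that central step is not established.
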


We accomplish the computation of the localized index pairing entirely in Lie algebroid terms:
\begin{thmb}
\label{loc-index}
{\em 
Let $A\to M$ be an integrable Lie algebroid and $E$ a vector bundle over $M$.
An elliptic element $D\in\calU(A)\otimes\End(E)$ defines a canonical class $[\Ind(D)]_\textup{loc}\in K_0^\textup{loc}(\calA)$, and for $\alpha\in H^{2k}_{\rm Lie}(A;L)$ we have
\[
\llangle \alpha , [\Ind(D)]_\textup{loc} \rrangle=\frac{1}{(2\pi\sqrt{-1})^k}\int_{A^*}\pi^*\alpha\wedge\hat{A}(\pi^!A)\wedge\rho^*_{\pi^!A}{\rm ch}(\sigma(D)).
\]
}
\end{thmb}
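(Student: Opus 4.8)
The plan is to strip away the analytic input, localize the whole computation to the unit space $M$, and there recognize the answer as an instance of the algebraic index theorem for the Poisson manifold $A^*$, whose canonical deformation quantization is the Rees algebra of the universal enveloping algebra $\calU(A)$. First I would make $[\Ind(D)]_\textup{loc}$ explicit: ellipticity of $D\in\calU(A)\otimes\End(E)$ means that its principal symbol $\sigma(D)\in\symb_{\calC^\infty(M)}(A)\otimes\End(E)$, viewed as a fibrewise homogeneous, endomorphism-valued function on $A^*$, is invertible away from the zero section. Choosing a parametrix $Q$ for $D$ in a pseudodifferential completion of $\calU(A)$, so that $1-DQ$ and $1-QD$ lie in the smoothing ideal, the usual Connes--Moscovici recipe produces an idempotent $e(D)\in\Mat_N(\widetilde{\calA})\otimes\End(E)$. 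Since $D$ is a differential operator along the source fibres it is supported at $M$, and $Q$ can be taken supported in an arbitrarily small neighbourhood of $M$; hence $e(D)$ represents a class in $K_0^\textup{loc}(\calA)$, whose image in $K_0(\calA)$ is $\Ind(D)$ as recalled in the introduction. This defines $[\Ind(D)]_\textup{loc}$.

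The heart of the argument is to identify the localized pairing with a trace pairing on a deformation quantization of $A^*$. By the Poincar\'e--Birkhoff--Witt theorem for Lie algebroids, the Rees algebra of $\calU(A)$ with respect to the order filtration is a flat $\R[\hbar]$-deformation of $\symb_{\calC^\infty(M)}(A)$ --- the fibrewise-polynomial functions on $A^*$ --- with semiclassical bracket the linear Lie--Poisson structure $\pi_{A^*}$; a fibrewise completion turns this into a formal deformation quantization $\calA_\hbar$ of $\calC^\infty(A^*)$, which is exactly the algebra on which, after localization to $M$, the jet Hopf algebroid around the unit space acts. Unwinding the characteristic map on this jet level, the cyclic cocycle attached to $\alpha\in H^{2k}_\textup{Lie}(A;L)$ is identified with the cocycle on $\calA_\hbar$ obtained from the canonical trace of the quantization --- valued in densities on $A^*$, which are sections of $\pi^*L$ --- by twisting with $\pi^*\alpha$; this is precisely where the line bundle $L$ enters. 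Under the same identification the semiclassical reduction of $e(D)$ is the symbol idempotent built from $\sigma(D)$, so the principal symbol map on periodic cyclic homology sends $[\Ind(D)]_\textup{loc}$ to $\rho^*_{\pi^!A}\operatorname{ch}(\sigma(D))$. In sum, $\llangle\alpha,[\Ind(D)]_\textup{loc}\rrangle$ equals the pairing, in the sense of the algebraic index theorem, of the cyclic cocycle of $\pi^*\alpha$ with the $K$-theory class of the quantized idempotent in $\calA_\hbar$.

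It then remains to apply the algebraic index theorem for Poisson manifolds to $(A^*,\pi_{A^*})$: the pairing of the cocycle of a Poisson class $\beta$ with a quantized idempotent $e$ is $\tfrac{1}{(2\pi\sqrt{-1})^k}\int_{A^*}\beta\wedge\hat A\wedge\operatorname{ch}$, where the $\hat A$-factor is the $\hat A$-genus of the Lie algebroid controlling the symbol calculus --- here $\hat A(\pi^!A)$, which by Chern--Weil equals $\pi^*\operatorname{Td}(A\otimes\C)$ and recovers the Todd factor of the classical index formulas --- and the last factor is the Chern character computed in the previous step. Specializing $\beta=\pi^*\alpha$ and $e=e(D)$ gives the stated identity, the power of $2\pi\sqrt{-1}$ coming from the usual normalization of the symplectic-leafwise volume in the index theorem; morally this is the same computation one would run through Connes' tangent groupoid.

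The main obstacle is the middle step: reconciling the convolution-algebraic, jet-Hopf-algebroid definition of $\llangle\ ,\ \rrangle$ with the Fedosov/deformation-quantization picture on $A^*$ --- showing that the trace underlying the characteristic cocycle is the canonical trace of $\calA_\hbar$ with the correct density twist, controlling the topological tensor products and completions concealed in passing from $\calU(A)$ to $\calA$, and verifying that the parametrix idempotent stays supported near $M$. A secondary subtlety is that $A^*$ is essentially never a regular Poisson manifold, so one must invoke the Poisson (formality-based) algebraic index theorem rather than the symplectic Nest--Tsygan theorem, and carry the relevant identifications of Lie algebroids and their characteristic classes through the whole computation.
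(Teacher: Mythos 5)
Your construction of the localized class (parametrix supported near $M$, Connes--Moscovici idempotent) matches the paper, and your general philosophy---reduce the pairing to an algebraic index theorem for a deformation quantization of $A^*$ coming from the symbol calculus/PBW filtration of $\calU(A)$---is also the paper's. But the step you lean on at the crucial moment is the wrong one, and the paper explicitly warns against it. You propose to ``invoke the Poisson (formality-based) algebraic index theorem rather than the symplectic Nest--Tsygan theorem'' because $A^*$ is not regular, and you assert that its local index density is $\hat A(\pi^!A)$ paired against Lie algebroid classes. The formality-based theorems (Tamarkin--Tsygan, Cattaneo--Felder--Willwacher, Dolgushev--Rubtsov) do not produce this: applied to $A^*$ they are formulated in the ordinary de Rham complex of the manifold $A^*$ and yield the ordinary $\hat A$-genus of $A^*$, not the Lie algebroid complex $\Omega^\bullet_{\pi^!A}$ and $\hat A(\pi^!A)$; the paper's remark after Theorem \ref{thm:psi} points out these genuinely differ (already for $A=\g$ a Lie algebra, $\hat A(\pi^!\g)$ can be nontrivial while the de Rham-theoretic genus of the vector space $\g^*$ is trivial). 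Moreover, an arbitrary deformation quantization of $A^*$ does not see the groupoid geometry, so there is no direct interface between those theorems and the cyclic cocycles $\chi(\alpha)$ built from localized groupoid cochains with $L$-coefficients. So as written, the key identity you need is not a citable theorem, and the class equality $\hat A(\pi^!A)=\pi^*\mathrm{Td}(A\otimes\C)$ (true as Lie algebroid Chern--Weil classes) does not bridge the gap.

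The paper's route is different precisely here: instead of working on the singular Poisson manifold $A^*$, it lifts everything to $T^*_t\sfG$, which \emph{is} regular (symplectic foliation by the cotangent bundles of the $t$-fibers), proves a $\sfG$-invariant Fedosov/trace-density algebraic index theorem there (adapting \cite{ppt}, in the spirit of Nest--Tsygan's families theorem), and descends along the quotient map $R:T^*_t\sfG\to A^*$; Lemma \ref{id-lie}, identifying $\calF|_{A^*}\cong\pi^!A$, is exactly what makes $\hat A(\pi^!A)$ and $H^\bullet_{\rm Lie}(\pi^!A)$ appear. The remaining ingredients you flag as ``the main obstacle'' are in fact the core of the argument and are nontrivial: the $\sfG$-equivariant equivalence of the analytic star product with an invariant Fedosov product with vanishing characteristic class (via homogeneity with respect to the Euler vector field), the trace comparison $\tau_\Omega\circ\calE_{A^*}=\tau_\Omega=\tr_\Omega$, and the cochain-level identification of the localized characteristic map with the cup product against the trace density (Propositions \ref{comp-trace} and \ref{prop:frakX}), plus the observation that these arguments survive without unimodularity when one works with $L$-coefficients. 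Without replacing your appeal to the formality-based Poisson index theorem by an argument of this invariant, regularized type (or by genuinely reproving a Poisson index theorem adapted to the groupoid quantization), the proposal does not close.
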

Here, in the topological expression on the right hand side, we use $\pi^!A$, the pull-back Lie algebroid of $A$ along the projection $\pi:A^*\to M$. The individual terms are certain Lie algebroid classes, analogous to the usual $\hat{A}$-genus and the Chern character, and they combine to a compactly supported volume form over $A^*$, which can be integrated. When $A=TM$, the tangent bundle, we find exactly Connes-Moscovici's localized index theorem \cite{conmos}. 

We prove Theorem B using the $\sfG$-pseudodifferential calculus of \cite{montpier} and \cite{nwx}. Its associated symbol calculus defines a quantization of the canonical Lie--Poisson bracket on $A^*$. The index pairing is therefore computed by an algebraic index theorem for formal deformation quantizations of the Poisson manifold $A^*$. Unfortunately, we can not simply apply the algebraic index theorem for Poisson manifolds \cite{tt,cfw,dr}, since an arbitrary formal deformation quantization does not reflect the geometry of the groupoid $\sfG$. We therefore adapt our earlier work \cite{ppt} to prove a $\sfG$-invariant algebraic index theorem on a regular Poisson manifold over $A^*$, which generalizes the foliation index theorem by Nest and Tsygan \cite{nt:family}.

This paper is organized as follows:
We recall in Section \ref{sec:char} the definition of differentiable groupoid cohomology and introduce the characteristic class map from the 
differentiable groupoid cohomology to the cyclic cohomology of the smooth groupoid algebra $\calA$. In Section \ref{sec:localization}, we 
define the localized K-homology of the groupoid $\sfG$, the Lie algebroid cohomology of $A$, and the localized index pairing. Via the Van 
Est map, we relate the index pairing between K-theory and differentiable cohomology of $\sfG$ and the localized index pairing between the 
localized K-theory and the Lie algebroid cohomology of $\sfG$. In Section \ref{sec:local-pairing}, we define the localized index of an elliptic 
$\sfG$ differential operator and introduce the asymptotic $\sfG$-pseudodifferential calculus to compute the localized index. The algebraic 
index theorem of deformation quantization of $A^*$ was developed in \ref{sec:formality}. In Section \ref{sec:local-index}, we prove the main 
theorem of the paper in the case of a unimodular groupoid. The extension of the proof to the general Hausdorff groupoid is explained in 
Section \ref{sec:nonunimodular}.  In Section \ref{sec:example}, we explain our theorem in two examples, the pair groupoid and the foliation 
groupoid. \\

\noindent{\bf Acknowledgments:} We would like to thank Paulo Carrillo-Rouse for inspiring discussion of index formula for elliptic $\sfG$ 
pseudodifferential operators. Pflaum is partially supported by NSF grant DMS 1105670, and 
Tang is partially supported by NSF grant DMS 0900985. Tang acknowledges support from the NWO-cluster ``GQT'' for a visit to the University of Amsterdam. Both X.T. and H.P. thank Marius Crainic for a very helpful discussion during that visit.

\section{The characteristic map and a global pairing}
\label{sec:char}

In this section we construct the basic map underlying the definition of localized indices of elliptic operators. Throughout, 
$\sfG\rightrightarrows M$ denotes a Lie groupoid with source and target map denoted by $s,t:\sfG \to M$, and the unit map denoted by $u :M \rightarrow \sfG$. 
We denote an arrow $g\in\sfG$ 
with $s(g)=x$ and $t(g)=y$ as $x\stackrel{g}{\rightarrow} y$, so that the 
multiplication $g_1g_2$ is defined when $t(g_1)=s(g_2)$. 

The Lie algebroid of $\sfG$ is  the vector bundle $u^* (\ker Tt)$ over $M$ and is denoted by $A$. 
Its space of smooth sections $\Gamma^\infty(M,A)$ 
carries a Lie algebra structure $[~,~]$, together with an anchor map $\rho:A\to TM$ satisfying
\begin{align*}
\rho([X,Y])&=[\rho(X),\rho(Y)],\\
[X,fY]&=f[X,Y]+\rho(X)(f)\cdot Y, 
\end{align*}
for $X,Y\in \Gamma^\infty (M,A)$, and $f \in \calC^\infty (M)$. 

\subsection{Differentiable cohomology}
\label{diff-coh}
The classifying space $B\sfG$ of $\sfG$ can be realized as a simplicial 
manifold by defining $\sfG_k$, $k\geq 0$ to be the manifold of $k$-tuples of composable arrows:
\[
 \sfG_k:=\{(g_1,\ldots,g_k) \in \sfG^k \mid t(g_i)=s(g_{i+1}) \text{ for $i=1,\ldots, k-1$}\},
\]
with the convention that $\sfG_1:=\sfG$ and $\sfG_0:=M$. 
The face operators $\partial_i:\sfG_k\to\sfG_{k-1}$ are given by
\begin{equation}
\label{simplicial}
\partial_i(g_1,\ldots,g_k)=\begin{cases} (g_2,\ldots,g_k), &i=0,\\
(g_1,\ldots,g_ig_{i+1},\ldots,g_k), &1\leq i\leq k-1,\\
(g_1,\ldots,g_{k-1}), &i=k,\end{cases}
\end{equation}
and for $k=1$, we put $\partial_0:=s$ and $\partial_1:=t$.

This simplicial structure induces a differential $d$ on the graded vector space 
$C^\bullet_\textup{diff} (\sfG;\C) := \bigoplus_{k\geq 0} C^k(\sfG;\C)$, where $C^k(\sfG;\C):=\calC^\infty(\sfG_k,\C)$.
More precisely, $d$ is  given by
\begin{equation}
\label{groupoid-diff}
\begin{split}
d\varphi(g_1,&\ldots,g_{k+1}):=\varphi(g_2,\ldots,g_{k+1})\\
&+\sum_{i=1}^k(-1)^k\varphi(g_1,\ldots,g_ig_{i+1},\ldots,g_k)+(-1)^{k+1}\varphi(g_1,\ldots,g_k).
\end{split}
\end{equation}
The cohomology of this complex is called the {\em differentiable groupoid cohomology}, 
denoted $H^\bullet_\textup{diff}(\sfG;\C)$.

For our considerations, it is important that there is also a cyclic structure on 
$\big(\sfG_k\big)_{k\geq 0}$
 given by 
\[
t_k(g_1,\ldots,g_k):=\left( (g_1g_2\cdots g_k)^{-1},g_1,\ldots,g_{k-1}\right),
\]
with which the classifying space is even a cyclic manifold. (Cyclic objects can be defined in arbitrary categories, cf.~\cite{loday}.)
On the level of smooth functions, this gives $\big( C^k (\sfG;\C) \big)_{k\geq 0}$ 
the structure of a cocyclic vector space by pull-back:
\[
\tau_k\varphi(g_1,\ldots,g_k):=\varphi\left((g_1g_2\cdots g_k)^{-1},g_1,\ldots,g_{k-1}\right),
\]
which turns the graded vector space $C^\bullet_\textup{diff}(\sfG;\C)$ into a cocyclic module. Let ${\mathcal {B}} C^\bullet_{\textup{diff}}(\sfG; \C)$ be the associated mixed complex. The resulting cyclic cohomology, the cohomology of $ {\rm Tot}^{\bullet}{\mathcal {B}}C_\textup{diff}(\sfG;\C)$, is easily 
computed to be given by
\[
HC^\bullet_\textup{diff}(\sfG):=\bigoplus_{i\geq 0}H^{\bullet+2i}_\textup{diff}(\sfG;\C).
\]

\subsection{Unimodularity and a trace}
\label{sec:unitr}
Define the following line bundle of ``transversal densities'' 
$L:=\bigwedge^\textup{top} T^*M\otimes\bigwedge^\textup{top}A$. It was observed in \cite{elw} that 
$L$ carries a canonical representation of $\sfG$, even though the Lie algebroid $A$ and the tangent 
bundle $TM$ do not carry such a representation. 
Following this paper, we will recall the notion of {\em unimodularity} of the groupoid $\sfG$.

Assume that $L$ is trivial as a line bundle and therefore can be trivialized by choosing a ``volume form'' $\Omega\in\Gamma^\infty(M,L)$.
\begin{definition}
A Lie groupoid $\sfG$ is {\em unimodular} if there exists an invariant  global nonvanishing section $\Omega$ of $L$.
\end{definition}
The assumption of unimodularity has a cohomological origin. To see this, choose an arbitrary section $\Omega$ of $L$. For such a choice, 
there exists a smooth nonvanishing function $\delta_\sfG^\Omega\in \calC^\infty(G)$ such that
\[
  g(\Omega_{s(g)})=\delta_\sfG^\Omega(g)\Omega_{t(g)}.
\]
One easily checks that $\delta^\Omega_\sfG(g_1g_2)=\delta_{\sfG}^\Omega(g_1)\delta_\sfG^\Omega(g_2)$ for $(g_1,g_2)\in\sfG_2$, so that $\log(\delta_\sfG^\Omega)$ defines an element in $H^1_\textup{diff}(\sfG;\R)$, which is easily checked to be independent of the choice of $\Omega$. If 
this class $[\log\delta] \in H^1_\textup{diff}(\sfG;\R)$ is zero, it means that there exists a smooth function $\eta^\Omega\in \calC^\infty(M,\R_+)$ such that 
\[
  \delta_\sfG^\Omega(g)=\eta^\Omega(s(g))\eta^\Omega(t(g))^{-1}.
\]
From this, it follows that the product $(\eta^\Omega)^{-1}\Omega$ is invariant.

The {\em convolution algebra} $\calA$ of $\sfG$ is given by the space
 $\Gamma_\textup{c}^\infty\left( \sfG,s^*{\bigwedge}^\textup{top} A^* \right)$ 
of longitudinal densities equipped with the product
\begin{equation}
\label{convolution}
  (f_1*f_2)(g):=\int_{\sfG_{t(g)}} f_1(gh^{-1}) f_2(h) \, .
\end{equation}
This formula needs to be interpreted as follows. First note that for $x\in M$ the symbol
$\sfG_x$ denotes the submanifold of $\sfG$ consisting of all arrows having target $x$. 
Then observe that ${f_2}_{|\sfG_x}$ is a volume form on $\sfG_x$, since for $h\in \sfG_x$:
\[
  {s^*A^*}_h \cong \big( \ker T_h t\big)^* = T^*_h \sfG_x \, . 
\]
Moreover, $f_1 (gh^{-1}) \in \left({\bigwedge}^\textup{top} A^*\right)_{ s(g)}$ for all 
$h\in \sfG_{t(g)}$, 
hence the integral in \eqref{convolution} is well-defined, indeed. 
\begin{remark}
\label{E-valued-conv}
Note that for a $\sfG$-vector bundle $E\rightarrow M$, one can even define an $\End (E)$-valued 
convolution algebra whose underlying space is given by  
$\Gamma_\textup{c}^\infty\left( \sfG,s^*({\bigwedge}^\textup{top} A^* \otimes \End(E) \right)$ 
and whose product $*$ is given  by the formula \eqref{convolution},  provided the pointwise product also involves composition of endomorphisms. We may write $\calA(E)$ for this algebra, but most of the time in this paper we suppress $E$ from the notation and simply write $\calA$.  
\end{remark}
Given an invariant volume form $\Omega\in\Gamma^\infty(M,L)$, consider the following 
functional on $\calA$: 
\begin{equation}
\label{trace}
 \tau_\Omega:\calA \rightarrow \C , \quad  \tau_\Omega(f):=\int_M f \lrcorner \, \Omega ,
\end{equation}
where $ f \lrcorner \, \Omega $ denotes the contraction of $\Omega$ with $f$. Over the 
footpoint $x\in M$ this contraction is defined by 
\[
   f \lrcorner \, \Omega (x) := \sum \langle f(u(x)) , \Omega_{(2)} (x) \rangle \, 
   \Omega_{(1)} (x)  ,
\]
where, using a Sweedler kind of notation, 
$\Omega(x) = \sum \Omega_{(1)} (x) \otimes \Omega_{(2)} (x)$ with  
$\Omega_{(1)} (x) \in {\bigwedge}^\textup{top} T_x^* M$, and 
$\Omega_{(2)} (x) \in {\bigwedge}^\textup{top} A_x$.
\begin{proposition}
$\tau_\Omega$ defines a trace on the convolution algebra.
\end{proposition}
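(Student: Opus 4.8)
The plan is to express each of $\tau_\Omega(f_1 * f_2)$ and $\tau_\Omega(f_2 * f_1)$ as the integral over $\sfG$ of a canonically defined compactly supported density, and then to identify the two using the inversion diffeomorphism $\iota\colon \sfG\to\sfG$, $g\mapsto g^{-1}$. The basic tool is the canonical isomorphism of line bundles over $\sfG$
\[
 {\textstyle\bigwedge}^\textup{top}T^*\sfG \;\cong\; s^*{\textstyle\bigwedge}^\textup{top}A^*\,\otimes\,t^*{\textstyle\bigwedge}^\textup{top}T^*M ,
\]
coming from the splitting $\bigwedge^\textup{top}T^*_g\sfG=\bigwedge^\textup{top}(\ker T_gt)^*\otimes\bigwedge^\textup{top}(T_g\sfG/\ker T_gt)^*$ together with the identifications $\ker T_gt\cong A_{s(g)}$ (by right translation) and $T_g\sfG/\ker T_gt\cong T_{t(g)}M$ (by $T_gt$); there is an entirely analogous isomorphism with the roles of $s$ and $t$ interchanged. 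It follows that for any $\phi\in\Gamma_\textup{c}^\infty\big(\sfG,s^*\bigwedge^\textup{top}A^*\otimes t^*\bigwedge^\textup{top}A^*\big)$ the contractions $\phi\lrcorner\, t^*\Omega$ and $\phi\lrcorner\, s^*\Omega$ — pairing the appropriate $\bigwedge^\textup{top}A^*$-factor of $\phi$ against the $\bigwedge^\textup{top}A$-factor of $\Omega$ and densifying what remains via the isomorphisms above — are well-defined compactly supported densities on $\sfG$.

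First I would check, by unwinding the definition \eqref{convolution} of the convolution product and the definition \eqref{trace} of $\tau_\Omega$, evaluating $f_1*f_2$ at the units, and applying Fubini to the submersion $t\colon\sfG\to M$ with fibres $\sfG_x$, that
\[
 \tau_\Omega(f_1 * f_2)=\int_\sfG \phi_{12}\lrcorner\, t^*\Omega , \qquad \phi_{12}(g):=f_2(g)\otimes f_1(g^{-1}) ,
\]
and, symmetrically, $\tau_\Omega(f_2 * f_1)=\int_\sfG \phi_{21}\lrcorner\, t^*\Omega$ with $\phi_{21}(g):=f_1(g)\otimes f_2(g^{-1})$; here $f_2(g)\in\bigwedge^\textup{top}A^*_{s(g)}$ plays the role of the longitudinal density integrated over a fibre, whereas $f_1(g^{-1})\in\bigwedge^\textup{top}A^*_{s(g^{-1})}=\bigwedge^\textup{top}A^*_{t(g)}$ provides the transverse factor. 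Next, since $s\circ\iota=t$, $t\circ\iota=s$, and $\iota$ carries $\ker Tt$ to $\ker Ts$ compatibly with right and left translation (so that it intertwines the two splittings of $\bigwedge^\textup{top}T^*\sfG$), the pullback $\iota^*$ sends the density $\phi_{21}\lrcorner\, t^*\Omega$ to $\phi_{12}\lrcorner\, s^*\Omega$ — after the evident reordering of the tensor factors of $\phi_{21}(g^{-1})=f_1(g^{-1})\otimes f_2(g)$. As $\int_\sfG\iota^*\mu=\int_\sfG\mu$ for every compactly supported density $\mu$, this yields
\[
 \tau_\Omega(f_2 * f_1)=\int_\sfG \phi_{12}\lrcorner\, s^*\Omega .
\]
Everything therefore reduces to the single identity $\int_\sfG \phi\lrcorner\, t^*\Omega=\int_\sfG \phi\lrcorner\, s^*\Omega$, and this is precisely where unimodularity enters: the two densities coincide pointwise if and only if $t^*\Omega=s^*\Omega$ under the canonical isomorphism $s^*L\cong t^*L$ of \cite{elw}, i.e.\ if and only if $\Omega$ is $\sfG$-invariant.

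The only genuine obstacle is this last compatibility statement, namely that the isomorphism $s^*L\cong t^*L$ realizing the $\sfG$-action on $L$ intertwines the two densifications of $\phi$ associated with the two splittings of $\bigwedge^\textup{top}T^*\sfG$. This is in essence a reformulation of the construction of that action in \cite{elw}: both densifications are assembled from the same data $TR_g$, $TL_g$, $T_gs$, $T_gt$, so once all the identifications are written out the equality becomes formal, and since we work throughout with densities rather than oriented volume forms no sign or orientation issues intervene. Finally, the $\End(E)$-valued version of Remark~\ref{E-valued-conv} is treated in precisely the same manner, inserting the fibrewise trace $\tr\colon\End(E)\to\C$ into all of the integrals above and using $\tr(ab)=\tr(ba)$ to transpose the two endomorphism-valued factors.
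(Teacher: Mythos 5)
Your proposal is correct and amounts to a careful, coordinate-free version of exactly the ``straightforward computation'' the paper has in mind: rewrite $\tau_\Omega(f_1*f_2)$ as an integral over $\sfG$ of a density via the submersion $t$, substitute $g\mapsto g^{-1}$, and use the $\sfG$-invariance of $\Omega$ (equivalently, the compatibility of the two splittings of $\bigwedge^{\textup{top}}T^*\sfG$ with the Evens--Lu--Weinstein action on $L$) to identify the result with $\tau_\Omega(f_2*f_1)$. The compatibility you flag as the only real issue is indeed just the construction of that action, so no gap remains.
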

\begin{proof}
This is a straightforward computation.
\end{proof}

From now on we assume the groupoid $\sfG$ to be unimodular, so that we can use this trace. Only in section \ref{sec:nonunimodular} we shall remove this assumption of unimodularity and explain how the index theory works for general Lie groupoids.

\subsection{The characteristic map to cyclic cohomology}
\label{char-map}
We now consider the cyclic cohomology of the convolution algebra $\calA$ associated to $\sfG$. 
It was proved in \cite{cm} that $\calA$ is an $H$-unital algebra, so its cyclic cohomology
can be defined using the standard complexes. These result from a canonical cocyclic module 
structure on the graded vector space $X^\bullet(\calA):=\Hom_\textup{b} \left(\calA^{\otimes(\bullet+1)},\C\right)$, 
as defined in \cite{loday}, where here we use the completed inductive tensor product and consider only 
bounded functionals.

Using the trace of Proposition \ref{trace}, consider the characteristic map $\chi_\Omega:C^k_\textup{diff}(\sfG;\C)\to X^k(\calA)$ given by
\begin{equation}\label{eq:chi-uni}
\begin{split}
\chi_\Omega(\varphi_1&\otimes\ldots\otimes \varphi_k)(a_0\otimes\ldots\otimes a_k)\\
&= \tau_\Omega\left(a_0*(\varphi_1\cdot a_1)*\ldots*(\varphi_k\cdot a_k)\right)\\
&=\int_M\left(\int_{g_0\cdot \ldots \cdot g_k=1_x}\hspace{-1cm}a_0(g_0)\varphi_1(g_1)a_1(g_1)\cdots \varphi_k(g_k) a_k(g_k)\right)\Omega(x),
\end{split}
\end{equation}
where we have written $\varphi=\varphi_1\otimes\ldots\otimes\varphi_k$. This is for notational purposes only, clearly the map above is well-defined $\varphi\in C^k_{\rm diff}(\sfG)$.
\begin{proposition}\label{prop:char-map}
The characteristic map is a morphism of cocyclic modules.
\end{proposition}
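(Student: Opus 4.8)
The plan is to verify directly that $\chi_\Omega$ commutes with the cofaces $\delta^i$, the codegeneracies $\sigma^i$, and the cyclic operator $\tau_k$ of the two cocyclic modules $C^\bullet_\textup{diff}(\sfG;\C)$ and $X^\bullet(\calA)$. Since a cocyclic module is generated by these operators (subject to the cocyclic relations, which already hold on both sides), it suffices to check compatibility with each of them. The key observation that makes everything work is that the formula \eqref{eq:chi-uni} expresses $\chi_\Omega(\varphi)(a_0\otimes\cdots\otimes a_k)$ as the trace $\tau_\Omega$ of an alternating convolution product $a_0*(\varphi_1\cdot a_1)*\cdots*(\varphi_k\cdot a_k)$, so the combinatorics of the cofaces on the groupoid side (multiplying adjacent arrows, dropping the first or last) will match the combinatorics of the algebra-side cofaces (which multiply adjacent tensor factors or act by the trace), once we use that pointwise multiplication by a function $\varphi\in\calC^\infty(\sfG)$ is compatible with convolution in the appropriate sense.

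First I would treat the cofaces $\delta^i$ for $1\le i\le k$. On the groupoid side $\delta^i$ is dual to the face map $\partial_i$, which replaces the pair $(g_i,g_{i+1})$ by the product $g_ig_{i+1}$; concretely $(\delta^i\varphi)(g_1,\dots,g_{k+1})$ involves $\varphi(g_1,\dots,g_ig_{i+1},\dots,g_{k+1})$. On the algebra side $\delta^i$ multiplies $a_{i-1}$ and $a_i$ (or the appropriate adjacent pair). Plugging into \eqref{eq:chi-uni}, one uses that for functions on $\sfG$ one has the identity $(\varphi_1\cdot a_1)*(\varphi_2\cdot a_2)$ relates to $(\varphi\text{ evaluated on the product arrow})\cdot(a_1*a_2)$ because in the convolution integral the relevant arrow that $\varphi$ sees, after the substitution forced by the constraint $g_0\cdots g_k=1_x$, is precisely the product. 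For $\delta^0$ and $\delta^{k+1}$ one similarly matches the "drop the outer arrow" operations with the trace property and the unit, again using that the constraint $g_0\cdots g_{k}=1_x$ lets one absorb or release the extremal factor. The codegeneracies $\sigma^i$ insert a unit, which on the groupoid side corresponds to inserting $1\in\calC^\infty(\sfG)$ as a $\varphi_i$ (using the convention that the differentiable complex has $C^0(\sfG;\C)=\calC^\infty(M)$ feeding in through pullbacks along units), and on the algebra side to inserting a copy of a unit — this is the most routine check.

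The cyclic operator is where the argument has real content, and I expect it to be the main obstacle. On the groupoid side $\tau_k\varphi(g_1,\dots,g_k)=\varphi\big((g_1\cdots g_k)^{-1},g_1,\dots,g_{k-1}\big)$, which crucially involves the inverse of the total product. On the algebra side the cyclic operator on $X^\bullet(\calA)$ permutes $a_0\otimes\cdots\otimes a_k$ cyclically, and matching the two requires precisely the trace property of $\tau_\Omega$ established in Proposition~\ref{trace}: one rewrites $\tau_\Omega\big(a_0*(\varphi_1\cdot a_1)*\cdots*(\varphi_k\cdot a_k)\big)$ by cyclically rotating the last factor to the front under the trace, and then one must check that the function reorganization bookkeeping — in particular how $\varphi$ gets evaluated on $(g_1\cdots g_k)^{-1}$ once the arrow $g_0$ in the constraint $g_0g_1\cdots g_k=1_x$ is identified with $(g_1\cdots g_k)^{-1}$ — is exactly encoded by $\tau_k$. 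The unimodularity (invariance of $\Omega$) is what guarantees $\tau_\Omega$ is genuinely a trace and hence that this rotation is legitimate; without it the cyclic compatibility would fail, which is consistent with the role of the twisting class $[\log\delta]\in H^1_\textup{diff}(\sfG;\R)$ discussed above. Having checked all the coface, codegeneracy, and cyclic compatibilities, the conclusion that $\chi_\Omega$ is a morphism of cocyclic modules is immediate.
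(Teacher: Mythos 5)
Your proposal is correct and takes essentially the same route as the paper, which simply declares the verification of the cocyclic compatibilities a straightforward computation: you check the cofaces, codegeneracies, and cyclic operator directly against formula \eqref{eq:chi-uni}, and you correctly isolate the only point with content, namely that the constraint $g_0g_1\cdots g_k=1_x$ identifies $g_0$ with $(g_1\cdots g_k)^{-1}$ and that the cyclic rotation is absorbed by the trace property of $\tau_\Omega$, i.e.\ by the invariance of $\Omega$ (unimodularity). Nothing further is needed beyond writing out these substitutions explicitly.
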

\begin{proof}
Again, given the cocyclic structure on both sides, this is a straightforward computation.  
\end{proof}
On the level of cohomology, this leads to a map
\[
HC^\bullet_\textup{diff}(\sfG;\C)\to HC^\bullet(\calA).
\]
Using the Connes--Chern character $\Ch :K_0(\calA)\to HC_{\rm ev}(\calA)$, cf. \cite{loday}, together with the canonical pairing between cyclic cohomology and homology, we obtain a pairing
\begin{equation}
\label{global-pairing}
\left<~,~\right> : HC^{\rm ev}_\textup{diff}(\sfG;\C)\times K_0(\calA)\to\C.
\end{equation}
explicitly given by 
\[
\left<[\varphi],[e]\right>:=\sum_{i=0}^k (-1)^i\frac{(2i)!}{i!}\chi_\omega(\varphi_{2i})\left(\left(e-\frac{1}{2}\right)\otimes e\otimes\ldots\otimes e\right),
\]
where $\varphi=(\varphi_{2k},\ldots,\varphi_0)\in {\rm Tot}^{2k}{\mathcal {B}} C_\textup{diff}(\sfG;\C)$ and $e \in \Mat_\infty (\calA)$ 
is a projection.
\section{Localization}
\label{sec:localization}

\subsection{Localized K-theory}
In \cite{mw}, Moscovici and Wu defined a localized version of $K$-theory for a smooth manifold $M$, using the algebra of smoothing operators. This algebra of smoothing operators can be interpreted as the convolution algebra of the pair groupoid $M\times M\rightrightarrows M$. From this point of view, the definition of localized $K$-theory can naturally be generalised to the setting of arbitrary Lie groupoids as follows: for any open neighbourhood $U\subset \sfG$ of the unit space $M$, denote by $\calA(U)$ the elements in $\calA$ with support in $U$. Of course, $\calA(U)$ is not a subalgebra of $\calA$, since we have
\[
\calA(U)*\calA(U)\subset\calA\left(U_\partial\right)
\]
with $U_\partial:=\partial_1\left(\partial_0^{-1}(U)\cap \partial_2^{-1}(U)\right)$, cf.~Eq.~\eqref{simplicial}. 
Nevertheless, we can define
\[
\begin{split}
K_0(\calA(U)):=\big\{(P,e)\in &\Mat_\infty\left(\calA(U)^+\right)\times \Mat_\infty(\C)) \mid P^2=P=P^*,\\
 & e^2=e=e^*, \text{ and } P-e\in \Mat_\infty\left(\calA(U)\right)\big\}\big/ \sim.
\end{split}
\]
Here, $(P_1,e_1)\sim (P_2,e_2)$, if there exist  continuous piecewise $\calC^1$-paths 
$\varrho : [0,1] \rightarrow \Mat_\infty\left(\calA(U)^+\right)$ and 
$\eta : [0,1] \rightarrow \Mat_\infty\left( \C \right)$ of projectors in $\Mat_\infty\left(\calA(U)^+\right)$
resp.~$\Mat_\infty\left( \C \right)$ such that $\varrho (i) = P_i$ and $\eta (i) = e_i$ for $i=0,1$.

Clearly, an inclusion of open sets $V\subset U$ leads to a map $K_0(\calA(V))\to K_0(\calA(U))$, and we 
define the {\em localized $K$-theory} as the projective limit
\begin{equation}
\label{def-loc-k}
K_0^\textup{loc}(\calA):=\lim_{\longleftarrow\atop U\supset M} K_0(\calA(\calU)).
\end{equation}
By definition, this abelian group comes equipped with a canonical map
\begin{equation}
\label{forgetful}
K_0^\textup{loc}(\calA)\to K_0(\calA),\quad [P]_\textup{loc}\mapsto [P],
\end{equation}
which forgets about the support of projectors. 
\subsection{Lie algebroid cohomology}
Next, we localize the differentiable groupoid cochain complex to the unit space. We do this as follows: 
for any open neighbourhood $U$ of $M$ in $\sfG$, we define inductively
\begin{equation}
\label{ind-nghbd}
U_1 := U, \qquad U_k:=\bigcap_{i=0}^k\partial_i^{-1}(U_{k-1}),\quad k\geq 2.
\end{equation}
Obviously, $U_k$  is an open neighbourhood of the unit space $M$ in $G_k$. 
For $k=0$, we simply put $U_0=M$. Put $C^k_\textup{diff}(\sfG, U;\Bbbk):=\calC^\infty(U_k,\Bbbk)$ for
$\Bbbk =\R,\C$. 
The definition of the neighbourhoods $U_k$ has been designed to ensure that the differentiable 
groupoid cohomology differential restricts to the cochains on $U_\bullet$, provided we define
\[
df:=(s^*f-t^*f)|_U,
\] 
in degree zero. This defines a cochain complex $(C^\bullet_\textup{diff}(\sfG,U;\Bbbk),d)$ for any open neighbourhood $U$ of $M$ in $\sfG$. 
Clearly, for an inclusion $V\subset U$ of such neighbourhoods we have a canonical restriction morphism of complexes 
$C^\bullet_\textup{diff}(\sfG,U;\Bbbk)\to C^\bullet_\textup{diff}(\sfG,U;\Bbbk)$. As $U$ gets smaller and smaller around $M$, the $U_k$ 
localize around $M$ inside $\sfG_k$. We therefore see that the direct limit
\begin{equation}
\label{loc-cochain}
\hat{C}^k_\textup{diff}(\sfG;\Bbbk):=\lim_{\longrightarrow\atop U\supset M}C^k_\textup{diff}(\sfG,U;\Bbbk)
\end{equation}
consists of germs of smooth functions on $\sfG_k$ around $M$. By construction, the differentiable  groupoid differential $d$ induces 
a differential on $\hat{C}^k_\textup{diff}(\sfG;\Bbbk)$, denoted $\hat{d}$. 

Recall that {\em Lie algebroid cohomology} $H^\bullet_{\rm Lie}(A;\C)$ is defined as the cohomology of the complex $\Omega^\bullet(A)=\Gamma^\infty(M,\bigwedge^\bullet A^*)$ 
equipped with the de Rham-type differential $d:\Omega^k_\textup{Lie}(A)\to\Omega_{\rm Lie}^{k+1}(A)$
\begin{equation}
\label{lie-alg-diff}
\begin{split}
dc(X_1,\ldots,X_{k+1}):=&\sum_i(-1)^i\rho(X_i)c(X_1,\ldots,\hat{X}_i,\ldots,X_{k+1})\\
&\sum_{i<j}(-1)^{i+j+1}c([X_i,X_j],X_1,\ldots,\hat{X}_i,\ldots,\hat{X}_j,\ldots X_k).
\end{split}
\end{equation}

Now, consider the map $\Phi:\hat{C}^k_\textup{diff}(\sfG;\Bbbk)\to \Omega^k_\textup{Lie}(A,\Bbbk)$ defined by
\begin{equation}
\label{eq:q.i.-lie}
\Phi(\hat{\varphi}(X_1,\ldots,X_k):=\sum_{\sigma\in S_k}(-1)^\sigma R_{X_{\sigma(1)}}\ldots R_{X_{\sigma(k)}}\hat{\varphi},
\end{equation}
where, for $X\in\Gamma(M,A)$, $R_X\hat{\varphi}\in \hat{C}^{k-1}_\textup{diff}(\sfG;\C)$ is defined by
\[
R_X\hat{\varphi}(g_2,\ldots,g_{k}):=\frac{\partial \hat{\varphi}(-,g_2,\ldots,g_k)}{\partial X}(s(g_2)).
\]
Here $X$ is lifted to an invariant vector field on $s^{-1}(g_2)$ along which the (germ of the) function $\hat{\varphi}(-,g_2,\ldots,g_k)$ is differentiated.
\begin{proposition}
\label{loc-lie}
The map $\Phi$ defines a quasi-isomorphism of cochain complexes:
\[
H^\bullet\left(\hat{C}^k_\textup{diff}(\sfG;\Bbbk),\hat{d}\right)\cong H^\bullet_\textup{Lie}(A;\Bbbk).
\]
\end{proposition}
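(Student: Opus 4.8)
The plan is to show that $\Phi$ is a morphism of complexes which is moreover a quasi-isomorphism, by reducing to a local/infinitesimal statement near the unit space.

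First I would check that $\Phi$ intertwines the differentials $\hat d$ and the Lie algebroid differential \eqref{lie-alg-diff}. The key computational input is understanding how the operators $R_X$ interact with the face maps $\partial_i$ appearing in $\hat d$. Differentiating the simplicial identities \eqref{simplicial} at the unit: $R_X$ applied to $\partial_0^*$ and $\partial_k^*$ produces (via the definition, using that $X$ is lifted to a right-invariant vector field along $s$-fibers) the anchor term $\rho(X)$ acting on the lower cochain, while $R_X$ applied to the interior face $\partial_i^*$ for $1\le i\le k-1$ contributes a commutator term. The antisymmetrization over $S_k$ in \eqref{eq:q.i.-lie} then reorganizes these contributions precisely into the two sums of \eqref{lie-alg-diff}; the bracket $[X_i,X_j]$ emerges from the non-commutativity of the right-invariant lifts, i.e.\ from $R_{X}R_{Y}-R_{Y}R_{X}$ evaluated against the interior multiplication face. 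This is a bookkeeping computation of the same flavour as Propositions \ref{prop:char-map} and the verification that $\tau_\Omega$ is a trace, so I would assert it follows from a ``straightforward computation'' after setting up the right-invariant lift carefully.

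Next, to prove $\Phi$ is a quasi-isomorphism, I would exhibit an explicit quasi-inverse or, more efficiently, filter both complexes by the order of vanishing along $M$ and compare associated graded pieces. Concretely, a germ of a smooth function on $\sfG_k$ near $M=\sfG_0$ has a Taylor expansion in the fiber directions of the map $\sfG_k \to M$ (sending a composable string to, say, its common base point under the unit inclusion's normal directions). The ``normal bundle'' of $M$ inside $\sfG_k$ is canonically $A^{\oplus k}$ (each factor is $u^*\ker Tt = A$), so the associated graded of $\hat C^k_\textup{diff}$ is $\Gamma^\infty\bigl(M, \operatorname{Sym}(A^*)^{\otimes k}\bigr)$ — a bar-type complex whose cohomology, by a Poincar\'e-lemma/homotopy-contraction argument in each symmetric-algebra factor (the classical fact that the reduced bar complex of a polynomial algebra is acyclic except in the ``linear'' part), collapses onto $\bigwedge^k A^*$. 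Under this identification the induced map from $\Phi$ on associated graded is precisely the antisymmetrization projection $\operatorname{Sym}(A^*)^{\otimes k}\to \bigwedge^k A^*$, which is a quasi-isomorphism. A spectral sequence (or direct filtration) argument then upgrades this to $\Phi$ itself being a quasi-isomorphism. One must check the filtration is exhaustive and the spectral sequence converges, which holds because we work with germs and the filtration by vanishing order is locally finite in each total degree.

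The main obstacle I anticipate is \emph{not} the differential-intertwining identity — that is mechanical — but rather making the ``associated graded is a bar complex of $\operatorname{Sym}(A^*)$'' step precise: one needs to identify the face maps $\partial_i$ modulo higher order correctly, which requires a clean description of a tubular neighbourhood of $M$ in each $\sfG_k$ compatible with the simplicial structure, and to verify that the interior multiplications $\partial_i$, $1\le i\le k-1$, reduce on associated graded to the concatenation/comultiplication maps of the shuffle-type complex rather than introducing anchor-dependent corrections. Equivalently, one can bypass the tubular-neighbourhood choice by checking the quasi-isomorphism after further localizing: both $\hat C^\bullet_\textup{diff}$ and $\Omega^\bullet_\textup{Lie}(A)$ are sheaves of complexes over $M$, so it suffices to compare stalks at a point $x\in M$, where $A$ is trivialized and the groupoid looks like a local Lie group times a transversal, reducing everything to the classical van~Est theorem for Lie groups (Lie algebra cohomology computed by germs of group cochains). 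I would present the proof via this last reduction, citing the compatibility of $\Phi$ with restriction to opens and the classical result, which is the cleanest route and matches the paper's overall strategy of localizing to the unit space.
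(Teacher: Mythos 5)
Your first route---filter germs by order of vanishing along $M$, identify the associated graded with a bar/Koszul-type complex built from $\operatorname{Sym}(A^*)$ whose cohomology collapses onto $\bigwedge^\bullet A^*$, then run a spectral sequence---is essentially the argument the paper itself invokes: the proof in \cite{kp} for the $\infty$-jet version uses exactly such a Koszul resolution, and the paper claims it adapts to germs. However, your convergence justification is wrong as stated. For germs the filtration by vanishing order is \emph{not} Hausdorff: its intersection consists of germs flat along $M$, which are plentiful, so the filtration is not ``locally finite in each total degree'' and convergence of the spectral sequence is not free. For jets the filtration is complete and convergence is automatic; for germs one needs an extra input (e.g.\ an explicit homotopy showing the flat subcomplex is acyclic, or a Borel-type splitting), and this is precisely where the passage from jets to germs requires care.

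The route you say you would actually present---compare stalks over $M$ and quote the classical van Est theorem for Lie groups---has a genuine gap. Near a unit, a Lie groupoid is in general \emph{not} isomorphic to a (local) Lie group times a transversal factor; such a normal form fails as soon as the anchor is nonzero and no properness/linearization hypothesis is assumed. Already for the pair groupoid or a foliation groupoid the isotropy is trivial while the stalkwise assertion is a Poincar\'e-lemma-type statement, not Lie-group van Est; for a general $A$ the stalk at $x$ sees the local groupoid integrating the germ of $A$ at $x$, so the ``reduction'' is the same statement in local form, i.e.\ circular, rather than a reduction to the classical theorem. (The sheaf-theoretic step itself is fine: both complexes are fine sheaves of $\calC^\infty_M$-modules, so a stalkwise quasi-isomorphism would globalize; the missing piece is the local input.) The paper's own alternative argument, sketched in Section \ref{homogeneous}, supplies that input differently: passing to homogeneous cochains, the localized complex becomes the complex of $\sfG$-invariant families of germ-at-the-diagonal Alexander--Spanier cochains on the $t$-fibers, and the fiberwise Alexander--Spanier-to-de Rham comparison, performed invariantly using that the right $\sfG$-action on itself is free and proper, identifies it with $(\Omega^\bullet_t(\sfG),d_{\textup{dR}})^{\sfG}$, i.e.\ with the Chevalley--Eilenberg complex of $A$. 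If you want a self-contained proof at the level of germs, that is the cleaner path; otherwise keep your filtration argument but replace the convergence claim by an honest treatment of the non-separated (flat-function) part of the filtration.
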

\begin{proof}
For the related case of $\infty$-jets instead of germs, this is proved in \cite{kp}. In fact the Koszul resolution used in that proof can be straightforwardly adapted to our setting of germs. We will later see an alternative proof in Section \ref{homogeneous}.
\end{proof}
\begin{remark}
In \cite{kp}, Lie algebroid cohomology appears as the Hopf-cyclic cohomology of a Hopf algebroid of jets associated to the Lie algebroid. In fact, the characteristic map of Section \ref{char-map}, when localized to the unit space, can be understood in this framework as coming from a Hopf algebroid module structure on $\calA$, together with the fact that the trace is invariant. We have decided in this paper to not explicate the Hopf algebroid point of view and just work with the characteristic maps, which are explicit enough for our purposes. 
\end{remark}
\subsection{The localized pairing}
Now that we have localized the elements in $K$-theory and differentiable groupoid cohomology, we can consider the localized version of the pairing \eqref{global-pairing}.
\begin{proposition}
The global pairing \eqref{global-pairing} localizes to a canonical pairing
\[
\llangle \ ,\ \rrangle : HC^\textup{ev}_\textup{Lie}(A)\times K_0^\textup{loc}(\calA)\to\C.
\]
\end{proposition}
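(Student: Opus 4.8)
The plan is to show that the characteristic map $\chi_\Omega$ of \eqref{eq:chi-uni}, together with the pairing \eqref{global-pairing}, descends compatibly to the localized objects, so that there is a commuting square whose vertical maps are the forgetful/colimit maps $K_0^\textup{loc}(\calA)\to K_0(\calA)$ and $HC^\textup{ev}_\textup{Lie}(A)\to HC^\textup{ev}_\textup{diff}(\sfG;\C)$. First I would unwind the support bookkeeping: if $\varphi\in C^k_\textup{diff}(\sfG,U;\C)$ is a cochain supported (as a germ) near $M$ in $U_k$, and $a_0,\ldots,a_k\in\calA(V)$ for a suitably small $V$, then the integrand in the last line of \eqref{eq:chi-uni} is supported where $g_i\in V$ for each $i$ \emph{and} $g_0\cdots g_k=1_x$; choosing $V$ with $V_\partial^{(k)}\subset U$ (iterating the $U\mapsto U_\partial$ construction $k$ times, exactly as in \eqref{ind-nghbd}) forces $(g_1,\ldots,g_k)$ to lie in $U_k$, so only the germ of $\varphi$ along $M$ enters. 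Hence $\chi_\Omega(\varphi)$, evaluated on chains built from $\calA(V)$-elements, depends only on the image of $\varphi$ in $\hat{C}^k_\textup{diff}(\sfG;\C)$. This produces, for each $U$ and each small enough $V$, a cocyclic-module map $C^\bullet_\textup{diff}(\sfG,U;\C)\to X^\bullet(\calA(V)^+)$ (one must check it lands in the unitalized algebra because of the $e-1/2$ term and the $P-e$ condition in the definition of $K_0(\calA(U))$), and these are compatible with the restriction maps in both variables.

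Next I would pass to (co)limits. Taking the direct limit over $U\supset M$ of these maps and using Proposition \ref{loc-lie} to identify $H^\bullet(\hat{C}^\bullet_\textup{diff}(\sfG;\C),\hat d)\cong H^\bullet_\textup{Lie}(A;\C)$, and correspondingly $\bigoplus_i H^{\bullet+2i}_\textup{Lie}(A;\C) = HC^\bullet_\textup{Lie}(A)$ via the same mixed-complex computation used for $HC^\bullet_\textup{diff}(\sfG)$, we obtain a map $HC^\bullet_\textup{Lie}(A)\to \varprojlim_V HC^\bullet(\calA(V)^+)$ into the inverse limit of the cyclic cohomologies of the (non-unital, non-subalgebra) pieces $\calA(V)$. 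Dually, the Connes--Chern character of a class $[P]_\textup{loc}\in K_0^\textup{loc}(\calA)=\varprojlim_U K_0(\calA(U))$ is represented, for each $U$, by an idempotent $P$ over $\calA(U)^+$; applying the explicit formula at the end of Section \ref{char-map} with $\chi_\omega$ replaced by its localized incarnation on $\calA(V)$ (valid once $V$ is small relative to $U$ so that all the $*$-products of $P$'s stay supported where the localized cochain is defined) yields the scalar. Contracting the $HC_\textup{Lie}$-valued class against this localized Chern character gives the number $\llangle\alpha,[P]_\textup{loc}\rrangle$.

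Finally I would verify well-definedness and compatibility: the number is independent of the representatives $P$ and $\varphi$ and of the auxiliary neighbourhoods $U,V$, because homotopies of idempotents in $\Mat_\infty(\calA(U)^+)$ and coboundaries in $C^\bullet_\textup{diff}(\sfG,U;\C)$ are carried into homotopies/coboundaries on the $\calA(V)$-side by the cocyclic-module property (Proposition \ref{prop:char-map}, localized), and shrinking $U$ or $V$ only composes with restriction maps that are already built into the (co)limits. Compatibility with \eqref{global-pairing} — i.e.\ that $\llangle E(\alpha),[P]_\textup{loc}\rrangle$ maps to $\langle\chi(\alpha'),\Ch(P)\rangle$ under the forgetful maps, which is really the content of Theorem A — follows from the commuting square, since the van Est map $E$ is by construction (in \cite{wx}) induced by the same germ-truncation $C^\bullet_\textup{diff}(\sfG;\C)\to\hat C^\bullet_\textup{diff}(\sfG;\C)$ followed by $\Phi$.

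\medskip
\noindent\emph{Main obstacle.} The delicate point is purely the support calculus: one must show that for \emph{every} open $U\supset M$ there is an open $V\supset M$ such that all the iterated convolution products of $\calA(V)$-elements appearing in the Connes--Chern character formula (which involves chains of length up to $2k+1$) remain supported in a region where the germ of a $U$-localized cochain is unambiguously defined, and that this can be done uniformly enough to pass to the limit. This amounts to the observation that the "$U\mapsto U_\partial$" operation, iterated, still produces arbitrarily small neighbourhoods of $M$ — which is true because $M$ is a set of units and $\partial_i$ restrict to identities on $M$ — but making the bounds fit together across the projective system, and checking that the unitalization and the $P-e\in\Mat_\infty(\calA(U))$ condition interact correctly with these supports, is where the real work lies. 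Everything else is the formal transport of the cocyclic-module structures already established in Propositions \ref{prop:char-map} and \ref{loc-lie}.
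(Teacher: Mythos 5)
Your proposal follows essentially the same route as the paper: identify $HC^{\rm ev}_{\rm Lie}(A)$ with the direct limit of cochain complexes localized to neighbourhoods of $M$ (Proposition \ref{loc-lie}), note that $K_0^{\rm loc}(\calA)$ is a projective limit over the same system of neighbourhoods, and observe that the pairing formula \eqref{global-pairing} makes sense once the supports of the idempotent entries are small relative to the domain of the localized cochain. The support bookkeeping and well-definedness checks you spell out are exactly what the paper's brief proof leaves implicit, so your argument is a correct (and more detailed) version of the same proof.
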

\begin{proof}
By Proposition \ref{loc-lie}, Lie algebroid cohomology is computed by the complex given by \eqref{loc-cochain} of differentiable groupoid cochains that are localized to the unit space $M$. As a direct limit over open neighbourhoods of $M$ in $\sfG$, the pairing \eqref{global-pairing} canonically lifts to a pairing with $K^{\rm loc}_0(\calA)$, which by definition \eqref{def-loc-k} is a projective limit. This proves the proposition.
\end{proof}

By Proposition \ref{loc-lie} there is a canonical morphism $E:H^\bullet_\textup{diff}(\sfG;\C)\to H^\bullet_\textup{Lie}(A;\C)$ given on the cochain level by localizing groupoid cochains to the unit space, and possibly composing with $\Phi$ to land in the standard Chevalley--Eilenberg complex $(\Omega^\bullet_A,d)$. This map is called the van Est map \cite{wx,crainic}. 

\begin{theorem}
For $[P]_\textup{loc}\in K_0^\textup{loc}(\calA)$ and $\varphi\in HC^\textup{ev}_\textup{diff}(\sfG)$ the following equality holds true:
\[
\llangle[P]_\textup{loc},E(\varphi)\rrangle:=\left<[P],[\varphi]\right>.
\]
\end{theorem}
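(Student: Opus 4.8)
The plan is to show that both sides of the claimed identity are computed by one and the same expression, namely the explicit cochain-level formula for the global pairing \eqref{global-pairing}, as soon as all representatives are chosen supported in a sufficiently small neighbourhood of the unit space $M$; the van Est map $E$ is then exactly the device that records this localization. So first I would fix a representative $(P,e)$ of $[P]_\textup{loc}$ with $P-e\in\Mat_\infty(\calA(U))$ for some open neighbourhood $U\supset M$ in $\sfG$ — such a representative exists by the definition \eqref{def-loc-k} of localized $K$-theory — together with a cocycle $\varphi=(\varphi_{2k},\dots,\varphi_0)\in\Tot^{2k}{\mathcal{B}}C_\textup{diff}(\sfG;\C)$ representing the given class. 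The image of $[P]_\textup{loc}$ under the forgetful map \eqref{forgetful} is the class $[P]\in K_0(\calA)$, so by the prescription following \eqref{global-pairing}, $\langle[P],[\varphi]\rangle$ is a finite alternating sum of terms of the form $\chi_\Omega(\varphi_{2i})\big((P-\tfrac12)\otimes P\otimes\cdots\otimes P\big)$, with the scalar summand of $P$ and the non-unitality dealt with through $\calA(U)^+$ exactly as in Section \ref{char-map}.

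The heart of the argument is a support estimate for the characteristic map \eqref{eq:chi-uni}. A term $\chi_\Omega(\psi)(a_0\otimes\cdots\otimes a_m)$ is an integral over $M$ of the pointwise products $a_0(g_0)\,\psi_1(g_1)a_1(g_1)\cdots\psi_m(g_m)a_m(g_m)$ along the fibre $\{(g_0,\dots,g_m)\mid g_0g_1\cdots g_m=1_x\}$. If each $a_j$ has support in $U$ — which is the case for the non-scalar part $P-e$, the adjoined scalar contributing only along $M\subset U$ — then $g_1,\dots,g_m\in U$, and the relation $g_0=(g_1\cdots g_m)^{-1}$ confines $(g_0,\dots,g_m)$ to a neighbourhood of $M$ in $\sfG_{m+1}$ that is built from $U$ in the same inductive manner as the sets $U_k$ of \eqref{ind-nghbd}, and that collapses onto $M$ as $U$ does. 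Hence each term in $\langle[P],[\varphi]\rangle$ depends only on the restrictions of the $\varphi_{2i}$ to such neighbourhoods, i.e.\ only on their images in the localized cochain complexes entering \eqref{loc-cochain}; and, letting $U$ shrink, only on the germs $\hat\varphi\in\hat C^\bullet_\textup{diff}(\sfG;\C)$. The same estimate shows that the piecewise $\calC^1$ homotopies used in \eqref{def-loc-k} can be kept supported in $U$, so that $\langle[P],[\varphi]\rangle$ is unchanged under the equivalence relation defining $K_0(\calA(U))$ and under passing to smaller $U$.

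It remains to match this with the left-hand side. By construction — the proof of the Proposition immediately preceding this Theorem — the localized pairing $\llangle\ ,\ \rrangle$ is \emph{defined} as the direct limit over $U\supset M$ of the pairings $HC^\textup{ev}_\textup{diff}(\sfG,U;\C)\times K_0(\calA(U))\to\C$ induced by $\chi_\Omega$, transported to $HC^\textup{ev}_\textup{Lie}(A)$ along the quasi-isomorphism $\Phi$ of Proposition \ref{loc-lie} which identifies $H^\bullet(\hat C_\textup{diff}(\sfG;\C),\hat d)$ with $H^\bullet_\textup{Lie}(A;\C)$. By the definition of the van Est map recalled just before the Theorem, $E(\varphi)$ is precisely the class of the germ $\hat\varphi$ under this identification. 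Therefore $\llangle[P]_\textup{loc},E(\varphi)\rrangle$ is computed by exactly the formula analyzed above, evaluated on a small enough $U$, and thus equals $\langle[P],[\varphi]\rangle$.

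I expect the main obstacle to be the support bookkeeping in the middle step: one must check carefully that the iterated convolutions implicit in \eqref{eq:chi-uni}, together with the Chern character polynomials entering \eqref{global-pairing}, stay supported in neighbourhoods of $M$ of the type \eqref{ind-nghbd}, so that the pairing genuinely factors through the germ complex $\hat C^\bullet_\textup{diff}(\sfG;\C)$, and that all maps so obtained are compatible with the inverse limit \eqref{def-loc-k}, the direct limit \eqref{loc-cochain}, and the quasi-isomorphism $\Phi$. The treatment of the unitalization and of the scalar summand $e$ needs a short separate check, but this merely repeats verifications already made in Section \ref{char-map}.
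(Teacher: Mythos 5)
Your proposal is correct and follows essentially the same route as the paper, whose proof simply observes that the identity is immediate from the way the localized pairing was constructed (as the limit of the global pairing \eqref{global-pairing} over cochains and idempotents localized near $M$, with $E$ given by localization followed by $\Phi$); your support-bookkeeping argument is just an explicit spelling-out of that formalism.
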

\begin{proof}
This is now immediately clear from the formalism we have developed so far. 
\end{proof}
We have now completed the proof of theorem A in the introduction.
\begin{remark}
The van Est map $E:H^\bullet_\textup{diff}(\sfG;\C)\to H^\bullet_\textup{Lie}(A;\C)$ is quite well understood by the work of Crainic: Theorem 4 of \cite{crainic} states that when the $s$-fibers of $\sfG$ are homologically $n$-connected, $E$ is an isomorphism in degree $\leq n$ and injective in degree $n+1$.
\end{remark}

\section{The localized index class}
\label{sec:local-pairing}

In this section, we use the longitudinal pseudodifferential calculus on Lie groupoids of \cite{nwx} to show that an 
elliptic operator in that calculus defines a localized index class in $K_0^\textup{loc}(\calA)$, 
refining the index class in $K_0(\calA)$ (cf.~also \cite{montpier}). 
\subsection{Longitudinal pseudodifferential operators}
Let us briefly recall the definition of longitudinal pseudodifferential operators from \cite{nwx}. 
Informally, these are $\sfG$-invariant families of pseudodifferential operators along the fibers of the submersion $t:\sfG\to M$. 
In the following, we write $\sfG_x,~x\in M$ for the fiber $t^{-1}(x)\subset\sfG$. 
Let $E$ be a vector bundle over $M$, and denote by $t^*E$ its pull-back to $\sfG$. Any arrow $g\in\sfG$ defines a linear operator 
$U_g:\Gamma^\infty (\sfG_{s(g)},t^*E)\to \Gamma^\infty (\sfG_{t(g)},t^*E)$ by right translation:
\[
(U_g s)(h):=s(hg^{-1}),\quad s\in \Gamma(\sfG_{s(g)},t^*E),
\]
which uses the fact that $(t^*E)_h=E_{t(h)}=(t^*E)_{hg}$.
A longitudinal pseudodifferential operator $P$ of order $m$ is given by a family $\{P_x\}_{x\in M}$ of pseudodifferential operators 
$P_x\in\Psi^m(\sfG_x;t^*E)$, which is:
\begin{itemize}
\item {\em smooth} in its dependence of $x$, in a precise sense explained in \cite[Def.~6]{nwx},
\item {\em invariant} in the sense that
\[
P_{t(g)}U_g=U_gP_{s(g)},
\]
for all $g\in\sfG$.
\end{itemize}
To turn the space of such pseudodifferential operators into an algebra, we need some condition on the support. As a family 
of operators on the $t$-fibers, the Schwartz-kernel of a pseudodifferential operator $P$ is in fact a family of kernels 
$\left( k_x\right)_{x\in M}$, where $k_x$ is a distribution on $t^{-1}(x)\times t^{-1}(x)$. The support of $P$ is now defined as
\[
\supp(P):=\overline{\cup_{x\in M}\supp(k_x)},
\]
where the closure is taken in the space 
\[
 \sfG^{\times_{\!t} 2}:=\sfG {\times_{\!t}} \sfG := 
 \left\{ (g_1,g_2)\in\sfG\times\sfG \mid t(g_1)=t(g_2) \right\} .
\]
Consider the map $\mu:\sfG^{\times_{\!t} 2}\to\sfG$ given by $\mu(g_1,g_2)=g_1g_2^{-1}$. 
A pseudodifferential operator $P$ is said to be uniformly supported if $\mu(\supp(P))\subset\sfG$ is compact.
 We write $\Psi^m(\sfG,E)$ for the space of uniformly supported longitudinal pseudodifferential operators 
on $\sfG$ of order $m$ acting on sections of the vector bundle $t^*E$. As usual, we set
\[
  \Psi^\infty(\sfG,E)=\bigcup_m\Psi^m(\sfG,E),\quad \Psi^{-\infty}(\sfG,E)=\bigcap_m\Psi^m(\sfG,E).
\]
It is proved in \cite{nwx} that composition of families of operators turns $\Psi^{\infty}(\sfG,E)$ into a 
filtered algebra. Furthermore, the subalgebra $\Psi^{-\infty}(\sfG,E)$ of smoothing operators is isomorphic, 
via the so-called ``reduced kernel'' defined by
\[
  k_P(g):=k_{t(g)}(g,t(g)),
\]
to the convolution algebra $\calA(E)$ of remark \ref{E-valued-conv}. For $E$ the trivial bundle, this is just \eqref{convolution} and we 
find the algebra $\calA$.

Next, we describe the symbol calculus for longitudinal pseudodifferential  operators. To this end recall first 
that the space $\symb^m(A^*)$  of \emph{symbols of order} $m\in \R$ on the bundle $A^*$ is defined as the space of 
smooth functions $a$ on the total space $A^*$ such that in bundle coordinates $(x,\xi)$ an estimate of the form  
\begin{equation}
  \label{eq:symb}
   \left| \big(\partial^\alpha_x\partial^\beta_\xi a\big)_{|\pi^{-1} (K)} \right| \leq C_{K,\alpha,\beta} \left( 1 + \| \xi \| \right)^{m -|\beta|}
\end{equation}
holds true for each compact $K\subset M$ in the domain of $x$, all multi-indices
$\alpha\in \N^{\dim M}$, $\beta \in \N^{\dim \sfG - \dim M}$ and some $C_{K,\alpha,\beta} >0$ depending only on $K,\alpha$, and $\beta$.
Analogous to  pseudodifferential operators, we put
\[
  \symb^\infty(A^*)=\bigcup_m\symb^m(A^*),\quad \symb^{-\infty}(A^*)=\bigcap_m\symb^m(A^*).
\]
Observe that $ \symb^m(A^*)$ contains the space $ \symb^m_\textup{cl}(A^*)$ of \emph{classical symbols}
which we will use in the following as well (cf.~\cite{shubin} for details).

Next, we choose a connection $\nabla$ on $A$. Pulling back along $s$, we obtain a connection 
$s^*\nabla$ on $s^*A$, which restricts to a usual connection on each fiber $\sfG_x$ of $t$. With this, 
we can define the exponential mapping $\exp_x:A_x\to\sfG$. Varying over $x\in M$, we get the global 
exponential mapping
\[
  \exp_\nabla:U\to\sfG,
\]
where $U\subset A$ is a neighbourhood of the zero section. This map is a diffeomorphism onto a neighbourhood 
$V$ of $M$ in $\sfG$. Choose a cut-off function $\phi\in \calC^\infty_\textup{c}(\sfG)$ with support in $V$ and 
equal to $1$ in a smaller neighbourhood of $M$. For $\xi\in A^*$, denote by $e_\xi\in \calC^\infty(V)$ the 
function defined by
\[
  e_\xi(g):=\phi(g)e^{\sqrt{-1}\left<\xi,\exp_\nabla^{-1}(g)\right>}.
\]
Then the symbol of $P$ is defined as
\[
  \sigma_{\nabla,\phi}(P)(x,\xi)=(P_xe_\xi)(x),\quad\mbox{for}~\xi\in A_x^*.
\]
This defines a map $\sigma_{\nabla,\phi}:\Psi^m(\sfG)\to\symb^m_\textup{cl}(A^*)$. 
It is independent of $\nabla$ and $\phi$ modulo $\symb^{m-1}_\textup{cl}(A^*)$, and defines the principal symbol
\begin{equation}
\label{principal-symbol}
  \sigma_m:\Psi^m(\sfG)\to\symb^m(A^*)\slash \symb^{m-1}(A^*)
\end{equation}

Conversely, let us define a quantization map $\op:\symb_\textup{cl}^m(A^*)\to \Psi^m(\sfG)$ by
\[
\left(\op(a)(f)\right)(g):=\int_{A^*_{s(g)}}\int_{\sfG_{t(g)}}e_{-\xi}(hg^{-1})a(s(g),\xi)f(h)dhd\xi.
\]
The reduced kernel of this operator then is given by
\begin{equation}
\label{reduced-kernel}
k_{\op(a)}(g)=\int_{A^*_{s(g)}}e_{-\xi}(g^{-1})a(s(g),\xi).
\end{equation}

\begin{remark}
Following our notation from \cite{ppt} we have shifted the usual normalization factors  
$\frac{1}{(2\pi\sqrt{-1})^{n/2}}$  in the  definition of $\op$ into the measure $d\xi$ 
on the fiber $A^*_{s(g)}$.
\end{remark}

Since a symbol of order $-\infty$ defines a smoothing operator which can be viewed as an element in 
$\calA$, we can take its trace as defined in \eqref{trace}. The following is a straightforward 
computation with the kernel as given above:
\begin{proposition}
\label{prop:trace-dual}
For $a\in\symb^{-\infty}\left(A^*\right)$:
\[
\tau_\Omega\left(\op(a)\right)=\int_{A^*} a\Omega.
\]
\end{proposition}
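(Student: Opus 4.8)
The plan is to compute the left-hand side explicitly from the reduced-kernel formula and to recognize the result as the stated integral over the total space $A^*$. Since $a\in\symb^{-\infty}(A^*)$, the operator $\op(a)$ is smoothing, so under the reduced-kernel isomorphism $\Psi^{-\infty}(\sfG)\cong\calA$ it is the element of $\calA$ whose value at $g\in\sfG$ is given by formula~\eqref{reduced-kernel}. By the definition~\eqref{trace} of $\tau_\Omega$, we have $\tau_\Omega(\op(a))=\int_M k_{\op(a)}\lrcorner\,\Omega$, so only the values $k_{\op(a)}(u(x))$, $x\in M$, on the unit space enter.

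First I would evaluate $k_{\op(a)}$ on the unit space. Setting $g=u(x)$ in~\eqref{reduced-kernel} we have $s(u(x))=x$ and $u(x)^{-1}=u(x)$; moreover $\exp_\nabla^{-1}(u(x))=0_x\in A_x$, since the groupoid exponential carries the zero section of $A$ to the unit space, and $\phi(u(x))=1$, since the cut-off $\phi$ equals $1$ near $M$. Hence $e_{-\xi}(u(x))=\phi(u(x))\,e^{\sqrt{-1}\langle-\xi,\,0_x\rangle}=1$ for every $\xi\in A_x^*$, and therefore
\[
k_{\op(a)}(u(x))=\int_{A_x^*}a(x,\xi)\,d\xi ,
\]
the fibrewise integral of $a$ against the translation-invariant measure $d\xi$ on $A_x^*$, normalized so as to absorb the constant indicated in the remark following~\eqref{reduced-kernel} and to take values in $\big(\bigwedge^{\textup{top}}A^*\big)_x$, as an element of $\calA$ must.

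Next I would substitute this into $\int_M k_{\op(a)}\lrcorner\,\Omega$. Writing $\Omega(x)=\sum\Omega_{(1)}(x)\otimes\Omega_{(2)}(x)$ with $\Omega_{(1)}(x)\in\bigwedge^{\textup{top}}T_x^*M$ and $\Omega_{(2)}(x)\in\bigwedge^{\textup{top}}A_x$, the contraction replaces $k_{\op(a)}(u(x))$ by the scalar $\big\langle\int_{A_x^*}a(x,\xi)\,d\xi,\ \Omega_{(2)}(x)\big\rangle$ times the base density $\Omega_{(1)}(x)$. The point is that $\Omega_{(2)}(x)\in\bigwedge^{\textup{top}}A_x=\bigwedge^{\textup{top}}(A_x^*)^*$ is exactly a translation-invariant top form on the fibre $A_x^*$, so pairing it with $d\xi$ converts the fibrewise integral into the genuine integral of $a(x,-)$ over $A_x^*$ against this fibre volume form; together with $\Omega_{(1)}(x)$ on the base, this is precisely integration against the volume form $\Omega$ on the total space $A^*$. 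Applying Fubini to the resulting iterated integral over $M$ and the fibres $A_x^*$ then yields $\tau_\Omega(\op(a))=\int_{A^*}a\,\Omega$.

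I expect the only delicate step to be this last bit of normalization bookkeeping: one must check that the measure $d\xi$ implicit in~\eqref{reduced-kernel}, together with the constant absorbed into it, pairs with $\bigwedge^{\textup{top}}A$ via exactly the pairing $\langle\ ,\ \rangle$ used to define the contraction $\lrcorner$, so that no spurious constant survives. Once the conventions are matched, the remaining ingredients — the reduced-kernel description of smoothing operators, the identity $e_{-\xi}(u(x))=1$, and Fubini's theorem — are routine.
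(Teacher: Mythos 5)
Your computation is correct and is exactly the "straightforward computation with the kernel" that the paper invokes without writing out: evaluate the reduced kernel \eqref{reduced-kernel} at units using $e_{-\xi}(u(x))=1$, contract with $\Omega$ so that $\Omega_{(2)}(x)\in\bigwedge^{\textup{top}}A_x$ supplies the fibrewise volume on $A_x^*$ and $\Omega_{(1)}(x)$ the base density, and apply Fubini. Your closing caveat about matching the normalization of $d\xi$ with the pairing in the contraction is precisely the convention the paper fixes in the remark following \eqref{reduced-kernel}, so nothing further is needed.
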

%\begin{corollary}
%If $\sfG$ is unimodular, the trace on $\Psi^{-\infty}(\sfG)$ associated to the choice of an invariant transversal measure $\Omega$ satisfies
%\[
%\tau_\Omega\left(A-\op(\sigma(A))\right)=0,\quad A\in\Psi^{\infty}(\sfG).
%\]
%\end{corollary}

\subsection{The universal enveloping algebra}The universal enveloping algebra of the Lie algebroid 
$A$ is a concrete model for  the subalgebra of invariant differential operators along the $t$-fibers 
inside $\Psi^\infty(\sfG)$, cf.\ \cite{nwx}. This algebra is generated by $\calC^\infty(M)$ considered as 
multiplication operators in $\Psi^0(\sfG)$ and $\Gamma^\infty(M,A)$ viewed as first order invariant 
differential operators without constant term in $\Psi^1(\sfG)$ using the canonical isomorphism 
$s^*A\cong T_t\sfG$. If we write these two identifications as maps 
$i_1:\calC^\infty(M)\to\Psi^\infty(\sfG)$ and $i_2:\Gamma^\infty(M,A)\to \Psi^\infty(\sfG)$, 
we have the properties
\[
i_1(f)i_2(X)=i_2(fX),\quad [i_2(X),i_1(f)]=i_1(\rho(X)f),
\]
for $f\in \calC^\infty(M)$ and $X\in \Gamma^\infty(M,A)$, together with the fact that $i_2$ is a morphism 
of Lie algebras. These properties ensure that $i_1$ and $i_2$ lift to an algebra morphism from the 
universal enveloping algebra $i:\mathcal{U}(A)\to\Psi^\infty(\sfG)$ whose image are exactly the 
invariant differential operators. 

The Poincar\'e--Birkhoff--Witt theorem for Lie algebroids (cf.\ \cite[Thm 3]{nwx}) states that the restriction of the symbol map to the image of $\calU(A)$ in $\Psi^\infty(\sfG)$ gives an isomorphism of filtered vector spaces
\[
\sigma_\nabla\circ i:\calU(A)\to\Gamma^\infty(M,\symm A ),
\]
where $\symm A$ stands for the symmetric algebra bundle of $A$. Note that there is a natural
embedding $\Gamma^\infty(M,\symm A )  \hookrightarrow \symb^\infty (A^*)$ which is unique\-ly determined by the 
universal property of $\symm A$ and the requirement that a section  $X \in \Gamma^\infty (M, A)$ is mapped 
to the symbol 
\[
  \hat{X} : A^* \rightarrow \C, \quad  \xi \mapsto \lb \xi , X (\pi (\xi)) \rb .
\]
Hereby, $\pi : A^* \rightarrow M$ denotes the canonical projection.

\subsection{Elliptic operators and the index class}
We now turn to elliptic operators and use the symbol calculus to define the index class.
For a vector bundle $E$ over $M$, we define $\calU(A;E):=\calU(A)\otimes\End(E)$. The embedding of the previous section identifies 
this algebra as the algebra of invariant differential operators on sections of the vector bundle $t^*E$ inside $\Psi^\infty(\sfG;E)$. 
With these coefficients, the principal symbol map  \eqref{principal-symbol} restricts for each $k\geq 0$ to
\[
  \sigma_k\circ i:\calU_k(A;E)\to\Gamma^\infty(M,\symm^k(A)\otimes\End(E))\subset\Gamma^\infty(A^*,\End(\pi^*E)),
\]
where $\symm^k(A)$ is the bundle of $k$-fold symmetric powers of $A$. With this we now make the usual definition:
\begin{definition}
\label{elliptic}
An element $D\in \calU_k(A;E)$ is called {\em elliptic} if 
\[
  \sigma_k(i(D))(\xi)\in \End(E)
\] 
is invertible for all $\xi\in A^*\backslash M$.
\end{definition}
Clearly, this condition is equivalent to requiring that the differential operator $i(D)_x$ on $\Gamma(t^{-1}(x),t^*E)$ is elliptic 
for each $x\in M$. By definition, an elliptic element determines a class
\[
  [\sigma(i(D))]\in K^1(A^*\backslash M)\stackrel{\partial}{\longrightarrow} K^0(A^*).
\]
\begin{remark}
On a given Lie algebroid $A\to M$ there are plenty of elliptic operators in the sense above. 
As in \cite{ln}, one can construct examples by mimicking the standard constructions of elliptic differential operators on manifold. For example, if we fix a metric on $A$, we can define a Hodge $*$-operator and consider the ``signature operator''
\[
d\pm *d*\in\calU\left(A;\textstyle\bigwedge^\bullet A^*\right),
\]
with $d$ the Chevalley--Eilenberg operator of Lie algebroid cohomology \eqref{lie-alg-diff}.
This operator is elliptic in the sense of Definition \ref{elliptic} exactly by the same argument that the usual signature operator on a manifold is elliptic. In the same spirit, the Dirac operators constructed in \cite{ln} are examples of elliptic operators on a Lie algebroid.
\end{remark}

\begin{proposition}
An elliptic element $D\in\calU(A;E)$ canonically defines a localized index class
\[
  [\Ind(D)]_\textup{loc}\in K_0^\textup{loc}(\calA).
\]
\end{proposition}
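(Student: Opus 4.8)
The plan is to build an explicit parametrix for $D$ inside the longitudinal pseudodifferential calculus $\Psi^\infty(\sfG;E)$ and to extract from it an idempotent in a matrix algebra over $\calA(U)^+$ for $U$ an arbitrarily small neighbourhood of $M$, then check that shrinking $U$ is compatible with the equivalence relations so that a class in the projective limit $K_0^\textup{loc}(\calA)$ is defined. First I would invoke ellipticity of $D$ in the sense of Definition \ref{elliptic}: the principal symbol $\sigma_k(i(D))$ is invertible off the zero section, so by the standard symbol-calculus construction in $\symb^\bullet_\textup{cl}(A^*)$ there is $Q_0\in\Psi^{-k}(\sfG;E)$ with $i(D)Q_0-1\in\Psi^{-1}(\sfG;E)$ and $Q_0i(D)-1\in\Psi^{-1}(\sfG;E)$; iterating the Neumann-series trick improves this to a genuine parametrix $Q$ with both remainders $S_-=Q i(D)-1$ and $S_+=i(D)Q-1$ lying in $\Psi^{-\infty}(\sfG;E)\cong\calA(E)$, the smoothing ideal identified with the convolution algebra via the reduced kernel. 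This is exactly the point where the support condition enters: since $Q$ and $i(D)$ are uniformly supported, so are the smoothing remainders, hence their reduced kernels are compactly supported sections of $s^*(\bigwedge^\textup{top}A^*\otimes\End(E))$, i.e.\ honest elements of $\calA(E)$.

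Next I would run the Atiyah–Bott–style idempotent construction: from the parametrix one forms in $\Mat_2(\calA(E)^+)$ the familiar projector
\[
P_Q=\begin{pmatrix} 1-S_-^2 & (1+S_-)S_- Q\\ S_+ i(D) & S_+^2\end{pmatrix},
\qquad e=\begin{pmatrix} 1 & 0\\ 0 & 0\end{pmatrix},
\]
(after the usual small modification making $i(D)$ invertible modulo smoothing and replacing $Q$ by $(1+S_-)Q$), so that $P_Q^2=P_Q$, $P_Q-e\in\Mat_2(\calA(E))$, and — by the standard computation — the pair $(P_Q,e)$ defines a class in $K_0(\calA(E))$ whose image under the forgetful map $K_0(\calA(E))\to K_0(\calA)$ is the usual index class of $D$. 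To localize, note that $\calA(E)$ is a filtered direct limit over neighbourhoods $U\supset M$ of the $\calA(U;E)$: the reduced kernel of $\op(a)$ for $a\in\symb^{-\infty}(A^*)$ is supported in the image of $\exp_\nabla$ by \eqref{reduced-kernel}, which can be taken inside any prescribed $U$ by choosing the cut-off $\phi$ supported in $U$; composing finitely many such operators lands in some $\calA(U_\partial;E)$, still inside an arbitrarily small neighbourhood. Hence $P_Q$ and all the homotopies in the Atiyah–Bott argument can be arranged to have support in $\calA(U)^+$ for $U$ as small as we like, and the inclusions $K_0(\calA(V;E))\to K_0(\calA(U;E))$ send the class built over $V$ to the class built over $U$ (the constructions are compatible with shrinking $\phi$). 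This produces a well-defined element of the projective limit $K_0^\textup{loc}(\calA)=\varprojlim_U K_0(\calA(U))$, which we call $[\Ind(D)]_\textup{loc}$.

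Finally I would verify independence of the choices made — the connection $\nabla$, the cut-off $\phi$, and the parametrix $Q$ — by the usual homotopy arguments (any two parametrices differ by a smoothing operator; different $\nabla,\phi$ are joined by a path, giving a homotopy of idempotents through $\Mat_\infty(\calA(U)^+)$ after enlarging $U$ slightly), and check compatibility with stabilization. The main obstacle I expect is bookkeeping the support condition through the Neumann-series parametrix and the Atiyah–Bott homotopies: one must show that \emph{finitely many} convolution products and the connecting homotopies never force the support out of the class of ``arbitrarily small'' neighbourhoods — precisely the phenomenon encoded by the operation $U\mapsto U_\partial$ and the inductive neighbourhoods $U_k$ of \eqref{ind-nghbd}. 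Once one observes that for every target neighbourhood $U$ one can choose an auxiliary $V$ with $V_\partial\subset U$ (and more generally $V$-iterates inside $U$), this is routine, but it is the one place where the groupoid geometry, rather than formal algebra, does real work.
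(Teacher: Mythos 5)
Your proposal follows essentially the same route as the paper's proof: construct a parametrix at the symbol level, quantize it with a cut-off function supported in an arbitrarily small neighbourhood $U'$ of the unit space (with iterated $\partial$-saturations inside the target $U$), form the standard $2\times 2$ graph-projection idempotent from the smoothing remainders, and use a homotopy of cut-off functions to show the resulting classes are compatible under shrinking the neighbourhood, hence define an element of the projective limit $K_0^\textup{loc}(\calA)$. The only difference is cosmetic --- the paper packages the idempotent via the invertible matrix $L$ and $R=L\,\mathrm{diag}(I,0)\,L^{-1}-\mathrm{diag}(0,I)$ rather than your explicit Atiyah--Bott form, but these are the same standard construction.
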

\begin{proof}
With the operator-symbol calculus at our disposal, the proof follows the standard pattern: using the symbol calculus, we find 
$e\in\symb^\infty(A^*)$ such that
\[
  e\sigma(D)-1\in \symb^{-\infty}(A^*)\quad\mbox{and}\quad  \sigma(D)e-1\in\symb^{-\infty}(A^*).
\]
For any open neighbourhood $U$ of $M$ in $\sfG$, choose a neighbourhood $U'$ with $(U'_\partial)_\partial \subset U$, together with a cut-off function $\phi$ with support in $U'$. The corresponding quantization $\Op(e)$ will satisfy
\[
\Op(e)D-1\in\Psi^{-\infty}(\sfG)\quad\mbox{and}\quad  D\Op(e)-1\in\Psi^{-\infty}(\sfG),
\]
and we see from \eqref{reduced-kernel} that their reduced kernels have support in $U'$. Write $E=\Op(e)$, and define $S_0:=I-ED$, and $S_1:=I-DE$, and
\[
L=\left(\begin{array}{cc}S_0& -E-S_0E\\ D&S_1\end{array}\right).
\]
Then the matrix $R$ defined by
\[
R=L\left(\begin{array}{cc}I&0\\ 0&0\end{array}\right)L^{-1}-\left(\begin{array}{cc}0&0\\ 0&I\end{array}\right)
\]
is a formal difference of projectors in $M_2(\Psi^{-\infty}(\sfG)$ which defines
an element in $K_0\left(\calA(U)\right)$. Second, for a refinement
$V\subset U$ we have two element $R_U\in K_0\left(\calA(U)\right) $ and $R_V\in K_0\left(\calA(V)\right)$ defined by using cut-off functions
$\phi_\U$ and $\phi_V$. But then family of projectors $R_t,~t\in [0,1]$ defined using the cut-off function $\chi_t=t\chi_U+(1-t)\chi_V$ gives the desired homotopy proving that
both projectors define the same element in $K_0\left(\calA(U)\right)$.
In total, this defines the element $[\Ind(D)]_\textup{loc}\in K_0^\textup{loc}(\calA)$. It is independent of any choices made.
\end{proof}
Under the forgetful map $K_0^\textup{loc}(\calA)\to K_0(\calA)$, the proof shows that the class 
$[\Ind(P)]_\textup{loc}$ is mapped to the standard index class $[\Ind(P)]\in K_0(\calA)$ in 
noncommutative geometry (cf.~\cite{connes,mw,rouse}).
But in this paper we need the full class: it is our objective to compute the pairing 
\[
\llangle\alpha,[\Ind(P)]_\textup{loc}\rrangle \in\C,
\]
with an even Lie algebroid cohomology class $\alpha\in H^\textup{ev}_\textup{Lie}(A;\C)$.

\subsection{Asymptotic symbols and deformation quantization}
\label{as-def}
The idea is to compute the pairing above using an algebraic index theorem for the Poisson manifold $A^*$. 
To make the connection to deformation quantization, we need to  introduce the asymptotic family version 
of that calculus. This is closely related to the so-called ``adiabatic-'', or ``tangent''  deformation 
of the Lie groupoid $\sfG$, cf.~\cite{connes,nwx}.

An asymptotic symbol of order $m$ is a family of symbols $\hbar\mapsto a \in \symb^m_\textup{cl} (A^*)$, 
depending on a variable $\hbar\in [0,\infty)$, which by definition has an asymptotic expansion as 
$\hbar\to 0$ of the form
\[
a\sim\sum_{k=0}^\infty \hbar^k a_{m-k},
\]
with $a_{m-k}\in\symb_\textup{cl}^{m-k} (A^*)$. We write $\asymb^m(A^*)$ for the space of asymptotic symbols of order $m$
on $A^*$, and $\jsymb^m(A^*) $ for the asymptotic symbols vanishing to all orders in $\hbar$. 
The quotient $\mbA^m:=\asymb^m(A^*)\slash\jsymb^m(A^*)$ can be identified with $\symb^m(A^*)[[\hbar]]$.

The operator product of pseudodifferential operators induces an asymptotically associative product on the 
space of asymptotic symbols by
\[
 a \circ b   :=
   \begin{cases}
      \sigma_\hbar \big( \Op_\hbar (a) \circ \Op_\hbar (b) \big), & \text{if $\hbar > 0$} ,\\
      a(-,-,0)b(-,-,0), & \text{if $\hbar =0$}.
      \end{cases}
\]
Let $\Op_\hbar = \Op \circ \iota_\hbar$ and $\sigma_\hbar = \iota_{\hbar^{-1}}\circ\sigma$,
where $\iota_\hbar : \symb^\infty (A^*) \rightarrow \symb^\infty (A^*)$
is the map which maps a symbol $a$ to the symbol
$(x,\xi) \mapsto a(x,\hbar \xi)$. By the same standard techniques of pseudodifferential calculus as in 
\cite{P1} one checks that $\circ$ has an asymptotic expansion of the form
\begin{equation}
\label{eq:prod-as}
  a \circ b \sim a b  + \sum_{k =1}^\infty c_k (a,b) \, \hbar^k ,
\end{equation}
where the $c_k$ are bidifferential operators on $A^*$ and
\begin{displaymath}
    c_1 (a,b) -c_1 (b,a) = -i \{a,b\} \quad
    \text{for all $a,b \in \symb^\infty (A^*)$},
\end{displaymath}
where $\{~,~\}$ is the canonical Lie--Poisson bracket on $A^*$, cf. \eqref{poisson-dual-la} below.
On the quotient $\mbA_{-\infty}$ this defines a star product $\star_{\rm an}$ which is a deformation quantization 
of the Poisson manifold $A^*$. We write $\mathscr{S}^\hbar_{A^*}$ for the resulting sheaf of noncommutative 
algebras on $A^*$.

The trace $\tau_\Omega$ in Proposition \ref{prop:trace-dual} on $\symb^{-\infty}(A^*)$ defines a 
$\C[\hbar^{-1},\hbar]]$-valued trace on $\mbA_{-\infty}\subset\mbA_{\infty}$ given by
\begin{equation}
\label{eq:trace-def}
 \tau_\Omega(a):=\frac{1}{\hbar^r}\int_{A^*}a\Omega.
\end{equation}

\section{An algebraic index theorem for $A^*$}
\label{sec:formality}

\subsection{Regular Poisson manifolds}
The algebraic index theorem for regular Poisson manifolds is proved in \cite{nt-hol}, as a special of a symplectic Lie algebroid. 
For our purposes, we need a local version proved along the lines of our previous work \cite{ppt}. 

Let $(P,\Pi)$ be a regular Poisson manifold of dimension $n$. This means that the image of the map $\Pi^\#:T^*P\to TP$ defined by 
contraction with the Poisson tensor defines a regular foliation $\calF\subset TP$ whose leaves are symplectic. We can introduce local 
Darboux coordinates $q_1,\ldots,q_k,p_1,\ldots,p_k,x_1,\ldots x_r,~ 2k+r=n$ such that the foliation is given by the equations 
$x_1=\ldots=x_r=0$, and the Poisson tensor is in normal form:
\begin{equation}
\label{darboux-poisson}
\Pi=\sum_{i=1}^k\frac{\partial}{\partial q_i}\wedge\frac{\partial}{\partial p_i}.
\end{equation}

\subsubsection*{The Fedosov construction}
In \cite{fedosov}, Fedosov proves that regular Poisson manifolds admit deformation quantizations 
by applying the construction of symplectic manifolds leafwise. For this we first form the bundle of  
formal Weyl algebras
\[
  \calW_\calF:=\widehat{\symm}(\calF),
\]
equipped with the fiberwise Moyal product $\star$ using the fact that $\calF$ is symplectic. Next, we fix 
a ``symplectic connection along the leaves'' $\nabla:\Gamma(P,\calF)\to\Gamma(P,\calF\otimes\Omega^1_\calF)$. 
This is just a Lie algebroid connection on $\calF$, viewed as a Lie algebroid. The connection has curvature 
$R\in\Omega^2_\calF(M,\mathfrak{sp}(\calF))$.
The Fedosov iteration procedure shows that there exists an element $A\in\Omega^1_\calF(M;\calW_\calF)$ such 
that the Weyl curvature of the leafwise connection $D=\tilde{\nabla}+[A\slash\hbar,-]$ on $\calW_\calF$ given 
by
\[
  \Omega:=\tilde{R}+\tilde{\nabla}A+\frac{1}{2\hbar}[A,A]\in \Omega^2_\calF(M,\calW_\calF),
\]
is in fact central, i.e., takes values in $\C[[\hbar]]$. This means that $D$ is a flat connection and 
defines a complex
\[
 0\longrightarrow \mathscr{A}^\hbar(P)\longrightarrow\Omega^0_\calF(P,\calW)\stackrel{D}{\longrightarrow}\ldots \stackrel{D}{\longrightarrow} \Omega^{2k}_\calF(P,\calW).
\]
Furthermore, $D$ is compatible with the $\star$-product, and we can identify $\calC^\infty(P,\C[[\hbar]])$ 
with the space of flat sections for $D$, which therefore automatically inherits a $\star$-product. Since 
the $\star$-product is local, we obtain a sheaf $\mathscr{A}^\hbar_P$ of noncommutative algebras over 
$\C[[\hbar]]$. Observe  that by construction, the center of $\mathscr{A}^\hbar(P)$ consists of all $\calF$-basic 
functions, i.e., $f\in \calC^\infty(P)\otimes\C[[\hbar]]$ satisfying
\[
  L_X(f)=0,\quad\mbox{for all}~X\in \calF.
\]

Let us briefly discuss the classification of such star-products. Recall that an equivalence between two 
deformation quantizations $\star$ and $\star'$ is given by a formal series of differential operators of 
the form
\[
  \calE={\rm id}+\sum_{k=1}^\infty \hbar^k\calE_k,
\] 
such that $\calE(a\star b)=\calE(a)\star'\calE(b)$.
\begin{proposition}[cf.\  \cite{nt-hol}]
\label{class-def}
Let $P$ be a regular Poisson manifold foliated by symplectic leaves $\calF$.
\begin{itemize}
\item[$i)$] 
  Let $\star$ be a  star-product with the property that for any open subset $U$ and an $\calF$-basic 
  function $f\in \calC^\infty(U)$,
\[
  f\star g=fg,\quad\mbox{for all}~ g\in \calC^\infty(U).
\]
Then there exists a Fedosov connection along the leaves of $\calF$, together with an equivalence $\calE$ between $\star$ and the induced Fedosov star product, whose coefficients $\calE_k$ are elements of $\calU(\calF)$.
\item[$ii)$] Star products with the property stated in $i)$ are classified by an element in
\[
\frac{1}{\hbar}\omega+\sum_{k=0}^\infty\hbar^k\Omega_k\in \frac{1}{\hbar}\omega+H^2_\textup{Lie}(\calF,\C[[\hbar]]).
\]
\end{itemize}
\end{proposition}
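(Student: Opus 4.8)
The plan is to follow the standard Fedosov-type argument for classifying star products, adapted to the leafwise situation by systematically working in the Lie algebroid $\calF$ rather than the full tangent bundle. For part $i)$, I would start from an arbitrary star product $\star$ with the stated locality property with respect to $\calF$-basic functions and build, degree by degree in $\hbar$, an equivalence $\calE$ with a Fedosov star product. The first step is to choose a symplectic connection along the leaves; since $\calF$ is a regular foliation by symplectic leaves, the existence of such a connection is the leafwise analogue of the existence of a symplectic connection on a symplectic manifold and follows from an averaging/partition of unity argument. Then the Fedosov iteration recalled just before the proposition produces a star product $\star_{\rm F}$ together with its flat connection $D$. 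The key point is that the locality hypothesis forces the obstruction cocycles at each stage of the inductive construction of $\calE$ to be leafwise differential operators with no transverse derivatives, so that the correction terms $\calE_k$ can be chosen inside $\calU(\calF)$; here one uses that $\calU(\calF)$ is exactly the algebra of invariant leafwise differential operators, as in the discussion of $\calU(A)$ in Section \ref{sec:local-pairing} applied to $\calF$ viewed as a Lie algebroid.

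For part $ii)$, the classification, I would reduce to the classification of Fedosov star products along the leaves, which by the Fedosov mechanism is governed by the choice of the central Weyl curvature $\Omega \in \frac{1}{\hbar}\omega + \hbar\, \Omega^2_\calF(P,\C[[\hbar]])$, more precisely by its leafwise de Rham class. The argument is the leafwise version of Fedosov's and Nest--Tsygan's classification: two Fedosov connections with cohomologous curvatures are related by a gauge transformation intertwining the star products, and conversely the class of $\Omega$ in $H^2_{\rm Lie}(\calF,\C[[\hbar]])$ is an invariant of the equivalence class. Combining with part $i)$, every star product with the locality property is equivalent to a Fedosov one, hence the equivalence classes are in bijection with $\frac{1}{\hbar}\omega + H^2_{\rm Lie}(\calF,\C[[\hbar]])$. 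Since this result is essentially contained in \cite{nt-hol} for symplectic Lie algebroids — of which $(\calF,\omega)$ is an instance, the leaves being symplectic — I would invoke that reference for the bulk of the cohomological bookkeeping and only indicate the modifications needed.

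The main obstacle I anticipate is the interplay between the two hypotheses in $i)$: the requirement that $\calE_k \in \calU(\calF)$ and the requirement that $\calE$ intertwine $\star$ with the Fedosov product. One must verify at each inductive step that the cohomological obstruction living a priori in leafwise Hochschild cohomology actually vanishes when restricted to the subcomplex built from $\calU(\calF)$, i.e. that the Hochschild differential and the relevant homotopies preserve the condition of being a transversally-constant-coefficient leafwise operator. This is where the locality hypothesis on $\star$ is used in an essential way: it guarantees that $\star$ itself is, to leading order, a leafwise operation, and one propagates this through the Fedosov normalization. A secondary technical point is checking that the Fedosov construction can be carried out globally along $\calF$ despite the foliation possibly being non-trivial — but this is handled exactly as in \cite{fedosov} and \cite{ppt}, since the construction is natural and glues over a cover by foliation charts in Darboux form \eqref{darboux-poisson}. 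I would therefore present the proof as a careful transcription of the symplectic case into the Lie algebroid language of $\calF$, emphasizing only these two points and referring to \cite{nt-hol,ppt,fedosov} for the standard parts.
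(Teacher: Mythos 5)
Your proposal is correct and follows essentially the same route as the paper: the paper's proof consists of observing that the hypothesis in $i)$ is equivalent to all bidifferential operators of $\star$ being along the leaves of $\calF$, and then citing \cite[Thm 4.1]{nt-hol} for the leafwise Fedosov equivalence and the classification, which is exactly the reduction you carry out (in more inductive detail) via the leafwise Fedosov machinery. The extra degree-by-degree construction of $\calE$ you sketch is sound but, as in the paper, can be delegated entirely to \cite{nt-hol} once the leafwise-bidifferential-operator observation is made.
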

\begin{proof}
The condition stated in $i)$ is equivalent to saying that the bidifferential operators appearing in the star product, are along the leaves of $\calF$. With this, the classification is proved in \cite[Thm 4.1]{nt-hol}. 
\end{proof}
\subsubsection*{The algebraic index theorem}
For $\star$-products obtained by this Fedosov procedure, the methods of \cite{ppt} apply in a straightforward way to give an algebraic index theorem. This works, because for regular Poisson manifolds we can just work along the leaves, so the only major difference is that instead of the de Rham complex, we use the complex $(\Omega^\bullet_\calF,d_\calF)$ of leafwise differential forms. Below we give an outline of this derivation, the proofs of all statements are simple adaptions of the proofs of similar statements in \cite{ppt}.

Using the generating cyclic cocycle on the Weyl algebra, one constructs the ``cyclic trace density'' morphism
\[
\Psi^i_{2m}:\calC_{2m-i}(\calW)\to\Omega^i_\calF\otimes\C[\hbar^{-1},\hbar]],
\]
where $\calC_\bullet(\calW)$ is the sheafified version of the Hochschild chains on $\calW$, by exactly the same formula as in \cite[Def. 3.5]{ppt}.
This morphism of sheaves has the fundamental property
\begin{equation}
\label{fund-prop}
(-1)^id_\calF\circ\Psi^i_{2k-2m}=\Psi^{i+1}_{2m-2k}\circ b+\Psi^{i+1}_{2k-2m+2}\circ B.
\end{equation}
On the level of cyclic complexes, this defines a morphism of complexes of sheaves:
\[
\Psi:\left(\Tot_\bullet\left(\mathcal{BC}(\mathscr{A}^\hbar_P)\right),b+B\right)\to \left(\Tot_\bullet\left(\mathcal{B}\Omega_\calF\otimes\C[\hbar^{-1},\hbar]]\right),d_\calF\right).
\]
Let us construct another map between these complexes and state the algebraic index theorem. 
First, recall that the smooth version of the Hochschild--Kostant--Rosenberg theorem states that the canonical map
\[
f_0\otimes\ldots\otimes f_p\mapsto f_0df_1\wedge\ldots\wedge df_p
\]
defines a quasi-isomorphism $(\calC_\bullet(\calC^\infty_P),b)\to(\Omega^\bullet_P,0)$ of complexes of sheaves, which sends the $B$-operator to the de Rham differential. Therefore, using the canonical projection $\mathscr{A}^\hbar_P\to \calC^\infty_P$ by putting $\hbar$ equal to zero, applying the Hochschild--Kostant--Rosenberg map and restricting to the leaves $\calF\subset TP$, we obtain a morphism 
\[
\sigma: \left(\Tot_\bullet\left(\mathcal{BC}(\mathscr{A}^\hbar_P)\right),b+B\right)\to \left(\Tot_\bullet\left(\mathcal{B}\Omega_\calF\right),d_\calF\right).
\]
Finally, introduce the characteristic class
\[
\hat{A}(\calF):=\prod_{i=1}^k\frac{ x_i\slash 2}{\sinh(x_i\slash 2)}\in H^{\rm ev}_{\rm Lie}\left(\calF;\C\right),
\]
where the $x_i$ are the leafwise Chern classes with respect to a almost complex structure compatible with the symplectic form. Now we have all ingredients to state the algebraic index theorem for regular Poisson manifolds:
\begin{theorem}[{cf.\ \cite[Thm 6.1.]{nt-hol}}]
\label{alg-ind-reg-poisson}
In the derived category of sheaves, the following diagram commutes:
\[
  \xymatrix{\Tot_\bullet\left(\mathcal{BC}(\mathscr{A}^\hbar_P)\right)\ar[rr]^{\sigma}\ar[drr]_{\Psi}&&
  \Tot_\bullet\left(\mathcal{B}\Omega_\calF\right) \ar[d]^{\wedge\hat{A}(\calF)e^{-\Omega\slash 2\pi\sqrt{-1}\hbar}}
  \\
  &&
  \Tot_\bullet\left(\mathcal{B}\Omega_\calF\right)\otimes\C[\hbar^{-1}, \hbar]]}
\]
\end{theorem}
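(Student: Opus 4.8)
The plan is to reprove this statement — which in the symplectic Lie algebroid formulation is \cite[Thm 6.1]{nt-hol} — by adapting the local argument of \cite{ppt} essentially verbatim, with the de Rham complex systematically replaced by the leafwise complex $(\Omega^\bullet_\calF,d_\calF)$ and the Fedosov construction carried out leafwise as above. The first observation is that every arrow in the diagram is a morphism of complexes of \emph{fine} sheaves on $P$: the source $\Tot_\bullet(\mathcal{BC}(\mathscr{A}^\hbar_P))$ is built from smooth Hochschild and cyclic chains, the target from smooth leafwise forms, and the maps $\sigma$, $\Psi$, the Chern--Weil form $\hat{A}(\calF)$ and the Fedosov curvature $\Omega$ are all given by $\calC^\infty(P)[[\hbar]]$-multilinear, leafwise-differential-operator data. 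Hence the derived $\Hom$-complex is computed by global sections, and it suffices to produce, on a basis of Darboux charts, a chain homotopy
\[
  \sigma\cdot\hat{A}(\calF)\, e^{-\Omega/2\pi\sqrt{-1}\hbar}-\Psi \;=\; d_\calF\circ h \;\pm\; h\circ(b+B)
\]
in the total complex, together with the observation that $h$ is again of the same local type, so that the local homotopies patch into a single global morphism witnessing commutativity in the derived category.

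Next I would set up the local model. On a Darboux chart $U\cong\R^{2k}\times\R^r$ with $\Pi$ in the normal form \eqref{darboux-poisson}, the leafwise Weyl bundle $\calW_\calF$ is canonically trivial and, after a Fedosov gauge transformation, the flat connection $D$ takes its standard form, so that $\mathscr{A}^\hbar(U)$ is identified with $\calC^\infty(U)[[\hbar]]$ equipped with the fiberwise Moyal product in the variables $(q,p)$ and constant in the transverse variables $x_1,\dots,x_r$. Under this identification $\sigma$ becomes the leafwise Hochschild--Kostant--Rosenberg morphism in the total complex, and $\Psi$ becomes the trace density of the formal Weyl algebra $W_{2k}$ of the $2k$-dimensional symplectic space — built from the generating cyclic cocycle $\tau_{2k}$ by the very formula of \cite[Def. 3.5]{ppt} — tensored with the identity in the transverse directions. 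The transverse coordinates are thus inert parameters, and the whole problem reduces to the purely symplectic statement for $W_{2k}$: the identity relating the periodic cyclic class of $\tau_{2k}$, pushed through HKR, with the $\hat{A}$-genus of the symplectic vector space. That identity is exactly the local algebraic index theorem on the formal Weyl algebra established in \cite{ppt} (equivalently \cite{nt-hol}), so no new fiberwise computation is needed here; one only has to check that the normalization conventions, with the $(2\pi\sqrt{-1})$-factors absorbed as in the convention for $\Op$, match.

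It then remains to account for the factor $e^{-\Omega/2\pi\sqrt{-1}\hbar}$ and to globalize. The appearance of the leafwise Fedosov curvature is governed by the dependence of the trace density on the Fedosov data: a standard homotopy argument in the formal parameter, linearly interpolating the Weyl curvatures as in the classification Proposition \ref{class-def}, shows that replacing the Fedosov connection changes $\Psi$, up to chain homotopy, by wedging with $e^{(\Omega'-\Omega)/2\pi\sqrt{-1}\hbar}$; choosing the connection adapted to the given star product pins $\Omega$ down to the characteristic class $\tfrac{1}{\hbar}\omega+\sum_{k\ge 0}\hbar^k\Omega_k$ of $\mathscr{A}^\hbar_P$ that enters the diagram, and the fundamental property \eqref{fund-prop} of the $\Psi^i_{2m}$ keeps everything at the level of complexes of sheaves. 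The genuinely delicate step — the main obstacle — is not the fiberwise Weyl-algebra computation, which is quoted, but precisely this globalization: one must verify that the Darboux-local chain homotopies, and in particular the one producing the $\hat{A}(\calF)$-factor out of the leafwise Fedosov curvature, are canonical enough (polynomial along the fibers, leafwise-differential along the base) to define morphisms of complexes of sheaves, so that the pointwise identity upgrades to an equality in the derived category. This bookkeeping is routine given \cite{ppt}, and the fact that everything is leafwise — one works one symplectic leaf at a time, with the transverse directions as parameters — is exactly what makes the passage from the symplectic case of \cite{ppt} to the present regular-Poisson case painless.
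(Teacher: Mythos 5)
Your proposal is correct and follows essentially the same route as the paper, which offers no detailed proof of its own but simply declares the statement to be a leafwise adaptation of the arguments of \cite{ppt} (treating the transverse Darboux coordinates as inert parameters and reducing to the trace-density computation on the formal Weyl algebra), with \cite[Thm 6.1]{nt-hol} cited as the reference; your outline fleshes out exactly this plan. The only point to keep in mind is that the globalization is best handled, as in \cite{ppt}, by noting that the homotopies come from universal (Gelfand--Fuks-type) cochains in the Fedosov data rather than by literally gluing chart-by-chart homotopies, which is what your final paragraph is in effect gesturing at.
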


\subsection{The algebraic index theorem for $A^*$}\label{sec:alg-ind}
The Poisson manifold that we are interested in is the dual $A^*$ of the Lie algebroid $A$. 
Its Poisson structure is characterized by
\begin{equation}
\label{poisson-dual-la}
\{f,g\}=0,\quad \{f,\hat{X}\}=-\rho(X)\cdot f,\quad \{\hat{X},\hat{Y}\}=-\widehat{\{X,Y\}},
\end{equation}
with $f,g\in \calC^\infty(M)$, and $X,Y\in\Gamma^\infty (M,A)$, viewed as functions $\hat{X},\hat{Y}$  on 
$A^*$, linear on the fibers. 
This Poisson structure is not regular in general, unless the anchor is injective. 
However, $A^*$ is always the quotient of a regular Poisson manifold: again we look at the map 
$t:\sfG\to M$, and define 
\[
T^*_t\sfG:=\ker(dt)^*\subset T^*\sfG. 
\]
If we view $\sfG$ as a foliated manifold with leaves $\sfG_x,~x\in M$ given by the $t$-fibers, $T^*_t\sfG$ is foliated with leaves $T^*\sfG_x,~x\in M$, which are just the fibers of the map $\tilde{t}:T^*_t\sfG\to M$ which is just $t$ precomposed with the canonical projection $T^*_t\sfG\to\sfG$. As cotangent bundles, the leaves are canonically symplectic, and we see that $T^*_t\sfG$ is a regular Poisson manifold.

Next, observe that $\sfG$ acts on itself by right translation, $R_g(h)=hg$,  and this action lifts to an action on
$T^*_t\sfG$ with moment map $\tilde{t}$. The quotient of this action is identified with $A^*$ by the map
$R:T^*_t\sfG\to A^*$ which maps $\xi\in T^*_t\sfG|_g$ to
\[
R(\xi)=R^*_{g^{-1}}(\xi)\in T^*_t\sfG|_{s(g)}=A^*_{s(g)}.
\]  
The map $R$ is a left inverse to the canonical inclusion $i:A^*\hookrightarrow T^*_t\sfG$ given by the identification $T^*_t\sfG|_M\cong A^*$.

It is proved in \cite[Lemma 7]{nwx} that $R$ is a Poisson map, i.e.,
\[
R^*\{f,g\}_{A^*}=\{R^*f,R^*g\}_{T^*_t\sfG},\quad\mbox{for all}~f,g\in \calC^\infty(A^*).
\]

As in the previous section, we denote by $\calF\subset T(T^*_t\sfG)$ the symplectic foliation of the 
Poisson structure on $T^*_t\sfG$.
On the other hand, recall the pull-back construction $\pi^!A$ of a Lie algebroid $A\to M$ over a 
submersion $\pi:P\to M$ with connected fibers, cf.\ \cite{mack}. 
This is defined as the vector subbundle of $\pi^*A\oplus TP$ given by
\[
(\pi^!A)_p:=\{(\xi,X)\in A_{\pi(p)}\oplus T_pP \mid \rho(\xi)=d\pi(X)\}. 
\]
The Lie bracket and anchor are described in \cite{mack}. In our case, we use the canonical projection $\pi:A^*\to M$ to pull-back $A$:
\begin{lemma}
\label{id-lie}
There is a canonical isomorphism $\calF|_{A^*}\cong\pi^!A$.
\end{lemma}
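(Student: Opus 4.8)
\emph{Proof strategy.} The plan is to make the symplectic foliation explicit along $A^*$, write down the isomorphism directly from the projections already at hand, and then check compatibility with the algebroid structures.

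First I would unwind $\calF|_{A^*}$. Write $p\colon T^*_t\sfG\to\sfG$ for the bundle projection, so that $\tilde t=t\circ p$ and $\calF=\ker d\tilde t$. For $\xi\in A^*_x=T^*_t\sfG|_{1_x}$ the leaf through $\xi$ is the cotangent bundle $T^*\sfG_x$, hence $\calF_\xi=T_\xi(T^*\sfG_x)$. Since $\xi$ lies over the unit $1_x$, the relation $d\tilde t=dt\circ dp$ shows that $dp$ maps $\calF_\xi$ onto $\ker dt_{1_x}=A_x$ with kernel the vertical space of $p$, which is canonically $A^*_x$; this gives an exact sequence $0\to A^*_x\to\calF_\xi\to A_x\to 0$ parallel to the defining sequence $0\to\ker d\pi\to\pi^!A\to\pi^*A\to 0$ of the pull-back algebroid. (It helps to recall $T^*_t\sfG\cong s^*A^*$, under which $p$ is the bundle projection to $\sfG$ and $R$ is the tautological map covering $s$.)

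Next I would write down the map. Set
\[
\Psi\colon\calF|_{A^*}\longrightarrow\pi^*A\oplus TA^*,\qquad \Psi(v):=\big(dp(v),\,dR(v)\big).
\]
Here $dp(v)\in A_{\pi(\xi)}$ by the previous step, and $dR(v)\in T_\xi A^*$ because $R\circ i=\mathrm{id}_{A^*}$ for the canonical inclusion $i\colon A^*\hookrightarrow T^*_t\sfG$. Using $\pi\circ R=s\circ p$ and that the anchor of $A$ is the restriction of $ds$, one computes $d\pi(dR(v))=d(s\circ p)(v)=ds(dp(v))=\rho(dp(v))$, so $\Psi$ lands in $\pi^!A$. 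As both bundles have rank $2\,\mathrm{rk}\,A$, it suffices to check fibrewise injectivity: if $dp(v)=0$ then $v$ is vertical for $p$, so $v=di(w)$ for a unique $w\in T_\xi A^*$, and $0=dR(v)=d(R\circ i)(w)=w$ forces $v=0$. Hence $\Psi$ is an isomorphism of vector bundles, it is canonical since only $p$, $R$, $\pi$ enter, and by construction it intertwines the two exact sequences above.

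It remains to match the Lie algebroid structures, and this is the part I expect to be the real work. The anchors already agree: the natural anchor of $\calF|_{A^*}$ is $v\mapsto dR(v)$, the differential of the Poisson submersion $R$, and that of $\pi^!A$ is $(\zeta,w)\mapsto w$. For the brackets I would reduce to a local frame of $\calF$ near $A^*$: over $U\subset M$ pick a frame $X_1,\dots,X_r$ of $A$, let $\overrightarrow{X_i}$ be the associated right--invariant vector fields along the $t$--fibres, and let $\widehat{X_i}$ be their leafwise cotangent lifts -- tangent to $\calF$, with $dp(\widehat{X_i})=\overrightarrow{X_i}$, and (cotangent lift being a Lie-algebra morphism up to the usual sign) with $[\widehat{X_i},\widehat{X_j}]$ the cotangent lift of $\overrightarrow{[X_i,X_j]}$; together with the vertical lifts of the dual frame these generate $\calF$ near $A^*$. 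Since $\widehat{X_i}$ is $\sfG$--invariant it pushes forward under $R$ to the Hamiltonian vector field of the linear function $\widehat{X_i}$ on the Lie--Poisson manifold $A^*$, so $\Psi$ sends $\widehat{X_i}|_{A^*}$ to the distinguished lift $(X_i,X^{\mathrm{ham}}_{\widehat{X_i}})$ of $X_i$ in $\pi^!A$; comparing $[X^{\mathrm{ham}}_f,X^{\mathrm{ham}}_g]=X^{\mathrm{ham}}_{\{f,g\}}$ and $\{\widehat{X_i},\widehat{X_j}\}=-\widehat{[X_i,X_j]}$ (see \eqref{poisson-dual-la}) with the bracket on $\pi^!A$ then gives the claim on frame sections, and on the vertical subbundles it reduces to matching the fibrewise--abelian brackets and the coadjoint actions of the $X_i$. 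The main obstacle is exactly this last step: one must pin down the Lie algebroid structure on $\calF|_{A^*}$ (via $R$, or by transport along $\Psi$), verify that the $\calF$--bracket of $R$--projectable extensions actually restricts to $A^*$, and keep the several lifts (right--invariant, cotangent, Hamiltonian) and their sign conventions consistent; alternatively, the whole lemma can be verified by a direct computation in leafwise Darboux coordinates on $T^*_t\sfG$ as in \eqref{darboux-poisson}, which turns it into a finite bookkeeping exercise.
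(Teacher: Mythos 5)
Your construction is essentially the paper's proof: the paper forms the $\sfG$-equivariant commutative square $\calF\to T_t\sfG$ over $T^*_t\sfG\to\sfG$ and passes to the quotient by the free and proper right $\sfG$-action, which at points of $A^*$ is exactly your map $v\mapsto\big(dp(v),dR(v)\big)$ into $\pi^!A$. The paper is even terser on the compatibility with the algebroid structure (it simply asserts it follows from the diagram and \cite{mack}), so your check via invariant frames and Hamiltonian vector fields fills in, rather than diverges from, its argument.
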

\begin{proof}
Let $T_t\sfG$ be the subbundle of $T\sfG$ given by the kernel of $dt:T\sfG\to 
TM$.  $T_t\sfG$ is a vector bundle over $\sfG$, and the groupoid $\sfG$ acts 
naturally on $T_t\sfG$ by right multiplication. The action is free 
and proper with the quotient being $A=T_t\sfG|_M$. The differential of the 
natural projection map from $T^*_t\sfG$ to $\sfG$ maps $\calF$ to 
$T_t\sfG$. We have a natural commutative diagram
\[
\xymatrix{\calF\ar[r]^{\pi_{T_t\sfG}}\ar[d]_{\pi_{T_t^*\sfG}}&T_t\sfG\ar[d]^{\pi_\sfG}\\
T^*_t\sfG\ar[r]^{\pi_\sfG}&\sfG.}
\]
The crucial property of the diagram is that it is compatible with the 
right $\sfG$ action, and on the quotient gives a commutative diagram
\begin{equation}
\label{diagram}
\xymatrix{\calF|_{A^*}\ar[r]^{\pi_A}\ar[d]_{\pi}&A\ar[d]^{\pi_{A^*}}\\
A^*\ar[r]^{\pi}&M
}
\end{equation}
From the above diagram, one can easily obtain the desired isomorphism from 
$\calF|_{A^*}$ to $\pi^! A$, as well the fact that this isomorphism is compatible with the Lie algebroid structure on $\pi^!A$ described in  \cite{mack}. 
\end{proof}
In our constructions, we shall need the following structures on the Lie algebroid $\pi^!A$:
\begin{itemize}
\item[$i)$] {\em The symplectic structure.} 
The Lie algebroid $\pi^!A$ carries a canonical nondegenerate closed two-form $\Theta\in\Omega^2_{\pi^!A}$. 
This can be seen as follows: consider the diagram \eqref{diagram} into which $\pi^!A$ fits. 
With this the symplectic structure $\Theta\in\bigwedge^2(\pi^!A)^*$ is given by 
\[
\Theta(X,Y)=\left<\pi_A(X),\pi_{A^*}(Y)\right>-\left<\pi_A(Y),\pi_{A^*}(X)\right>,
\]
where $\left<~,~\right>$ denotes the dual pairing between $A$ and $A^*$.
\item[$ii)$] {\em The Euler vector field.} As a vector bundle, $A^*$ carries a canonical $\R^*$-action, 
generated by the Euler vector field $\xi$. In a local frame $e_i\in\Gamma^\infty(M,A^*), i=1,\ldots, r$, we have 
\[
  \xi=\sum_{i=1}^re_i\frac{\partial}{\partial e_i}.
\]
Clearly, $d\pi(\xi)=0$, so we can consider $\xi$ as a section of $\pi^!A$. 

\end{itemize}

Recall the notion of a Lie algebroid connection on $\pi^!A$: this is a map 
\[
  \nabla: \Gamma^\infty(M,\pi^!A)\to\Gamma\left(M,\pi^!A\otimes(\pi^!A)^*\right),
\]
satisfying
\[
  \nabla_X(fY)=f\nabla_X(Y)+(\rho(X)\cdot f)Y,
\]
for all $f\in \calC^\infty(A^*)$ and $X,Y\in\Gamma^\infty(M,\pi^!A)$. A partition of unity argument shows that such connections always exist. With the structures on $\pi^!A$ described above, we shall say that a Lie algebroid connection is {\em symplectic} if
\[
  \nabla_X\Theta(Y,Z)=\rho(X)\cdot \Theta(Y,Z)-\Theta(\nabla_XY,Z)-\Theta(Y,\nabla_XZ)=0,
\]
for all $X,Y,Z\in\Gamma^\infty(A^*,\pi^!A)$. Similarly, it is said to be {\em homogeneous} if for all 
$X,Y\in\Gamma^\infty(A^*,\pi^!A)$
\[
  \rho(\xi)\cdot\nabla_X(Y)-\nabla_{[\xi,X]}(Y)-\nabla_X([\xi,Y])=0.
\]
\begin{proposition}\label{lem:conn}
Choosing an $\pi^!A$-connection $\nabla$ on $\pi^!A$ is equivalent to choosing a $\sfG$-invariant connection 
$\tilde{\nabla}$ on $T^*_t\sfG$ along the leaves $\calF$. 
\begin{itemize}
\item[$i)$] 
  $\tilde{\nabla}$ is Poisson if and only if $\nabla$ is symplectic,
\item[$ii)$] 
  $\tilde{\nabla}$ is homogeneous if and only if $\nabla$ is homogeneous.
\end{itemize}
\end{proposition}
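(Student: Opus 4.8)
The plan is to deduce the proposition from an equivariant descent along the map $R\colon T^*_t\sfG\to A^*$. Recall from the proof of Lemma~\ref{id-lie} that $R$ exhibits $A^*$ as the base of a free and proper right $\sfG$-action on $T^*_t\sfG$, that this action lifts to the leafwise tangent bundle $\calF$, and that the quotient vector bundle $\calF/\sfG$, with the Lie algebroid structure it inherits from $\calF\subset T(T^*_t\sfG)$, is canonically isomorphic to $\pi^!A$ compatibly with the diagram~\eqref{diagram}. First I would make precise the consequences of this: under the identification $\calF/\sfG\cong\pi^!A$ the anchor $\rho_\calF$ and the Lie bracket on $\Gamma^\infty(T^*_t\sfG,\calF)$ are $R$-related to those of $\pi^!A$; and $\sfG$-invariant sections of $\calF$, of $\calF^*$, and of $\calF\otimes\calF^*$ correspond bijectively to sections over $A^*$ of $\pi^!A$, $(\pi^!A)^*$, and $\pi^!A\otimes(\pi^!A)^*$.

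Granting this, the first assertion becomes formal. A leafwise connection $\tilde\nabla$ on $T^*_t\sfG$ is $\sfG$-invariant exactly when it maps $\sfG$-invariant sections of $\calF$ to $\sfG$-invariant sections of $\calF\otimes\calF^*$; since locally every section of $\calF$ is a $\calC^\infty$-combination of invariant ones, such a $\tilde\nabla$ is then determined by its restriction to invariant sections, and this restriction is---because $\rho_\calF$ corresponds to the anchor of $\pi^!A$, so the Leibniz rules match---precisely the datum of a Lie algebroid connection $\nabla$ on $\pi^!A$; conversely every $\nabla$ extends uniquely to an invariant $\tilde\nabla$. Existence on either side follows from the partition-of-unity argument already noted, applied on the paracompact manifold $A^*$. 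I would also record that this correspondence intertwines curvatures, which is what is needed later for the Fedosov construction.

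For part~(i) the real content is that the leafwise symplectic form of $T^*_t\sfG$ descends to $\Theta$. Since $R$ is a Poisson map by \cite[Lemma~7]{nwx}, and since $\calF$ and $\pi^!A$ are the symplectic foliations of $T^*_t\sfG$ and of $A^*$ respectively (the latter by Lemma~\ref{id-lie}), the leafwise symplectic form $\omega_\calF\in\Omega^2_\calF$ is $\sfG$-invariant and corresponds under descent to the nondegenerate closed leafwise $2$-form on $\pi^!A$ that represents the Lie--Poisson structure of $A^*$. To check that this $2$-form is $\Theta$ on the nose---correct sign and normalization---I would exhibit the leafwise tautological $1$-form $\vartheta_\calF$ on $T^*_t\sfG$, note that it is $\sfG$-invariant and descends to the Liouville form $\vartheta\in\Gamma^\infty(A^*,(\pi^!A)^*)$ given by $\vartheta_\eta(\zeta,v)=\langle\eta,\zeta\rangle$ for $(\zeta,v)\in(\pi^!A)_\eta$, and verify with the Lie algebroid differential~\eqref{lie-alg-diff} that $d_{\pi^!A}\vartheta=\Theta$; since descent intertwines $d_\calF$ with $d_{\pi^!A}$, the form $\omega_\calF=d_\calF\vartheta_\calF$ maps to $\Theta$. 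Now $\tilde\nabla$ is Poisson iff $\tilde\nabla\omega_\calF=0$, and by the connection correspondence the left-hand side descends to $\nabla\Theta$; hence $\tilde\nabla$ is Poisson if and only if $\nabla\Theta=0$, i.e.\ $\nabla$ is symplectic.

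Part~(ii) runs in parallel. The fiberwise Euler (Liouville) vector field $\tilde\xi$ of $T^*_t\sfG\to\sfG$ is tangent to the leaves---over $\sfG_x$ it is the usual Liouville field of the cotangent bundle $T^*\sfG_x=T^*_t\sfG|_{\sfG_x}$---hence a section of $\calF$; it is $\sfG$-invariant because fiberwise dilations commute with the cotangent lifts of the right translations, and, $R$ being fiberwise linear, it is $R$-related to the Euler vector field $\xi$ of $A^*$, so under $\calF/\sfG\cong\pi^!A$ one has $\tilde\xi\mapsto\xi$. The homogeneity condition for $\tilde\nabla$---invariance under the fiberwise $\R^*$-action, equivalently the Lie-derivative identity along $\tilde\xi$---then descends term by term, using $\tilde\xi\mapsto\xi$, the correspondence of brackets and anchors, and the connection correspondence, to the stated homogeneity condition for $\nabla$, and conversely. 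The main obstacle throughout is the bookkeeping of the first two paragraphs together with the identification in the third: one must be careful that $\sfG$-invariant leafwise connections correspond exactly to $\pi^!A$-connections with matching Leibniz rules, and that the leafwise cotangent symplectic structure descends to $\Theta$ itself rather than to some other closed nondegenerate leafwise $2$-form; once the Liouville $1$-form is pinned down, the remainder is a formal transport of the analogous statements in \cite{ppt}.
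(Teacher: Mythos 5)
Your proposal is correct and follows essentially the same route as the paper: the paper's proof also transports connections through the identification $\calF|_{A^*}\cong\pi^!A$ of Lemma \ref{id-lie} (pulling back along $R$ in one direction, restricting to $A^*$ in the other) and then notes that the symplectic structure and Euler vector field on $\pi^!A$ are the natural restrictions of those on $\calF$, which is exactly the content you verify via the Liouville form and the invariance of the fiberwise Euler field. Your write-up simply spells out the invariant-section descent and the identification of $\Theta$ in more detail than the paper's brief argument.
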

\begin{proof}
By Lemma \ref{id-lie}, the pull-back $R^*\nabla$ of any $\pi^!A$-connection on $\pi^!A$ is a $\sfG$-invariant 
connection on $T^*_t\sfG$ along the leaves of $\calF$. Conversely, the restriction of any connection on 
$T^*_t\sfG$ along $\calF$ is a $\pi^!A$-connection on $\pi^!A$.
One easily checks that under the isomorphism of Lemma \ref{id-lie}, the symplectic structure and the Euler 
vector field are just the natural restrictions of the symplectic structure and the Euler vector field on $\calF$. 
With this observation, the proof is straightforward.
\end{proof}
Given a symplectic connection on $\pi^!A$, we can apply the Fedosov method using the connection $\tilde{\nabla}$ 
to obtain a $\sfG$-invariant star product on $T^*_t\sfG$, and we write $\mathscr{A}^\hbar_{T^*_t\sfG}$ for the 
resulting sheaf of noncommutative algebras.  Because of the $\sfG$-action on $\mathscr{A}^\hbar_{T^*_t\sfG}$, 
we can define
\[
  f\star_{A^*}g:=\left((R^*f)\star_{T^*_t\sfG} (R^*f)\right)\big|_{A^*}
\]
to define a star-product on $A^*$. Indeed, since $R$ is classically a Poisson map and everything is 
$\sfG$-equivariant, we easily see that this defines a deformation quantization of $A^*$ equipped with the 
Poisson bracket \eqref{poisson-dual-la}. We write $\mathscr{A}^\hbar_{A^*}$ for the resulting sheaf on $A^*$.

\begin{remark}
  Let us emphasize at this point that in the construction above, $\pi^!A$ served only as an auxiliary tool
  to construct a quantization over $A^*$, and that we neither construct nor use a quantization over 
  the symplectic Lie algebroid $\pi^!A$.
\end{remark}

As previously, we denote by $\calF\subset T(T^*_t\sfG)$ the symplectic foliation of the Poisson structure on 
$T^*_t\sfG$, and write $\Omega^\bullet_\calF(T^*_t\sfG)$ for the complex of leafwise differential forms equipped 
with the de Rham differential $d_\calF$. The subcomplex $(\Omega^\bullet_\calF(T^*_t\sfG),d_\calF)^\sfG$ can be 
interpreted as the Lie algebroid cohomology of a Lie algebroid over $A^*$, given by the restriction of 
$\calF$ to $A^*$.

With this, the algebraic index theorem is given by:
\begin{theorem}\label{thm:psi}
In the derived category of sheaves, the following diagram commutes:
\[
  \xymatrix{\Tot_\bullet\left(\mathcal{BC}(\mathscr{A}^\hbar_{A^*})\right)\ar[rr]^{\sigma}\ar[drr]_{\Psi}&&
  \Tot_\bullet(\mathcal{B}\Omega_{\pi^!A})\ar[d]^{\wedge\hat{A}(\pi^!A)e^{-\Omega\slash 2\pi\sqrt{-1}\hbar}}
\\
& &
\Tot_\bullet(\mathcal{B}\Omega_{\pi^!A})\otimes\C[\hbar^{-1},\hbar]]}
\]
\end{theorem}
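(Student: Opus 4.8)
The plan is to reduce Theorem \ref{thm:psi} to the regular Poisson case already stated as Theorem \ref{alg-ind-reg-poisson}, applied to the regular Poisson manifold $T^*_t\sfG$, and then pass to $\sfG$-invariants. First I would note that by Proposition \ref{lem:conn} the symplectic $\pi^!A$-connection $\nabla$ on $\pi^!A$ corresponds to a $\sfG$-invariant symplectic leafwise connection $\tilde{\nabla}$ on $\calF \subset T(T^*_t\sfG)$, and that the Fedosov construction carried out $\sfG$-equivariantly produces the sheaf $\mathscr{A}^\hbar_{T^*_t\sfG}$ together with all the auxiliary data (the curvature form $\Omega$, the cyclic trace density $\Psi$, the symbol map $\sigma$) in a $\sfG$-equivariant fashion. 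Since every ingredient in Theorem \ref{alg-ind-reg-poisson} for $P = T^*_t\sfG$ is built from this $\sfG$-invariant data, the commuting diagram of that theorem is a diagram of $\sfG$-equivariant morphisms of complexes of sheaves on $T^*_t\sfG$.

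Next I would take $\sfG$-invariants. The map $R : T^*_t\sfG \to A^*$ identifies $A^*$ with the quotient $T^*_t\sfG/\sfG$, and under this identification $\sfG$-invariant leafwise forms on $T^*_t\sfG$ correspond to $\Omega^\bullet_{\pi^!A}$ by Lemma \ref{id-lie} (the isomorphism $\calF|_{A^*} \cong \pi^!A$ together with the fact that $\pi^!A$-cohomology is computed by $(\Omega^\bullet_\calF(T^*_t\sfG), d_\calF)^\sfG$, as noted just before the statement). Likewise, $f \star_{A^*} g := \big((R^*f) \star_{T^*_t\sfG} (R^*g)\big)\big|_{A^*}$ exhibits $\mathscr{A}^\hbar_{A^*}$ as the invariant part of $R_*\mathscr{A}^\hbar_{T^*_t\sfG}$, so $\mathcal{BC}(\mathscr{A}^\hbar_{A^*})$ is the $\sfG$-invariant part of $\mathcal{BC}(\mathscr{A}^\hbar_{T^*_t\sfG})$. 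One then checks that under these identifications the invariant parts of $\sigma$ and $\Psi$ from Theorem \ref{alg-ind-reg-poisson} become exactly the maps $\sigma$ and $\Psi$ appearing in Theorem \ref{thm:psi}, and that the characteristic class $\hat{A}(\calF) e^{-\Omega/2\pi\sqrt{-1}\hbar}$, being built from the $\sfG$-invariant curvature of $\tilde{\nabla}$, restricts to $\hat{A}(\pi^!A) e^{-\Omega/2\pi\sqrt{-1}\hbar}$ under $\calF|_{A^*} \cong \pi^!A$. Since taking a filtered direct/inverse limit of the sheaf data and then invariants is exact enough to preserve quasi-isomorphisms (the relevant resolutions are the leafwise Fedosov/Koszul resolutions, which are $\sfG$-equivariant and behave well under restriction), commutativity in the derived category descends from $T^*_t\sfG$ to $A^*$.

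Finally I would record that the whole argument is "a simple adaption of \cite{ppt}" as the paragraph preceding Theorem \ref{alg-ind-reg-poisson} already signals: one repeats the construction of the cyclic trace density $\Psi^i_{2m}$ and the fundamental identity \eqref{fund-prop} verbatim, now for the leafwise complex on $T^*_t\sfG$, and then observes $\sfG$-equivariance at each step. The main obstacle — and the only place genuine care is needed — is verifying that passing to $\sfG$-invariants is compatible with the derived-category statement: one must ensure that the leafwise resolutions used to represent the cyclic complexes by quasi-isomorphic simpler objects can be chosen $\sfG$-equivariantly (which is automatic from the $\sfG$-invariance of the Fedosov connection $\tilde\nabla$), and that restriction to the slice $A^* \hookrightarrow T^*_t\sfG$ along $i$, which is a left inverse to $R$, does not destroy acyclicity of the kernels of $\Psi$ and $\sigma$. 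Because $R$ is a locally trivial $\sfG$-principal-type fibration and the sheaves involved are soft/fine, this compatibility holds, and the diagram of Theorem \ref{thm:psi} commutes.
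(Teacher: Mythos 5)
Your proposal is correct and follows essentially the same route as the paper: apply Theorem \ref{alg-ind-reg-poisson} to the regular Poisson manifold $T^*_t\sfG$, observe that the choice of a $\sfG$-invariant Fedosov connection (via Proposition \ref{lem:conn}) makes $\Psi$ and $\sigma$ $\sfG$-equivariant morphisms of complexes of sheaves, and then restrict to $\sfG$-invariants, where Lemma \ref{id-lie} identifies the invariant data with $\mathscr{A}^\hbar_{A^*}$ and $\Omega^\bullet_{\pi^!A}$. The additional care you take about exactness of the passage to invariants is reasonable but not elaborated in the paper's own (shorter) argument.
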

\begin{proof}
The theorem is a direct consequence of Theorem \ref{alg-ind-reg-poisson}, applied to the regular Poisson manifold 
$T^*_t\sfG$: the groupoid $\sfG$ acts on the sheaves $\mathscr{A}^\hbar_{T^*_t\sfG}$ and $\Omega^\bullet_\calF$ and its 
invariant section are resp. $\mathscr{A}^\hbar_{A^*}$ and the Lie algebroid cochain complex $\Omega^\bullet_{\pi^!A}$, 
because of Lemma \ref{id-lie}. The explicit formula for the morphism $\Psi$, cf. \cite[Def. 3.5.]{ppt} shows that 
it only depends on the choice of a Fedosov connection. Because we have chosen a $\sfG$-invariant connection, it 
follows that 
$\Psi: \Tot_\bullet(\mathcal{BC}(\mathscr{A}^\hbar_{T^*_t\sfG}))\to\Tot_\bullet( \mathcal{B}\Omega^\bullet_\calF)$ is a 
$\sfG$-equivariant morphism of chain complexes of  sheaves, and therefore restricts to invariants. 
With this observation, Theorem \ref{alg-ind-reg-poisson} gives the result.
\end{proof}
\begin{remark}{\ } 
\begin{itemize}
\item[$i)$] If we choose a homogeneous connection as in Proposition \ref{lem:conn}, we can, as in \cite{bnw}, make sure that the full Fedosov connection is homogeneous. It follows that the induced characteristic class $\omega\slash\hbar+\sum_{k=0}^\infty\hbar^k\Omega_k$ of Proposition \ref{class-def} $ii)$ satisfies $L_\xi\Omega_k=\Omega_k$ for all $k$. Since also $L_\xi\omega=\omega$, it follows that the characteristic class is zero in $H^2_{\rm Lie}(\pi^!A;\C)$, and does not contribute in the theorem above. This is important in the following, because the $\star$-product induced by the pseudodifferential calculus is homogeneous. 
\item[$ii)$]
  The algebraic index theorem for Poisson manifolds of \cite{tt,cfw} gives a somewhat different result when 
  applied to the dual of a Lie algebroid. It involves the ordinary de Rham complex instead of the Lie algebroid 
  cochain complex. Therefore, the $\hat{A}$-genus appearing in that theorem is the ordinary genus of the underlying 
  manifold. Here the $\hat{A}(\pi^!A)$ is quite different, for example it can be nontrivial even in the case 
  of the dual of a Lie algebra.
  \end{itemize}
\end{remark}

Finally, let us consider the cohomology of $\pi^!A$ and its structure.
Since $\pi:A^*\to M$ is a vector bundle, its fibers are contractible and  by \cite[Thm 2]{crainic} we immediately 
conclude:
\begin{corollary}\label{cor:alg-coh}
  The map $\pi^*:H^\bullet_\textup{Lie}(A)\to H^\bullet_\textup{Lie} (\pi^!A)$ is an isomorphism preserving the product 
  structure.
\end{corollary}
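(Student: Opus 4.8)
The plan is to realize $\pi^*$ as the map induced by a canonical morphism of Lie algebroids, note that such a map is automatically a homomorphism of graded algebras, and then deduce bijectivity from the contractibility of the fibers of $\pi$ via Crainic's comparison theorem for pull-back Lie algebroids.

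First I would recall that the pull-back Lie algebroid $\pi^!A$ carries a canonical morphism of Lie algebroids $p\colon\pi^!A\to A$ over $\pi\colon A^*\to M$, namely the projection $(\xi,X)\mapsto\xi$; that this is a morphism of Lie algebroids is built into the pull-back construction of \cite{mack}. Pulling back Lie algebroid forms along $p$ produces a morphism of Chevalley--Eilenberg complexes $p^*\colon(\Omega^\bullet_A,d)\to(\Omega^\bullet_{\pi^!A},d)$, which is the map we have denoted $\pi^*$. Since the Chevalley--Eilenberg differential is a graded derivation for the wedge product and pull-back along a Lie algebroid morphism commutes with wedge products, $p^*$ is a morphism of differential graded algebras; hence the induced map on cohomology $\pi^*\colon H^\bullet_\textup{Lie}(A)\to H^\bullet_\textup{Lie}(\pi^!A)$ is a homomorphism of graded rings. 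This already settles the assertion about the product structure, independently of whether $\pi^*$ is bijective.

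It then remains to prove that $\pi^*$ is an isomorphism of groups in each degree. Here I would invoke \cite[Thm 2]{crainic}: for a surjective submersion $\pi\colon P\to M$ with connected fibers, the pull-back map $H^\bullet_\textup{Lie}(A)\to H^\bullet_\textup{Lie}(\pi^!A)$ is an isomorphism in degrees $\le n$ and injective in degree $n+1$ whenever the fibers of $\pi$ are homologically $n$-connected. Since $\pi\colon A^*\to M$ is a vector bundle, each fiber is diffeomorphic to $\R^r$ and is therefore smoothly contractible, hence homologically $n$-connected for every $n$. Applying the theorem for all $n$ yields that $\pi^*$ is an isomorphism in every degree, and combined with the previous paragraph it is an isomorphism of graded rings.

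I expect no serious obstacle here: essentially all of the content is packaged in \cite[Thm 2]{crainic}, and the only facts to check by hand are that $p\colon\pi^!A\to A$ is a Lie algebroid morphism (immediate from \cite{mack}) and that the fibers $\R^r$ satisfy the connectedness hypothesis of Crainic's theorem (obvious). If one wished to bypass the citation, the natural alternative would be a direct fiberwise Poincar\'e-lemma argument along $\pi$, using the scaling retraction of $A^*$ onto its zero section, which is compatible with the anchor and Lie bracket of $\pi^!A$; the only mildly technical point would be to promote the fiberwise contraction to a chain homotopy on $(\Omega^\bullet_{\pi^!A},d)$, but this is unnecessary given the cited result.
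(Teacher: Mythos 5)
Your proposal is correct and follows essentially the same route as the paper: the paper likewise observes that the fibers of the vector bundle $\pi:A^*\to M$ are contractible and concludes immediately by citing \cite[Thm 2]{crainic}. Your additional remarks spelling out that $\pi^*$ comes from a Lie algebroid morphism and is therefore a map of differential graded algebras (hence multiplicative on cohomology) are accurate and simply make explicit what the paper leaves implicit.
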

Next we need to discuss duality.
\begin{lemma}
\label{lema:volume}
The tensor product
\[
  \Omega_{\pi^!A}:=\Theta^r\otimes\Omega\in
  {\bigwedge}^\textup{\!top} \,(\pi^!A) \otimes{\bigwedge}^\textup{\!top}A\otimes{\bigwedge}^\textup{\!top}T^*M
\]
defines a $\pi^!A$-invariant volume form in $L_{\pi^!A}$ on $A^*$.
\end{lemma}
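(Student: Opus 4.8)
The plan is to identify the line bundle $L_{\pi^!A}$ explicitly, to recognise $\Omega_{\pi^!A}$ as (up to a nonzero constant) the pullback $\pi^*\Omega$, and then to read off both assertions. Since $\pi:A^*\to M$ is a vector bundle, its vertical subbundle is canonically $\pi^*A^*$, so there are exact sequences $0\to\pi^*A^*\to TA^*\to\pi^*TM\to 0$ and $0\to\pi^*A^*\to\pi^!A\to\pi^*A\to 0$. Taking top exterior powers yields canonical isomorphisms $\bigwedge^{\textup{top}}T^*A^*\cong\pi^*\bigl(\bigwedge^{\textup{top}}A\otimes\bigwedge^{\textup{top}}T^*M\bigr)=\pi^*L$ and $\bigwedge^{\textup{top}}\pi^!A\cong\pi^*\bigl(\bigwedge^{\textup{top}}A^*\otimes\bigwedge^{\textup{top}}A\bigr)$; the latter is canonically trivial, and I write $\nu_0$ for its distinguished nowhere-vanishing section. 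Thus $L_{\pi^!A}=\bigwedge^{\textup{top}}T^*A^*\otimes\bigwedge^{\textup{top}}\pi^!A\cong\bigwedge^{\textup{top}}\pi^!A\otimes\pi^*L$, which is exactly the line bundle appearing in the statement, and $\Omega_{\pi^!A}=\Theta^r\otimes\Omega$ is a genuine section of it.

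Next I would compute $\Theta^r$ — understood, via the duality induced by the nondegenerate form $\Theta$ (equivalently, as the top power of the associated Poisson bivector), as a section of $\bigwedge^{\textup{top}}\pi^!A$ — and show $\Theta^r=\pm r!\,\nu_0$. On an open set where $A$ has a local frame $e_1,\dots,e_r$ with dual fibre coordinates $\xi_1,\dots,\xi_r$ on $A^*$, the bundle $\pi^!A$ is framed by lifts $\varepsilon_a$ of the $e_a$ together with the vertical fields $\delta_a=\partial/\partial\xi_a$, and the formula for $\Theta$ recalled in item $i)$ above gives $\Theta=\sum_a\varepsilon^a\wedge\delta^a$ up to terms involving only the covectors $\varepsilon^a$; since there are only $r$ such covectors, those extra terms cannot contribute to the top power, so $\Theta^r=r!\,\varepsilon^1\wedge\delta^1\wedge\cdots\wedge\varepsilon^r\wedge\delta^r$. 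Under the exact sequence for $\pi^!A$ the $\delta_a$ correspond to $e^a$, whence this equals $\pm r!$ times $\varepsilon_1\wedge\cdots\wedge\varepsilon_r\wedge\delta_1\wedge\cdots\wedge\delta_r$, which maps to $\pm r!\,\nu_0$ under the trivialisation (the element $e^1\wedge\cdots\wedge e^r\otimes e_1\wedge\cdots\wedge e_r$ being frame-independent). Hence $\Omega_{\pi^!A}=\pm r!\,\pi^*\Omega$ in $L_{\pi^!A}\cong\pi^*L$. In particular $\Omega_{\pi^!A}$ is nowhere vanishing: by the standing unimodularity assumption $\Omega$ is a nowhere-vanishing global section of $L$, so this is a volume form.

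For $\pi^!A$-invariance it now suffices to prove that $\pi^*\Omega$ is flat for the canonical $\pi^!A$-representation on $L_{\pi^!A}$. The crucial point is that, under the canonical identification $L_{\pi^!A}\cong\pi^*L$, this representation agrees with the $\pi^!$-pullback of the canonical (flat) $A$-representation on $L$. Concretely, one evaluates $\nabla_Y(\mu\otimes\nu)=\mathcal{L}_{\rho(Y)}\mu\otimes\nu+\mu\otimes\mathcal{L}^{\pi^!A}_Y\nu$ on the section $\mu_\Omega\otimes\nu_0$ corresponding to $\pi^*\Omega$: along vertical sections $Y$ the algebroid Lie derivative of $\nu_0$ produces exactly the fibrewise divergence cancelling $\mathcal{L}_{\rho(Y)}\mu_\Omega$, while along a lift of a section $\zeta$ of $A$ the remaining term is $\pi^*$ of the modular cocycle of $A$ with respect to $\Omega$. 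This cocycle vanishes identically because $\Omega$ is $\sfG$-invariant, hence $A$-invariant; therefore $\nabla_Y(\pi^*\Omega)=0$ for all $Y\in\Gamma(\pi^!A)$, and $\Omega_{\pi^!A}$ is $\pi^!A$-invariant. (Alternatively one may argue via Lemma \ref{id-lie}, tracing $\Omega_{\pi^!A}$ back through $R:T^*_t\sfG\to A^*$ to a $\sfG$-invariant object on the regular Poisson manifold $T^*_t\sfG$; that $\pi^!A$ is unimodular at all is in any case guaranteed a priori by Corollary \ref{cor:alg-coh}.) I expect this last step — the naturality of the canonical representation under $\pi^!$, i.e.\ the divergence cancellation — to be the only genuine point; everything else is bookkeeping with the two exact sequences.
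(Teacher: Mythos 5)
The paper states Lemma \ref{lema:volume} without any proof, so there is no argument of the authors' to measure you against; the closest the paper comes is the unproved assertion in Section \ref{sec:nonunimodular} that $\pi^*L_A$ is isomorphic to $\Theta^r\otimes L_{\pi^!A}$ as $\pi^!A$-representations and that $\Theta^r$ is invariant. Your argument is correct and in effect supplies exactly the justification that assertion presupposes. The two short exact sequences $0\to\pi^*A^*\to TA^*\to\pi^*TM\to 0$ and $0\to\pi^*A^*\to\pi^!A\to\pi^*A\to 0$ do give the canonical identifications $\bigwedge^{\textup{top}}T^*A^*\cong\pi^*L$ and $\bigwedge^{\textup{top}}\pi^!A\cong\pi^*(\bigwedge^{\textup{top}}A^*\otimes\bigwedge^{\textup{top}}A)$, hence $L_{\pi^!A}\cong\pi^*L$; the degree count showing that terms of $\Theta$ built only from the $\varepsilon^a$ cannot contribute to $\Theta^r$ is right, so $\Theta^r$ is indeed $\pm r!$ times the canonical trivializing section and $\Omega_{\pi^!A}$ is nowhere vanishing; and reducing invariance to flatness of $\pi^*\Omega$ for the Evens--Lu--Weinstein representation, which in turn reduces to the vanishing of the modular cocycle of $A$ with respect to the invariant $\Omega$, is the right mechanism. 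Two small caveats. First, as stated the lemma places $\Theta^r$ in $\bigwedge^{\textup{top}}\pi^!A$ although it naturally lives in $\bigwedge^{\textup{top}}(\pi^!A)^*$; your reading via the duality induced by the nondegenerate form $\Theta$ is surely the intended one, but it is worth saying explicitly that this is an interpretation of the statement. Second, the pivotal step --- that under $L_{\pi^!A}\cong\pi^*L$ the canonical $\pi^!A$-representation is the pullback of the canonical $A$-representation (your ``divergence cancellation'') --- is only sketched; it is a standard naturality statement for the modular class under pullback along a submersion and can be either computed in your local frame or cited, but note that your parenthetical appeal to Corollary \ref{cor:alg-coh} does not by itself yield it, since that corollary gives an isomorphism on cohomology and says nothing about where the modular class of $\pi^!A$ sits without precisely this naturality.
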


With this volume form, we can construct the following pairing
\[
  H^k_\textup{Lie}(\pi^!A)\times H^{2r-k}_{\rm Lie, c}(\pi^!A)\to\C,
\]
the $c$ stands for compact support, given by
\[
\left<\alpha,\beta\right>_{\pi^!A}:=\int_{A^*}\left<\alpha\wedge\beta,\Omega_{\pi^!A}\right>.
\]
Combining the pairing with the cyclic trace density, we obtain a morphism of cochain complexes 
$\Phi: \Omega^\bullet_\textup{Lie}(A;\C)\to \Tot^\bullet(\mathcal{BC}(\mathscr{A}^\hbar_{A^*,c}(A^*)))$ (the $c$ stands for compact support) given by
\begin{equation}
\label{ind-map}
  \Phi(\alpha)(a):=\left<\pi^*\alpha,\Psi(a)\right>_{\pi^!A}.
\end{equation}
The algebraic index theorem immediately shows:
\begin{corollary}\label{cor:phi}
For $\alpha\in H^\bullet_\textup{Lie}(A;\C)$, the following equality holds true in $HC^\bullet(\mathscr{A}^\hbar_{A^*,c}(A^*))$:
\[
  \Phi(\alpha)(a)=\int_{A^*}\left<\pi^*\alpha\wedge\hat{A}(\pi^!A)\wedge\sigma(a),\Omega_{\pi^!A}\right>.
\]
\end{corollary}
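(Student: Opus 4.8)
The plan is to deduce Corollary~\ref{cor:phi} directly from Theorem~\ref{thm:psi} by applying the pairing $\langle\pi^*\alpha\wedge(-),\Omega_{\pi^!A}\rangle_{\pi^!A}$ to the commuting diagram of the theorem. First I would take an arbitrary cyclic cycle $a\in\Tot_\bullet(\mathcal{BC}(\mathscr{A}^\hbar_{A^*,c}(A^*)))$ and observe that, by definition \eqref{ind-map}, $\Phi(\alpha)(a)=\langle\pi^*\alpha,\Psi(a)\rangle_{\pi^!A}$. Theorem~\ref{thm:psi} tells us that in the derived category $\Psi(a)$ and $\sigma(a)\wedge\hat{A}(\pi^!A)e^{-\Omega/2\pi\sqrt{-1}\hbar}$ represent the same class in $\Tot_\bullet(\mathcal{B}\Omega_{\pi^!A})\otimes\C[\hbar^{-1},\hbar]]$, so they differ by a $d_{\pi^!A}$-boundary (plus a $b+B$-boundary on the chain side, which is killed upon pairing with a cycle). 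Hence $\langle\pi^*\alpha,\Psi(a)\rangle_{\pi^!A}=\langle\pi^*\alpha,\sigma(a)\wedge\hat{A}(\pi^!A)e^{-\Omega/2\pi\sqrt{-1}\hbar}\rangle_{\pi^!A}$, using that $\pi^*\alpha$ is closed and Stokes' theorem for the leafwise/Lie algebroid complex $\Omega^\bullet_{\pi^!A}$ with the invariant volume form $\Omega_{\pi^!A}$ of Lemma~\ref{lema:volume} makes the boundary term vanish.

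Second I would dispose of the exponential factor $e^{-\Omega/2\pi\sqrt{-1}\hbar}$. Here I invoke Remark~(i) following Theorem~\ref{thm:psi}: since the $\star$-product coming from the pseudodifferential calculus is homogeneous, we may choose a homogeneous Fedosov connection, and then the characteristic class $\omega/\hbar+\sum_{k\ge0}\hbar^k\Omega_k$ satisfies $L_\xi\Omega_k=\Omega_k$ and $L_\xi\omega=\omega$, forcing it to be cohomologically trivial in $H^2_{\rm Lie}(\pi^!A;\C)$. Consequently $e^{-\Omega/2\pi\sqrt{-1}\hbar}$ is cohomologous to $1$ in $H^\bullet_{\rm Lie}(\pi^!A;\C[\hbar^{-1},\hbar]])$, and since it is multiplied against the closed forms $\pi^*\alpha$ and $\hat{A}(\pi^!A)$ inside the pairing, it can be dropped. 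What remains is exactly $\langle\pi^*\alpha\wedge\hat{A}(\pi^!A)\wedge\sigma(a),\Omega_{\pi^!A}\rangle$, which by definition of the duality pairing is $\int_{A^*}\langle\pi^*\alpha\wedge\hat{A}(\pi^!A)\wedge\sigma(a),\Omega_{\pi^!A}\rangle$, giving the claimed formula at the level of $HC^\bullet$.

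Third I would check that everything is well-defined on cohomology: the left side $\Phi(\alpha)$ is a morphism of complexes by construction (it factors through $\Psi$, which is a morphism of complexes of sheaves by Theorem~\ref{thm:psi}, followed by the pairing with the fixed closed form $\pi^*\alpha$), so it descends to $HC^\bullet$; and the right side depends only on the cohomology classes of $\pi^*\alpha$, $\hat{A}(\pi^!A)$ and $\sigma(a)$ by the same Stokes argument. One also has to note that $\sigma$ and $\Psi$ land in the compactly supported subcomplex because $a$ does, which is what makes the integral over the noncompact $A^*$ converge. The identity is then an equality in $HC^\bullet(\mathscr{A}^\hbar_{A^*,c}(A^*))$ as stated.

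The main obstacle is the passage from the commutativity of the diagram \emph{in the derived category of sheaves} to a genuine numerical identity of integrals over $A^*$: one must globalize the sheaf-level statement, i.e. pass from local (Čech) data to a global pairing, and verify that the boundary terms introduced when going around the derived-category zig-zag genuinely integrate to zero against the invariant volume form $\Omega_{\pi^!A}$. This is where the $\pi^!A$-invariance in Lemma~\ref{lema:volume} and the compact-support hypothesis are essential, and where a careful bookkeeping of the $\hbar^{-1}$-powers (so that the pairing takes values in $\C$, not merely $\C[\hbar^{-1},\hbar]]$, after extracting the appropriate coefficient) is required. I expect the rest to be a routine unwinding of definitions.
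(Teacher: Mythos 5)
Your proposal is correct and follows exactly the route the paper intends: the paper states the corollary as an immediate consequence of Theorem~\ref{thm:psi} applied inside the pairing \eqref{ind-map}, with exact terms killed by the closedness of $\pi^*\alpha$ and the $\pi^!A$-invariance of $\Omega_{\pi^!A}$, and the factor $e^{-\Omega/2\pi\sqrt{-1}\hbar}$ discarded via the homogeneity remark following that theorem. Your fleshing-out of the derived-category-to-integral passage and the compact-support bookkeeping is a faithful expansion of the same argument.
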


\section{The localized index theorem}
\label{sec:local-index}

In this section we prove the main theorem of this paper, which computes the localized index pairing of an 
elliptic operator with a Lie algebroid cocycle:
\begin{theorem}
\label{main-theorem}
Let $A\to M$ be a unimodular integrable Lie algebroid with an invariant volume form $\Omega$.
For any elliptic $D\in\calU(A;E)$ and $\alpha\in H^{2k}_\textup{Lie}(A;\C)$ the following identity holds true;
\[
  \llangle[\Ind(D)]_\textup{loc},\alpha\rrangle =\frac{1}{(2\pi\sqrt{-1})^k}\int_{A^*}
  \left<\pi^*\alpha\wedge\hat{A}(\pi^!A)\wedge {\rm ch}(\sigma(D)), \Omega_{\pi^!A}\right>.
\]
\end{theorem}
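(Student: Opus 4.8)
The plan is to identify the localized index pairing with the algebraic index pairing on the deformation quantization $\mathscr{A}^\hbar_{A^*}$ and then invoke Corollary~\ref{cor:phi}. First I would recall that, by the construction in Section~\ref{sec:local-pairing}, the localized index class $[\Ind(D)]_\textup{loc}$ is represented by a formal difference of idempotents $R=e_0-e_1$ built out of $\Op(e)$ and $D$, whose reduced kernels are supported arbitrarily close to the unit space. The characteristic map $\chi_\Omega$ paired with the Connes--Chern character of this idempotent is, by \eqref{eq:chi-uni}, an integral over $M$ of iterated convolutions; since everything is supported near $M$, this expression only depends on germs at the unit space and hence factors through the localized cochain complex $\hat{C}^\bullet_\textup{diff}(\sfG)$, which by Proposition~\ref{loc-lie} computes $H^\bullet_\textup{Lie}(A)$. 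This is what makes the left-hand side well-defined.

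Next I would pass to the asymptotic pseudodifferential calculus of Section~\ref{as-def}. Rescaling by $\hbar$ turns the operator $D$ and its parametrix $\Op(e)$ into an asymptotic symbol and its $\star_\textup{an}$-inverse modulo $\jsymb$, so the idempotent $R$ becomes, after applying $\sigma_\hbar$, a formal difference of idempotents in $\Mat_\infty(\mathscr{A}^\hbar_{A^*,c})$ whose principal symbol represents the class $\mathrm{ch}(\sigma(D))\in H^\textup{ev}_\textup{Lie}(\pi^!A)$ under the map $\sigma$. The trace $\tau_\Omega$ on $\calA$ becomes, via Proposition~\ref{prop:trace-dual} and \eqref{eq:trace-def}, the $\hbar$-dependent trace $\frac{1}{\hbar^r}\int_{A^*}(-)\,\Omega$ on $\mathscr{A}^\hbar_{A^*,c}$; one checks that the leading coefficient in $\hbar$ of the trace of the asymptotic idempotent reproduces exactly the untwisted trace of the original idempotent in $\calA$, so that the localized pairing $\llangle[\Ind(D)]_\textup{loc},\alpha\rrangle$ equals the algebraic index pairing $\Phi(\alpha)$ evaluated on the cyclic cycle built from $R$. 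Here one must keep careful track of the normalization $\frac{1}{(2\pi\sqrt{-1})^k}$, which comes from the conventions absorbed into $d\xi$ in the definition of $\Op$ together with the identification of the generating cyclic cocycle of the Weyl algebra; the homogeneity of the pseudodifferential $\star$-product guarantees, via Remark~(i) after Theorem~\ref{thm:psi}, that the characteristic class $\omega/\hbar+\sum\hbar^k\Omega_k$ drops out of the index formula, so no $e^{-\Omega/2\pi\sqrt{-1}\hbar}$ factor survives.

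Finally, I would apply Corollary~\ref{cor:phi} with $a$ the cyclic cycle coming from $R$: since $\sigma(a)=\mathrm{ch}(\sigma(D))$ in $H^\textup{ev}_\textup{Lie}(\pi^!A)$ and $\pi^*\alpha$ represents the pullback of $\alpha$ under the isomorphism of Corollary~\ref{cor:alg-coh}, the corollary gives precisely
\[
\Phi(\alpha)(a)=\int_{A^*}\left<\pi^*\alpha\wedge\hat{A}(\pi^!A)\wedge\mathrm{ch}(\sigma(D)),\Omega_{\pi^!A}\right>,
\]
and tracking the overall constant yields the stated formula. The main obstacle, I expect, is the second step: establishing rigorously that the localized index pairing computed with the \emph{finite} pseudodifferential calculus agrees with the algebraic index pairing on the \emph{formal} deformation $\mathscr{A}^\hbar_{A^*}$, i.e.\ that passing from $\Op$ to $\Op_\hbar$ and extracting the leading $\hbar$-term commutes with forming the idempotent and taking the trace, and that the support condition near $M$ is compatible with the sheaf-theoretic (germ-level) nature of the algebraic index theorem. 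This is essentially a compatibility-of-traces-and-Chern-characters argument in the spirit of \cite{ppt,conmos}, but it requires care with the asymptotic expansions and with the compact-support conditions on $A^*$.
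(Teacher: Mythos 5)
Your overall strategy is the one the paper actually follows: rescale to the asymptotic symbol calculus, use homogeneity of $\star_{\rm an}$ to kill the Fedosov characteristic class, compare traces, and conclude via the algebraic index theorem in the form of Corollary \ref{cor:phi}, with the $\hbar$-independence of the localized class justifying taking $\hbar\downarrow 0$. However, there is a genuine gap at the point where you assert that ``the localized pairing $\llangle[\Ind(D)]_\textup{loc},\alpha\rrangle$ equals the algebraic index pairing $\Phi(\alpha)$ evaluated on the cyclic cycle built from $R$.'' These two pairings are defined with $\alpha$ living in two different worlds: in the localized pairing, $\alpha$ is represented by a germ of a differentiable groupoid cochain $\varphi_0\otimes\cdots\otimes\varphi_{2k}$ whose components are inserted \emph{pointwise between the convolution factors} in the characteristic map \eqref{eq:chi-uni}, whereas in $\Phi(\alpha)$ of \eqref{ind-map} the class $\alpha$ enters as a Chevalley--Eilenberg form $\pi^*\alpha$ wedged against the trace density $\Psi$. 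Identifying the two is precisely the nontrivial content of the paper's Sections \ref{homogeneous}--\ref{sec:another-ch}: one rewrites $\chi$ in homogeneous (Alexander--Spanier) form (Proposition \ref{pairing-hom}), absorbs the functions $\varphi_i$ into the star product via Lemma \ref{module-op}, replaces $\tau_\Omega$ by the trace-density trace $\tr_\Omega$ (Proposition \ref{comp-trace}), and then, crucially, proves via the auxiliary map $\mathfrak{X}$ of \eqref{eq:frakX} that $[\mathfrak{X}(\psi)(a)]=[\Psi(a)]\cup[E(\psi)]$ in $H^\bullet_\textup{Lie}(\pi^!A)$ (Proposition \ref{prop:frakX}), which is where the van Est image $E(\varphi)=\pi^*\alpha$-factor actually emerges. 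Your proposal contains no argument for this step, and it does not follow formally from Corollary \ref{cor:phi}, whose input is already a Lie algebroid form.

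The obstacle you do flag --- compatibility of the finite pseudodifferential calculus with the formal deformation, and of supports with the sheaf-level statement --- is real but comparatively mild: the paper handles it by observing that the localized $K$-theory class of $\Op_\hbar(a)$ is independent of $\hbar$, so the pairing may be evaluated asymptotically, and the resulting integral is $\hbar$-independent. By contrast, the interleaving of the groupoid cochain with the idempotent entries, and its conversion into the cup product with $E(\varphi)$ at the level of the trace density (which in the paper requires the $\sfG$-invariant averaging argument on $T^*_t\sfG$ and the fiberwise comparison from \cite{ppt}), is the heart of the proof and is missing from your outline. Your second-paragraph claim about the ``leading coefficient in $\hbar$ of the trace'' is also not quite the comparison needed: what the paper uses is the equality $\tau_\Omega=\tr_\Omega=\tau_\Omega\circ\calE_{A^*}$ of Proposition \ref{comp-trace}, i.e.\ agreement of the symbolic trace with the trace-density trace and its invariance under the Fedosov equivalence, not an extraction of a leading term.
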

\subsection{Analytic vs. algebraic quantization}
Recall that a longitudinal pseudodifferential operator is actually a family of invariant pseudodifferential 
operators on the $t$-fibers of a Lie groupoid. As such, instead of taking the symbol on $A^*$, we can also consider the invariant family of symbols on $T^*_t\sfG$. For symbols of order $-\infty$, this is compatible with the correspondence between the invariant family of kernels and the reduced kernel:
\[
\xymatrix{
\symb^{-\infty}(A^*)\ar[r]^{R^*}\ar[d]_{k_{\Op}}&\symb^{-\infty}(T^*_t\sfG)^\textup{inv}\ar[d]^{\tilde{k}_{\Op}}\\
\calA\ar[r]_{k\mapsto\tilde{k}}&\calC^\infty_\textup{c}(\sfG^{\times_t 2}).}
\]
By the same arguments as in Section \ref{as-def}, the asymptotic symbol calculus on $T^*_t\sfG$ leads to a deformation quantization of the invariant functions on $T^*_t\sfG$. This means that the coefficients $c_k$ in product $\star_\textup{an}$ are invariant bidifferential operators, and clearly the star product extends to $\calC^\infty(T^*_t\sfG)$. We write $\mathscr{S}^\hbar_{T^*_t\sfG}$ for the resulting sheaf of noncommutative algebras. 

In later computations we shall need the following fact:
\begin{lemma}
\label{module-op}
For $a\in \mathbb{A}^\infty(T^*_t\sfG)$ and $f\in \calC^\infty(\sfG)$, we have
\[
\sigma_\hbar\left( f \circ \Op_\hbar(a)\right)\sim f\star_\textup{an} a.
\]
\end{lemma}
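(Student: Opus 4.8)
The plan is to reduce the statement to a bookkeeping computation inside the pseudodifferential calculus on the $t$-fibers, exploiting the fact that multiplication by $f\in\calC^\infty(\sfG)$ is a pseudodifferential operator of order $0$ whose full symbol is simply $f$ itself (in the fiberwise calculus on $T^*_t\sfG$, $f$ acts as the zeroth order operator $\tilde{k}\mapsto f\tilde{k}$, so its complete symbol is the function $f$ pulled back to $T^*_t\sfG$, independent of the cotangent variable). First I would recall that, by definition of the analytic product, $\sigma_\hbar(f\circ\Op_\hbar(a)) = \sigma_\hbar\big(\Op_\hbar(f)\circ\Op_\hbar(a)\big)$ for $\hbar>0$, where $\Op_\hbar(f)$ is just multiplication by $f$; and at $\hbar=0$ both sides equal the pointwise product $f\cdot a(-,-,0)$. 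So the content of the lemma is entirely about the asymptotic expansion in $\hbar>0$.

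Next I would invoke the composition formula \eqref{eq:prod-as}: since $f$ is $\xi$-independent, the standard pseudodifferential composition asymptotics degenerate. Concretely, the symbol of a composition $\Op_\hbar(b)\circ\Op_\hbar(a)$ has an expansion whose terms are given by the usual Leibniz-type series $\sum_\alpha \frac{\hbar^{|\alpha|}}{\alpha!}\,\partial_\xi^\alpha b\cdot D_x^\alpha a$ modulo lower-order corrections coming from curvature of the connection $\nabla$ (this is exactly the mechanism behind \eqref{eq:prod-as}); when $b=f$ has $\partial_\xi^\alpha f = 0$ for $\alpha\neq 0$, only the $\alpha=0$ term and the connection-type corrections survive, and by construction these corrections are precisely what the Fedosov $\star_\textup{an}$-product records. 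Thus $\sigma_\hbar(f\circ\Op_\hbar(a))$ and $f\star_\textup{an}a$ have the same asymptotic expansion term by term. I would phrase this as: both sides are determined by the same universal formula applied to the pair $(f,a)$, and that universal formula is $\star_\textup{an}$ by the very definition of $\star_\textup{an}$ as the asymptotic expansion of $\sigma_\hbar\circ(\Op_\hbar(\cdot)\circ\Op_\hbar(\cdot))$ on the quotient $\mbA_\infty$.

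The main obstacle is that $f$ lives on all of $\sfG$ rather than on a small neighbourhood of $M$, so one must check that multiplication by $f$ genuinely lies in the uniformly-supported longitudinal calculus $\Psi^0(\sfG)$ and that the asymptotic composition formula applies to it with no support restriction — i.e. that multiplication by an arbitrary smooth function is a legitimate zeroth-order element and the asymptotic product $\circ$ is defined for it. This is handled by observing that $\circ$ and $\sigma_\hbar$ are local in the base and in the fiber direction involve only the cut-off $\phi$ near the diagonal, so the global behaviour of $f$ away from the diagonal is irrelevant; the computation is entirely local and reduces to the standard symbol-of-a-product estimates of \cite{P1}, exactly as in Section \ref{as-def}. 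Once this is granted, the identification of the two expansions is the routine pseudodifferential calculation I sketched above.
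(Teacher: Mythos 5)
Your argument is correct and rests on the same mechanism as the paper's proof: the quantization underlying $\star_\textup{an}$ is standard (normal) ordered, so a left factor depending only on the base produces no derivative corrections. The paper's own proof is shorter and more direct: from the explicit formula for the quantization one reads off the \emph{exact} operator identity $f\circ\Op_\hbar(a)=\Op_\hbar(fa)$ (the symbol enters the formula for $\Op$ only through its value over the output point, so post-composition with multiplication by $f$ just multiplies the symbol), and then normal ordering gives $fa=f\star_\textup{an}a$ on the nose; your route through the general composition asymptotics with a $\xi$-independent left factor reaches the same conclusion, only less economically. Two glosses in your write-up should be corrected, though neither is a fatal gap. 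First, the surviving terms are not ``connection-type corrections that the Fedosov product records'': $\star_\textup{an}$ is the analytic, standard-ordered product of the symbol calculus, the Fedosov quantization $\mathscr{A}^\hbar$ enters only later through the equivalence $\calE$, and in fact for a base function $f$ there are no correction terms at all, i.e.\ $f\star_\textup{an}a=fa$ exactly. Second, the ``main obstacle'' you raise about the uniformly supported invariant calculus $\Psi^0(\sfG)$ is beside the point: multiplication by a general $f\in\calC^\infty(\sfG)$ is not an invariant family, so it does not live in $\Psi^0(\sfG)$ anyway, but the lemma is a statement in the fiberwise (family) calculus on $T^*_t\sfG$, where $\Op_\hbar(f)$ is precisely multiplication by $f$ (the $\xi$-integration yields a delta at the unit, where the cutoff equals one) and its kernel sits on the diagonal, so no support or invariance issue arises.
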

\begin{proof}
From the formula for $\Op(a)$, it is clear that $f\circ \Op(a)=\Op(f a)$. But since the star product $\star_\textup{an}$ is normal ordered, it follows that $fa=f\star_\textup{an} a$, for $f\in \calC^\infty(\sfG)$.
\end{proof}
Next, we need to compare this star-product with the ones coming from Fedosov's quantization.
\begin{proposition}
On $T^*_t\sfG$, there exists a $\sfG$-invariant Poisson connection $\nabla$ together with a $\sfG$-equivariant equivalence of $\star$-products 
\[
\calE_{T^*_t\sfG}:\mathscr{A}^\hbar_{T^*_t\sfG}\to \mathscr{S}^\hbar_{T^*_t\sfG}.
\]
\end{proposition}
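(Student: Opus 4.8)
The plan is to show that the analytic star-product $\star_{\textup{an}}$ on $T^*_t\sfG$ satisfies the hypothesis of Proposition~\ref{class-def}~$i)$, namely that its bidifferential operators are leafwise (along $\calF$) and that $\calF$-basic functions act by ordinary multiplication. The first point follows from Lemma~\ref{module-op}: since the calculus on $T^*_t\sfG$ is a family of pseudodifferential calculi on the symplectic leaves $T^*\sfG_x$, the coefficients $c_k$ are differential only along the fibers of $\tilde t$, i.e.\ along $\calF$. The second point is precisely the statement that $\star_{\textup{an}}$ is normal-ordered: for $f\in\calC^\infty(\sfG)$, viewed as a fiberwise-constant (hence $\calF$-basic) function on $T^*_t\sfG$, Lemma~\ref{module-op} gives $f\star_{\textup{an}}a=fa$. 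Thus $\star_{\textup{an}}$ is of the type classified by Proposition~\ref{class-def}, and there exists a Fedosov connection along the leaves of $\calF$ together with an equivalence $\calE$ to the corresponding Fedosov star-product, whose coefficients $\calE_k$ lie in $\calU(\calF)$.

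The next step is to make this $\sfG$-equivariant. Here I would use that the whole asymptotic pseudodifferential calculus on $T^*_t\sfG$ is built from $\sfG$-invariant families of operators, so $\star_{\textup{an}}$ is a $\sfG$-invariant star-product; concretely, right translation $R_g$ intertwines the calculi on $\sfG_{s(g)}$ and $\sfG_{t(g)}$, so $R_g^*(a\star_{\textup{an}}b)=R_g^*a\star_{\textup{an}}R_g^*b$. One then needs the $\sfG$-invariant refinement of Proposition~\ref{class-def}~$i)$: the Fedosov data can be chosen $\sfG$-invariantly. For the connection this is Proposition~\ref{lem:conn} (a $\sfG$-invariant Poisson connection $\tilde\nabla$ on $T^*_t\sfG$ along $\calF$ is the same as a symplectic $\pi^!A$-connection $\nabla$ on $\pi^!A$, which exists by a partition-of-unity argument on $M$); the Fedosov iteration producing the element $A\in\Omega^1_\calF(\calW_\calF)$ is canonical and hence preserves $\sfG$-invariance, and so does the equivalence $\calE$ constructed in the proof of \cite[Thm~4.1]{nt-hol}, because all the choices involved (the homotopy operators inverting the leafwise de Rham differential, etc.) can be taken invariantly once the connection is invariant. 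This yields the desired $\sfG$-equivariant equivalence $\calE_{T^*_t\sfG}:\mathscr{A}^\hbar_{T^*_t\sfG}\to\mathscr{S}^\hbar_{T^*_t\sfG}$.

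I expect the main obstacle to be the careful bookkeeping in the second step: namely, verifying that every step of the Nest--Tsygan argument behind Proposition~\ref{class-def}~$i)$ can be carried through $\sfG$-equivariantly. The proof of \cite[Thm~4.1]{nt-hol} proceeds order by order in $\hbar$, at each stage solving a cohomological equation in the leafwise complex using a contracting homotopy; one must check that these homotopies, and the resulting differential operators $\calE_k$, depend only on the $\sfG$-invariant connection and the $\sfG$-invariant normal-ordering data, so that invariance of the input propagates to invariance of $\calE$. Equivalently, one may invoke Proposition~\ref{class-def}~$i)$ directly on $T^*_t\sfG$ and then push the resulting connection and equivalence through the quotient $R:T^*_t\sfG\to A^*$, but invariance is cleanest to establish upstream. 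Once invariance is in hand, the statement follows, and (as recorded in the first Remark after Theorem~\ref{thm:psi}) the homogeneity of $\star_{\textup{an}}$ lets one furthermore arrange $\nabla$ homogeneous, which will be used later.
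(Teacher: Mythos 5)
Your first step coincides with the paper's: both verify that $\star_{\textup{an}}$ multiplies $\calF$-basic functions trivially and then invoke Proposition~\ref{class-def}~$i)$. Where you diverge is the invariance step. The paper does not equivariantize the Nest--Tsygan construction at all: it appeals to the classification statement, Proposition~\ref{class-def}~$ii)$, together with the well-known fact that the characteristic class of the symbol-calculus quantization vanishes, and on this basis \emph{chooses} a $\sfG$-invariant Fedosov quantization (built from an invariant connection as in Proposition~\ref{lem:conn}) admitting an equivalence to $\star_{\textup{an}}$. Your route instead re-runs the constructive proof of part~$i)$ equivariantly, arguing that once the connection is invariant the Fedosov iteration and the homotopy operators are canonical (or can be averaged using freeness and properness of the right $\sfG$-action), so the equivalence comes out invariant. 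Both are viable: the paper's argument is shorter but leans on the ``well-known'' vanishing of the class and is itself terse about why the resulting equivalence is $\sfG$-equivariant --- precisely the bookkeeping you flag --- whereas your argument addresses that point head-on at the cost of an order-by-order check; it also does not need the characteristic class here (it only resurfaces, as you note, in the homogeneity remark after Theorem~\ref{thm:psi}). The cleanest implementation of your idea is the one you mention in passing: since invariant data on $T^*_t\sfG$ are exactly data on $A^*$ for the symplectic Lie algebroid $\pi^!A$, one can apply the classification of \cite{nt-hol} (which is stated for symplectic Lie algebroids) directly downstairs and pull back along $R$, making invariance automatic.

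Two small inaccuracies, neither fatal. First, a function $f\in\calC^\infty(\sfG)$ pulled back to $T^*_t\sfG$ is constant along the cotangent fibers but is \emph{not} $\calF$-basic in general; the $\calF$-basic functions are (locally) pullbacks from $M$. The implication you need goes the other way: $\calF$-basic functions are among the fiberwise-constant ones, so Lemma~\ref{module-op} (normal ordering) indeed gives $f\star_{\textup{an}}g=fg$ for them, and the hypothesis of Proposition~\ref{class-def} holds --- but the parenthetical ``hence $\calF$-basic'' should be deleted. Second, the leafwise nature of the bidifferential operators $c_k$ does not follow from Lemma~\ref{module-op}; it follows, as you in fact say in the same sentence, from the calculus being a $\sfG$-invariant family of pseudodifferential calculi along the $t$-fibers, which is how the paper sets up $\mathscr{S}^\hbar_{T^*_t\sfG}$ in the first place.
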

\begin{proof}
The star product of the symbol calculus clearly has the property that for any function $f$ that is constant along the fibers $f\star_{\rm an} g=fg$. Therefore $\star_{\rm an}$ satisfies the condition of Proposition \ref{class-def}, so there is an equivalence to some Fedosov-type star product. However, it is well-known that the characteristic class of the deformation quantization given by the symbol calculus equals zero, so by Proposition \ref{class-def} $ii)$ we can choose a $\sfG$-invariant Fedosov quantization together with an equivalence. This proves the statement.
\end{proof}

Restricting to $\sfG$-invariants, this defines an equivalence $\calE_{A^*}:\mathscr{A}^\hbar_{A^*}\to \mathscr{S}^\hbar_{A^*}$ making the following diagram commute:
\[
\xymatrix{
\mathscr{A}^\hbar_{A^*}\ar[r]^{\calE_{A^*}}\ar[d]_{R^*}&\mathscr{S}^\hbar_{A^*}\ar[d]^{R^*}\\
\mathscr{A}^\hbar_{T^*_t\sfG}\ar[r]^{\calE_{T^*_t\sfG}}&\mathscr{S}^\hbar_{T^*_t\sfG}}
\]
Finally, we can impose compatibility with the Euler vector field: the star product $\star_{\rm an}$ is known to be homogeneous, meaning that 
\[
\Xi:=\hbar\frac{\partial}{\partial\hbar}+L_\xi
\]
defines a derivation of $(\mathscr{S}^\hbar_{A^*},\star_{\rm an})$. As a consequence, $\star_{\rm an}$ restricts to 
the functions in $\calC^\infty(A^*)$ that are polynomial on the fiber. This subalgebra is nothing but the 
universal enveloping algebra $\calU(A)$, a quantization of $\Gamma^\infty(M,\symm (A))$ via the 
Poincar\'e--Birkhoff--Witt map.

Using Proposition \ref{lem:conn} we can obtain homogeneous Poisson connections on $T^*_t\sfG$ from homogeneous Lie algebroid connections. As shown in \cite[\S 3]{bnw}, one can choose the terms in the Fedosov iteration in such a way that the resulting Fedosov star product is homogeneous in the sense above. The resulting equivalence then commutes with the homogeneity operator:
\begin{equation}
\label{com-hom}
[\calE,\Xi]=0.
\end{equation}
\subsection{A comparison of traces}
Assume $\sfG$ is unimodular, and choose an invariant transverse measure $\Omega$. Restricted to the Hochschild subcomplex, the map $\Psi$ defines in degree $2r$ a morphism of sheaves
\[
\Psi^{2r}_{2r}:\calC_{0}(\mathscr{A}^\hbar_{A^*})\to \Omega^{2r}_{\pi^!A}\otimes\C[\hbar^{-1},\hbar]],
\]
satisfying
\[
\Psi^{2r}_{2r}(a\star b-b\star a)= d\mbox{-exact},
\]
by Eq. \eqref{fund-prop}.
It follows that the formula
\[
\tr_\Omega(a):=\int_{A^*}\Psi^{2r}_{2r}(a)\Omega_{\pi^!A}
\]
defines a trace on $\mathscr{A}^\hbar_{A^*}$.
\begin{proposition}
\label{comp-trace}
We have the following equality of traces:
\[
\tau_\Omega\circ \calE_{A^*}=\tau_\Omega=\tr_\Omega.
\]
\end{proposition}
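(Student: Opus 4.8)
The plan is to identify all three functionals as $\C[\hbar^{-1},\hbar]]$-linear traces on the single algebra $\mathscr{A}^\hbar_{A^*}$ and then to fix the normalisation using homogeneity. To begin, $\tau_\Omega$ is a trace on $\mathscr{S}^\hbar_{A^*}$ by Proposition~\ref{prop:trace-dual} together with \eqref{eq:trace-def}, so $\tau_\Omega\circ\calE_{A^*}$ is a trace on $\mathscr{A}^\hbar_{A^*}$ because $\calE_{A^*}$ is an isomorphism of algebras; and $\tr_\Omega$ is a trace on $\mathscr{A}^\hbar_{A^*}$ by the fundamental property \eqref{fund-prop}, as already noted just before the statement. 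Everything therefore reduces to showing $\tau_\Omega\circ\calE_{A^*}=\tr_\Omega$ and that this common functional is $a\mapsto\frac{1}{\hbar^r}\int_{A^*}a\,\Omega$.

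For the identity $\tau_\Omega\circ\calE_{A^*}=\tr_\Omega$ I would work $\sfG$-equivariantly on the regular Poisson manifold $T^*_t\sfG$. Both functionals are obtained by integrating against $\Omega$ over $A^*$ a leafwise ``trace density'' of the leafwise-symplectic Fedosov quantization $\mathscr{A}^\hbar_{T^*_t\sfG}$ along $\calF$: for $\tr_\Omega$ this density is $\Psi^{2r}_{2r}$, while for $\tau_\Omega\circ\calE_{A^*}$ it is the pullback along $\calE_{T^*_t\sfG}$ of the trace density of the symbol calculus (which in the $A^*$-picture is essentially $a\mapsto\int_{A^*_x}a\,d\xi$). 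By the $\sfG$-equivariant, leafwise version of the uniqueness of traces for Fedosov quantizations --- the same argument as in \cite{ppt} and \cite{nt:family}, now run along the leaves of $\calF$ --- any two such leafwise trace densities differ by multiplication with a basic function, which after restriction to $\sfG$-invariants is a $\sfG$-invariant $c\in\calC^\infty(M)[\hbar^{-1},\hbar]]$. So it remains to prove $c\equiv 1$, since then the two $\Omega$-weighted integrals coincide.

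Finally I would show $c\equiv1$ in two steps. First, comparing leading orders in $\hbar$: the Hochschild--Kostant--Rosenberg map built into $\Psi$ identifies the $\hbar$-leading part of the Fedosov trace density with integration of the classical part against the leafwise symplectic volume, which agrees with the leading part of the symbol-calculus density and is unchanged by $\calE_{T^*_t\sfG}=\mathrm{id}+O(\hbar)$; hence $c=1+O(\hbar)$ pointwise on $M$. Second, homogeneity removes the remaining terms: since $\star_\textup{an}$ is homogeneous and, by the homogeneous Fedosov connection of \cite{bnw} together with \eqref{com-hom}, the maps $\Psi^{2r}_{2r}$, the symbol-calculus density, and $\calE_{T^*_t\sfG}$ all intertwine $\Xi=\hbar\frac{\partial}{\partial\hbar}+L_\xi$ with $\hbar\frac{\partial}{\partial\hbar}$ on basic functions (on which $L_\xi$ acts trivially), comparing $\Xi$-equivariance of the two leafwise densities forces $\hbar\frac{\partial}{\partial\hbar}c=0$, so $c$ is $\hbar$-independent and equals its leading value $1$. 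Unwinding, $\tr_\Omega(a)=\frac{1}{\hbar^r}\int_{A^*}a\,\Omega=\tau_\Omega(a)$, which is the asserted chain of equalities. I expect the genuinely delicate point to be the uniqueness step: one must check that fixing the invariant transverse density really rigidifies the trace down to a single $\sfG$-invariant function on $M$, with no fibrewise-varying ambiguity left --- this is precisely where the $\sfG$-equivariant adaptation along $\calF$ of the Fedosov--Nest--Tsygan uniqueness argument is needed, whereas the subsequent homogeneity bookkeeping is routine.
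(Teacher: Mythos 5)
Your proposal is correct in substance and follows the same basic strategy as the paper: pass $\sfG$-equivariantly to the regular Poisson manifold $T^*_t\sfG$, compare the two leafwise trace densities along $\calF$, and use invariance plus the pairing with $\Omega_{\pi^!A}$ to conclude equality of the functionals on $A^*$. Where you diverge is in how the residual ambiguity is killed. The paper argues fiberwise: since Liouville integration is a trace for both products on each leaf $T^*\sfG_x$ (and, for the comparison with $\Psi^{2r}_{2r}$, by \cite[Prop.~6.8]{ppt}, arguing as in \cite[Thm.~5.4]{bnpw}), the difference of densities is $d_\calF$-exact on each fiber; the primitive is then averaged to be $\sfG$-invariant, descends to an exact form in $\Omega^\bullet_{\pi^!A}$, and pairs to zero with the invariant $\Omega_{\pi^!A}$. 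You instead quote a $\sfG$-equivariant uniqueness statement for leafwise trace densities, reducing to a basic scalar $c$, and pin down $c\equiv 1$ by leading order plus homogeneity via $\Xi$ and \eqref{com-hom}, in the spirit of \cite{bnw}; this is a legitimate alternative way of fixing the normalization that the paper handles more implicitly. One point you should state explicitly: trace densities are only unique up to a basic multiple \emph{and} $d_\calF$-exact terms, and the exact terms do not vanish pointwise but only after integration against the invariant transverse density --- i.e.\ exactly the averaging-and-Stokes step ($\langle d_{\rm Lie}\alpha,\Omega_{\pi^!A}\rangle=0$) that the paper spells out and that uses unimodularity; your formulation in terms of uniqueness of the leafwise \emph{traces} absorbs this, but as written ("differ by multiplication with a basic function") it slightly overstates the rigidity of the densities themselves.
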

\begin{proof}
We first prove $\tau_\Omega \circ \calE_{A^*}=\tau_\Omega$. This is analogous to \cite[Thm. 5.4]{bnpw}. For $x\in \sfG_0$, over the $t$-fiber of the projection $T^*_t\sfG\to \sfG_0$, the integration with respect to the Liouville volume form is a trace on both $\mathscr{A}^\hbar_{T^*_t\sfG}|_{t^{-1}(x)}$ and $\mathscr{S}^\hbar_{T^*_t\sfG}|_{t^{-1}(x)}$. This implies that for $a\in \Gamma(\mathscr{S}^\hbar_{A^*})$, $\calE_{T^*_t\sfG}(R^*(a))|_{t^{-1}(x)}-R^*(a)|_{t^{-1}(x)}$ is a coboundary $d\alpha(a)$ on $t^{-1}(x)$. As both $\calE_{T^*_t\sfG}(R^*(a))$ and $R^*(a)$ are $\sfG$-invariant, the form $\alpha(a)$ can be made invariant under the $\sfG$ action on $T^*_t\sfG$, and therefore is the pullback of a section of $\wedge^{2r-1} (\pi^!A)^*$.  Accordingly, $\calE_{A^*}(a)-a$ is an exact form in the Lie algebroid cochain complex of $\pi^!(A)$. As the pairing $\langle d_{\textup{Lie}}(\alpha(a)), \tilde{\Omega}\rangle=0$, we conclude that 
\[
\tau(\calE_A^*(a))=\langle \calE_A^*(a), \Omega_{\pi^!A}\rangle=\langle \Psi_{2r}(a), \Omega_{\pi^!A}\rangle=\tau(a). 
\] 
Similar arguments as above also prove $\tr_\Omega=\tau_\Omega$. Over each fiber $t^{-1}(x)$, it is proved in \cite[Prop. 6.8]{ppt} that $\Psi_{2r}(R^*(a))-a$ is an exact form $d\alpha(a)$. As above
it follows that 
\[
\tr_\Omega(a)=\langle \Psi_{2r}(a), \Omega_{\pi^!A}\rangle=\langle a, \Omega_{\pi^!A}\rangle=\tau_\Omega(a). 
\] 
This completes the proof.
\end{proof}

\subsection{Homogeneous cochains}
\label{homogeneous}
Recall that the cohomology of groups can be computed from both the homogeneous or inhomogeneous cochain complex. The groupoid version of the inhomogeneous complex was considered in Section\ref{diff-coh}. Here we describe the corresponding homogeneous version. We put
\[
  \sfG^{\times_{\!t}k}:=\{(g_1,\ldots,g_k)\in\sfG^k \mid t(g_1)=\ldots =t(g_k)\}.
\]
This space carries an action of $\sfG$ by 
%$h\cdot(g_1,\ldots,g_k)=(hg_1,\ldots,hg_k)$, 
$(g_1,\ldots,g_k) \cdot h=(g_1 h ,\ldots,g_k h)$, 
and we define 
$D^k_\textup{diff}(\sfG):=\calC^\infty_\textup{inv}(\sfG^{\times_{\!t} (k+1)})$. There is an isomorphism 
$C^k_\textup{diff}(\sfG)\cong D^k_\textup{diff}(\sfG)$ which sends a cochain 
$\varphi\in D^k_\textup{diff}(\sfG)$ to $\tilde{\varphi}\in C^k_\textup{diff}(\sfG)$ defined by
\[
\begin{split}
  \tilde{\varphi}(g_1,\ldots,g_k):=  \varphi \big(g_1\cdots g_k, g_2\cdots g_k,\ldots & , g_k, 1_{t(g_k)}\big),
\end{split}
\]
where  $(g_1,\ldots,g_k)\in\sfG_k$.
Conversely, an element $\psi\in C^k_\textup{diff}(\sfG)$ is mapped to $\bar{\psi}\in D^k_\textup{diff}(\sfG)$ given by
\[
\bar{\psi}(g_0,\ldots,g_k):=\psi(g_0 g_1^{-1} ,\ldots,g_{k-1} g_k^{-1} ),\quad (g_0,\ldots,g_k)\in\sfG^{\times_{\! t}(k+1)}.
\]
Under this isomorphism, the cosimplicial structure on $C^\bullet_\textup{diff}(\sfG)$ is transferred to $D^\bullet_\textup{diff}(\sfG)$ by
\begin{equation}
\label{Eq:cosimp}
  d_i\psi(g_0,\ldots,g_k):=\psi(g_0,\ldots,\hat{g}_i,\ldots,g_k),\quad\mbox{for}~0\leq i\leq k.
\end{equation}
In the same way, the cyclic structure on $D^\bullet_\textup{diff}(\sfG)$ is given by
\[
\tau_k\psi(g_0,\ldots,g_k):= \psi(g_k,g_0,\ldots,g_{k-1}).
\]
Taking the alternating sum $d:=\sum_i(-1)d_i$, we therefore obtain a complex $(D^\bullet_\textup{diff}(\sfG),d)$ 
computing differentiable groupoid cohomology. Likewise, the unit space $M$ is diagonally embedded in 
$\sfG^{\times_{\!t}k}$ and localizing the cochains in $C^\bullet_\textup{diff}(\sfG)$ to $M$ corresponds to localization in $ D^\bullet_\textup{diff}(\sfG)$ to $M$.  However, one concludes from  Eq.~\eqref{Eq:cosimp} that, after localization,
cochains in $\hat{D}^\bullet_\textup{diff}(\sfG)$ can be interpreted as families of smooth Alexander--Spanier cochains on 
the $t$-fibers of $t:\sfG\to M$ which are $\sfG$-invariant (see \cite{conmos} for details on Alexander--Spanier cohomology). For smooth manifolds, Alexander--Spanier cohomology is canonically isomorphic to the de Rham cohomology, and since the $\sfG$-action on itself is free and proper, the resulting complex $(\hat{D}^\bullet_\textup{diff}(\sfG),\hat{d})$ is quasi-isomorphic to $(\Omega^\bullet_t(\sfG),d_\textup{dR})^{\sfG}$, whose cohomology equals the Lie algebroid cohomology $H^\bullet_\textup{Lie}(A;\C)$. This gives an alternative proof of Prop \ref{loc-lie}.
\begin{proposition}
\label{pairing-hom}
For homogeneous cochains $\varphi\in D^k_\textup{diff}(\sfG)$, the characteristic map \eqref{eq:chi-uni} is given by
\[
\begin{split}
\chi(\tilde{\varphi})(a)&:=\left<\varphi,a_0\otimes\ldots\otimes a_k\right>\\
&=\int_M\int_{\sfG^k_x} \varphi(x,g_1,\ldots,g_k) \tilde{a}_0(x,g_1)\ldots \tilde{a}_k(g_k,x)dg_1\ldots dg_kdx.
\end{split}
\]
\end{proposition}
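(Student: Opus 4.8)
The plan is to prove this by a direct computation: unwind the definition \eqref{eq:chi-uni} of $\chi_\Omega$ on the inhomogeneous cochain $\tilde\varphi\in C^k_\textup{diff}(\sfG)$ attached to $\varphi$, and then change variables in the fiberwise integral. Writing the arrows in \eqref{eq:chi-uni} as $\gamma_0,\ldots,\gamma_k$, first I would make explicit that the constrained integral $\int_{\gamma_0\cdots\gamma_k=1_x}$ is, after expanding the iterated convolution product, an honest iterated integral over $\sfG^k_x$: taking as integration variables the partial products $g_j:=\gamma_j\gamma_{j+1}\cdots\gamma_k\in\sfG_x$ for $j=1,\ldots,k$, the arrows are recovered as $\gamma_0=g_1^{-1}$, $\gamma_i=g_ig_{i+1}^{-1}$ for $1\le i\le k-1$, and $\gamma_k=g_k$, so that the constraint $\gamma_0\gamma_1\cdots\gamma_k=1_x$ holds automatically.

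With this substitution I would rewrite the two pieces of the integrand. For the kernel factors, the isomorphism $\calA\cong\Psi^{-\infty}(\sfG)$ through the reduced kernel identifies $a_i$ with the smoothing operator whose full Schwartz kernel on $\sfG^{\times_{\!t} 2}$ is $\tilde a_i(g,h)=a_i(gh^{-1})$; hence $a_0(\gamma_0)=\tilde a_0(1_x,g_1)$, $a_i(\gamma_i)=\tilde a_i(g_i,g_{i+1})$ for $1\le i\le k-1$, and $a_k(\gamma_k)=\tilde a_k(g_k,1_x)$. For the cochain factor, since $\gamma_j\gamma_{j+1}\cdots\gamma_k=g_j$ by construction and $t(\gamma_k)=x$, the defining formula $\tilde\varphi(\gamma_1,\ldots,\gamma_k)=\varphi\big(\gamma_1\cdots\gamma_k,\gamma_2\cdots\gamma_k,\ldots,\gamma_k,1_{t(\gamma_k)}\big)$ collapses to the single value $\varphi(g_1,\ldots,g_k,1_x)$, which under the cyclic structure on $D^\bullet_\textup{diff}(\sfG)$ --- equivalently, after the relabelling implicit in the isomorphism $C^\bullet_\textup{diff}(\sfG)\cong D^\bullet_\textup{diff}(\sfG)$ --- is $\varphi(1_x,g_1,\ldots,g_k)$. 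Reassembling the integrand and writing $dx$ for the density $\Omega(x)$ on $M$ yields exactly the asserted formula. One can also phrase this more structurally: $\chi_\Omega$ is a morphism of cocyclic modules by Proposition \ref{prop:char-map} and the passage $C^\bullet_\textup{diff}(\sfG)\cong D^\bullet_\textup{diff}(\sfG)$ is a cocyclic isomorphism, so it suffices to evaluate $\chi_\Omega$ in the homogeneous description --- which is the computation just described.

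The one step that needs genuine care, and which I expect to be the main (if mild) obstacle, is checking that this change of variables is compatible with the density structure. The cochains $a_i$ are longitudinal densities, i.e.\ sections of $s^*\bigwedge^\textup{top}A^*$, so one must verify that $\tilde a_0(1_x,g_1)\,\tilde a_1(g_1,g_2)\cdots\tilde a_k(g_k,1_x)\,dg_1\cdots dg_k$ is a well-defined top density on $\sfG^k_x$ whose integral does not depend on auxiliary choices. This is exactly the right-invariance of the fiberwise measures that already underlies the well-definedness of the convolution product \eqref{convolution}, combined with the fact that the substitution $\gamma_i\leftrightarrow g_i$ is assembled out of right translations $g\mapsto gg_{i+1}^{-1}$; once that is in hand the identity drops out.
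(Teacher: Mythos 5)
Your computation is precisely the ``straightforward computation'' the paper omits, and it follows the same route: unwind \eqref{eq:chi-uni}, change variables to the partial products $g_j=\gamma_j\cdots\gamma_k\in\sfG_x$, and identify reduced with full kernels via $\tilde a_i(g,h)=a_i(gh^{-1})$, with the density bookkeeping handled by right-invariance exactly as you say. The one point worth flagging is the one you already flag: the direct computation yields the cochain evaluated as $\varphi(g_1,\ldots,g_k,1_x)$, which matches the stated $\varphi(x,g_1,\ldots,g_k)$ only up to the cyclic operator --- immaterial for the cyclic cochains $D^\bullet_{\lambda,\textup{diff}}(\sfG)$ with which the proposition is actually used in the proof of the main theorem.
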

\begin{proof}
This is a straightforward computation.
\end{proof}

\subsection{Another characteristic map}\label{sec:another-ch}
In equation \eqref{ind-map}, we have defined a map $\Phi:H^\bullet_\textup{Lie}(A;\C)\to HC^\bullet(\mathscr{A}_{A^*}^\hbar)$. 
Here we shall give another construction of this map on the level of cochains. For this we use the cochain complex $\hat{D}^\bullet_\textup{diff}(\sfG)$ described above, as $\sfG$-invariant smooth Alexander--Spanier cochains on the $t$-fibers of the target map. To simplify the notation, we write $\psi(g_0,\ldots,g_k)=\psi_0(g_0)\otimes\ldots\otimes\psi(g_k)$ for a differentiable cochain in $D^k_\textup{diff}(\sfG)$. For the purpose of cyclic cohomology, we will work with the space of cyclic Alexander--Spanier cochains $D^k_{\lambda, \textup{diff}}(\sfG)$. Recall that this space consists of all
elements of $D^k_\textup{diff}(\sfG)$ which are invariant under the cyclic operator (cf.~\cite{ppt}  and \cite{kp}). 
In the limit that the cochains localize to $M$, this notation can be made precise, provided one reads the tensor products as being over $\calC^\infty(M)$. 
We then proceed as in \cite[Sec.~6]{ppt}: for $\psi\in D^k_{\lambda, \rm diff}(\sfG)$,  and 
$a=a_0\otimes\ldots a_k\in \calC_k(\calA^\hbar_{A^*})$, we define 
\begin{equation}\label{eq:frakX}
\begin{split}
  \mathfrak{X}&\, (\psi)(a) :=\\
  & \Psi^{2r}_{2r} \left. \left(R^*a_0\star \pi^*\psi_0\star\ldots
  \star R^*a_k\star\pi^*\psi_k \right)\right|_{A^*}\in \Omega^{2r}_{\pi^!A}\otimes\C[\hbar^{-1},\hbar]].
\end{split}
\end{equation}
\begin{proposition}
For $\psi\in D^{k-1}_{\lambda, \rm diff}(\sfG)$ and $a\in \mathcal{C}_k(\mathscr{A}^\hbar_{A^*})$, the quantities
\[
\begin{split}
  \mathfrak{X}(d\psi)(a)-\mathfrak{X}(\psi)(ba),\quad\mathfrak{X}(\psi)(Ba)
\end{split}
\]
are exact in $\Omega_{\pi^!A}^{2r}$. 
\end{proposition}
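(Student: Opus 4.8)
The plan is to follow the pattern of \cite[Sec.~6]{ppt} essentially verbatim, since the only structural input we need beyond that reference is the $\sfG$-equivariance packaged in Lemma \ref{id-lie} and Proposition \ref{lem:conn}. It is convenient to first rewrite the definition \eqref{eq:frakX} through the auxiliary map
\[
 T_\psi(a_0\otimes\cdots\otimes a_j):=R^*a_0\star\pi^*\psi_0\star\cdots\star R^*a_j\star\pi^*\psi_j\in\mathscr{A}^\hbar_{T^*_t\sfG},
\]
so that $\mathfrak{X}(\psi)(a)=\Psi^{2r}_{2r}\big(T_\psi(a)\big)\big|_{A^*}$. The whole computation then reduces to comparing $T_{d\psi}(a)$, $T_\psi(ba)$ and $T_\psi(Ba)$ inside $\mathscr{A}^\hbar_{T^*_t\sfG}$ and transporting the comparison through $\Psi^{2r}_{2r}$, using only three facts: $R^*$ is multiplicative and $\sfG$-equivariant (it intertwines $\star_{A^*}$ with $\star_{T^*_t\sfG}$, by \cite[Lemma~7]{nwx}), $\pi^*$ is an algebra homomorphism with $\pi^*1=1$, and the fundamental property \eqref{fund-prop} of the cyclic trace density $\Psi$.

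The first and easiest step deals with the ``inner'' face operators. For $0\le i\le k-1$ the term $d_i\psi$ of $d\psi$ carries the unit $\pi^*1=1$ in its $i$-th slot, so inside $T_{d_i\psi}(a)$ the string $R^*a_i\star 1\star R^*a_{i+1}$ collapses to $R^*(a_i\star a_{i+1})$; this makes $T_{d_i\psi}(a)$ literally equal to the corresponding summand $T_\psi(d_i a)$ of $T_\psi(ba)$. Hence these contributions to $\mathfrak{X}(d\psi)(a)$ and to $\mathfrak{X}(\psi)(ba)$ cancel term by term with no exact correction. What is left is the difference of the last-face term $\mathfrak{X}(d_k\psi)(a)$ (again with a trailing $\pi^*1$ dropped) and the cyclic summand of $b$; writing this as $\Psi^{2r}_{2r}\big(X\star R^*a_k\big)\big|_{A^*}-\Psi^{2r}_{2r}\big(R^*a_k\star X\big)\big|_{A^*}$ with $X=R^*a_0\star\pi^*\psi_0\star\cdots\star\pi^*\psi_{k-1}$, it becomes exactly the assertion that in top form degree $\Psi^{2r}_{2r}$ descends to a trace modulo $d_\calF$-exact forms on $A^*$, which is read off from \eqref{fund-prop}. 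The cyclic ($\lambda$-)invariance of $\psi$ is precisely what is needed to keep the simultaneous rotation of the $\pi^*\psi_j$-factors consistent (equivalently, it is what makes $\mathfrak{X}(\psi)$ well defined on cyclic chains at all).

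The term $\mathfrak{X}(\psi)(Ba)$ is handled in the same spirit: $Ba$ is the cyclic sum of chains obtained by inserting a unit into $a$, so $T_\psi(Ba)$ is a cyclic sum of products of the $R^*a_i$ and the $\pi^*\psi_j$ in which one factor equals $1$; applying $\Psi^{2r}_{2r}$ and using the trace-up-to-$d_\calF$-exact property together with the fact that such a cyclic sum of rotations is exactly what $B$ produces, everything collapses to a $d_\calF$-exact form. I expect the only genuine difficulty to be organisational: matching signs and the many degree shifts, and above all correctly absorbing the extra term $\Psi^{\bullet+1}\circ B$ on the right-hand side of \eqref{fund-prop}. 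The clean way to do this, as in \cite{ppt}, is to run the whole argument on the totalised cyclic complexes $\Tot_\bullet\mathcal{BC}(\mathscr{A}^\hbar_{A^*})$ and $\Tot_\bullet\mathcal{B}\Omega_{\pi^!A}$, where $\Psi$ is an honest morphism of complexes and the two displayed identities are the $b$- and $B$-components of a single statement; degree by degree the $B$-correction would otherwise look like an unexplained leftover. Beyond this the computation is a line-by-line adaptation of \cite[Sec.~6]{ppt}.
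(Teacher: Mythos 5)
Your argument is correct and is essentially the paper's own proof: the inner faces cancel exactly through the unit insertions, the remaining last-face versus cyclic-face difference is $\Psi^{2r}_{2r}$ of a star-commutator, which is exact by the commutator identity derived from \eqref{fund-prop}, and the $B$-part is handled by the cyclicity of $\psi$ in the same way. The one point the paper makes explicit where you only gesture organisationally is that in top form degree the correction term in \eqref{fund-prop} simply vanishes because $\Psi^{2r}_{2r+2}=0$, so no passage to the totalised complexes is needed to absorb it.
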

\begin{proof}  This is essentially a generalization of \cite[Sec. 6, Step I.]{ppt}. The following identity of $\Psi$ is shown in \cite[Prop. 4.2]{ffs} and \cite[Prop. 3.6]{ppt}, as $\Psi^{2r}_{2r+2}=0$:
\begin{equation}\label{eq:psi}
  \Psi_{2r}^{2r}\big( [a, b ]_\star \big)=\Psi^{2r}_{2r}(b (a\otimes b))=-d\Psi^{2r-1}_{2r}(a\otimes b). 
\end{equation}

Now it is straightforward to check that the difference $\mathfrak{X}(d\psi)(a)-\mathfrak{X}(\psi)(ba)$ is equal to 
\[
  (-1)^{k}\Psi^{2r}_{2r} \big( [a_0\star \psi_0\star\cdots\star a_{k-1}\star \psi_{k-1}, a_k ]_\star \big).
\]
By Eq. (\ref{eq:psi}), we conclude that the  above expression is equal to 
\[
  d\big((-1)^{k+1}\Psi^{2r-1}_{2r}(a_0\star \psi_0\star\cdots \star a_{k-1}\star \psi_{k-1}\otimes a_k)\big). 
\]
The similar computation proves that $\mathfrak{X}(\psi)(Ba)$ is exact with the assumption that $\psi$ is 
cyclic. We skip the tedious details. 
\end{proof}

\begin{corollary}
The map
\[
\overline{\mathfrak{X}}(\psi):=\int_{A^*}\left<\mathfrak{X}(\psi),\Omega_{\pi^!A}\right>
\]
defines a morphism of complexes
\[
\overline{\mathfrak{X}}:\left( D^\bullet_{\lambda, \rm diff}(\sfG),d\right)\to \left(\Tot^\bullet(\mathcal{BC}(\mathscr{A}^\hbar_{A^*})),b+B\right).
\]
\end{corollary}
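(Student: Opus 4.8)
The plan is to deduce the statement directly from the preceding Proposition, the mechanism being that integrating a $d$-exact top-degree $\pi^!A$-form against the invariant volume form $\Omega_{\pi^!A}$ over $A^*$ produces zero. So the first step is to record the relevant Lie algebroid Stokes formula: for $\gamma\in\Omega^{2r-1}_{\pi^!A}$ whose image in $A^*$ has compact support — more precisely, with the fibrewise decay inherited from the symbols $a_i$ — one has
\[
\int_{A^*}\left<d\gamma,\Omega_{\pi^!A}\right>=0 ,
\]
because $\Omega_{\pi^!A}$ is a $\pi^!A$-invariant section of $L_{\pi^!A}$ by Lemma \ref{lema:volume}, so that $\langle d\gamma,\Omega_{\pi^!A}\rangle$ is the total $\pi^!A$-divergence of $\gamma$. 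This is the leafwise analogue, along the foliation $\calF$ of $T^*_t\sfG$, of the Stokes identity already used in \cite{ppt}, and the necessary support/decay is exactly the one that guarantees convergence of the traces in Section \ref{as-def} and Proposition \ref{comp-trace}.

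Granting this, the chain-map property falls out of the Proposition. For $\psi\in D^{k-1}_{\lambda, \rm diff}(\sfG)$ and chains $a$ in the relevant groups of $\mathcal{C}_\bullet(\mathscr{A}^\hbar_{A^*})$, the Proposition asserts that $\mathfrak{X}(d\psi)(a)-\mathfrak{X}(\psi)(ba)$ and $\mathfrak{X}(\psi)(Ba)$ are $d$-exact in $\Omega^{2r}_{\pi^!A}$ (here the cyclicity of $\psi$, i.e.\ the hypothesis $\psi\in D^\bullet_{\lambda, \rm diff}$, is what makes the $B$-term exact); pairing with $\Omega_{\pi^!A}$ and integrating, the displayed formula yields
\[
\overline{\mathfrak{X}}(d\psi)(a)=\overline{\mathfrak{X}}(\psi)(ba),\qquad\overline{\mathfrak{X}}(\psi)(Ba)=0 .
\]
Placing $\overline{\mathfrak{X}}(\psi)$, for $\psi$ of degree $k-1$, in the summand of $\Tot^{k-1}(\mathcal{BC}(\mathscr{A}^\hbar_{A^*}))$ dual to $\mathcal{C}_{k-1}$ and zero on the other summands, these two identities say precisely that $B^*\overline{\mathfrak{X}}(\psi)=0$ and $b^*\overline{\mathfrak{X}}(\psi)=\overline{\mathfrak{X}}(d\psi)$, i.e.\ $(b+B)^*\,\overline{\mathfrak{X}}(\psi)=\overline{\mathfrak{X}}(d\psi)$ in $\Tot^{k}(\mathcal{BC}(\mathscr{A}^\hbar_{A^*}))$. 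Here one uses that $\Psi^{2r}_{2r+2}=0$ — already invoked in the proof of the Proposition — so that $\mathfrak{X}$ has no further components and this corner placement is consistent with the total differential; and $\overline{\mathfrak{X}}$ is well defined since $\mathfrak{X}(\psi)(a)$ is given by the explicit formula \eqref{eq:frakX}, the only ambiguity being the exactness that disappears upon integration against $\Omega_{\pi^!A}$.

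Two points deserve care rather than mere bookkeeping. The first is the Stokes formula on the non-compact space $A^*$: it rests on the fibrewise decay built into $\mathscr{A}^\hbar_{A^*}$ (equivalently, on working with the compactly supported version used in \eqref{ind-map}) together with the invariance of $\Omega_{\pi^!A}$, and I would simply cite the corresponding leafwise argument of \cite{ppt}. The second is matching the sign and grading conventions so that the $(-1)^k$ occurring in the Proposition is absorbed correctly into $b^*+B^*$; this is routine once the conventions of Section \ref{diff-coh} and of \cite{ppt} are fixed. With these in place the corollary is immediate.
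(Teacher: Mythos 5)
Your argument is correct and is essentially the proof the paper intends (the corollary is stated without proof as an immediate consequence of the preceding proposition): exactness in $\Omega^{2r}_{\pi^!A}$ is killed upon pairing with the invariant section $\Omega_{\pi^!A}$ of Lemma \ref{lema:volume} and integrating over $A^*$, which is the same vanishing that makes the duality pairing of Section \ref{sec:alg-ind} descend to cohomology, and this yields $\overline{\mathfrak{X}}(d\psi)(a)=\overline{\mathfrak{X}}(\psi)(ba)$ and $\overline{\mathfrak{X}}(\psi)(Ba)=0$, i.e.\ the chain-map property. Your added care about decay on the non-compact space $A^*$ and about sign conventions is sensible bookkeeping but does not change the route.
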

\noindent We now come to the actual comparison of $\mathfrak{X}$ with $\Psi$.
\begin{proposition}\label{prop:frakX}
For $\Psi \in D^k_\textup{diff} (\sfG)$, we have the following equality in $H^\bullet_\textup{Lie}(\pi^!A;\C)$:
\[
\left[\mathfrak{X}(\psi)(a)\right]=\left[\Psi(a)\right]\cup \left[E(\psi)\right].
\]
\end{proposition}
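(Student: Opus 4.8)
The plan is to reduce the identity to the level of cochains and forms, and then match it term by term with the computation of \cite[Sec.~6]{ppt}, carried out leafwise on the regular Poisson manifold $T^*_t\sfG$ and then restricted to $\sfG$-invariants via Lemma \ref{id-lie}. Both sides make sense on cohomology once $\psi$ is a cocycle and $a$ a cycle in $\Tot_\bullet(\mathcal{BC}(\mathscr{A}^\hbar_{A^*}))$: the form $\mathfrak{X}(\psi)(a)$ lies in the top degree $\Omega^{2r}_{\pi^!A}$ and hence is $d_\calF$-closed, and its class is independent of the chosen representatives by the preceding proposition; $\Psi(a)=\Psi^{2r-k}_{2r}(a)\in\Omega^{2r-k}_{\pi^!A}$ is closed because $\Psi$ is a morphism of mixed complexes; and $[E(\psi)]\in H^k_\textup{Lie}(\pi^!A;\C)$ is the image under $\pi^*$ (an isomorphism by Corollary \ref{cor:alg-coh}) of the van Est class. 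As both constructions are chain maps in the $\psi$-variable, it suffices to check the equality at cochain level, modulo $d_\calF$-exact forms, for cochains written in the separated form $\psi=\psi_0\otimes_{\calC^\infty(M)}\cdots\otimes_{\calC^\infty(M)}\psi_k$ of \eqref{eq:frakX}; for these, the antisymmetrization \eqref{eq:q.i.-lie} identifies $E(\psi)$ with the $\sfG$-invariant leafwise form $\omega_\psi:=\psi_0\,d_\calF\psi_1\wedge\cdots\wedge d_\calF\psi_k$, which represents $\pi^*E(\psi)$ under Lemma \ref{id-lie}.

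The heart of the argument is then the leafwise, $\sfG$-invariant version of \cite[Sec.~6, Steps~I--III]{ppt}. The functions $\pi^*\psi_i$ in \eqref{eq:frakX} are pulled back from $\sfG$ and carry no $\hbar$, so by Lemma \ref{module-op} --- after transporting the computation to $\mathscr{S}^\hbar_{T^*_t\sfG}$ along the $\sfG$-equivariant equivalence constructed in this section, which by Proposition \ref{comp-trace} changes neither side --- multiplication by each $\pi^*\psi_i$ from the left is undeformed. Feeding $R^*a_0\star\pi^*\psi_0\star\cdots\star R^*a_k\star\pi^*\psi_k$ into $\Psi^{2r}_{2r}$, one combines this with the cyclicity of $\Psi^{2r}_{2r}$ modulo $d_\calF$-exact forms and with the compatibility \eqref{fund-prop} of $\Psi$ with $b+B$ and $d_\calF$ (compare \eqref{eq:psi}) to convert each inserted function into a leafwise differential: the non-centrality of the $\pi^*\psi_i$ along the leaves of $\calF$ is precisely what generates the factors $d_\calF\psi_i$, while the cocycle condition and the cyclicity of $\psi$ determine which factor remains undifferentiated. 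One thereby obtains $\mathfrak{X}(\psi)(a)\equiv\Psi(a)\wedge\omega_\psi$ modulo $d_\calF$-exact forms, and passing to cohomology yields $[\mathfrak{X}(\psi)(a)]=[\Psi(a)]\cup[E(\psi)]$.

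The main obstacle is exactly this conversion step. Making rigorous the passage from an inserted classical function to a leafwise differential requires unwinding the explicit formula for the cyclic trace density of \cite[Def.~3.5]{ppt} --- built from the fundamental cyclic cocycle of the formal Weyl algebra --- together with a careful homotopy argument based on \eqref{fund-prop}, with all signs and normalizations tracked so that exactly the representative $\omega_\psi$ of $E(\psi)$, with the antisymmetrization of \eqref{eq:q.i.-lie}, appears rather than a scalar multiple of it. Apart from this, the argument is a direct (if lengthy) adaptation of \cite[Sec.~6]{ppt}; the only genuinely new features are that everything is done fibrewise over $M$, along the symplectic foliation $\calF$, and $\sfG$-equivariantly, so that the output lands naturally in the Lie algebroid complex of $\pi^!A$.
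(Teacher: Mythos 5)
Your overall strategy --- work leafwise and $\sfG$-equivariantly on $T^*_t\sfG$ and adapt the trace-density computation of \cite[Sec.~6]{ppt} --- points in the right direction, but there is a genuine gap: the step you yourself single out as ``the main obstacle'', namely converting each inserted function $\pi^*\psi_i$ into a factor $d_\calF\psi_i$ so that $\mathfrak{X}(\psi)(a)\equiv\Psi(a)\wedge\omega_\psi$ modulo exact forms, is essentially the content of the proposition, and it is never actually carried out; it is only asserted to follow from a ``careful homotopy argument'' with signs and normalizations tracked. The paper does not redo that computation at all: it invokes the already proven fiberwise statement \cite[Prop.~6.8]{ppt}, which gives the equality of the \emph{integrals} $\int_{t^{-1}(x)}\mathfrak{X}(\psi)(a)=\int_{t^{-1}(x)}\Psi(a)\wedge E(\psi)$ over each $t$-fiber of $T^*_t\sfG$; since both sides are compactly supported top-degree leafwise forms, their difference is exact on each fiber, with some primitive $\alpha_x(\psi,a)\in\Omega^{2r-1}_\calF(t^{-1}(x))$. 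So the heavy input is taken as a citable black box rather than re-derived equivariantly, and your plan leaves precisely that re-derivation open.

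The second, related gap concerns where the conclusion lives. Exactness ``modulo $d_\calF$-exact forms'' is not enough: to obtain an equality in $H^\bullet_\textup{Lie}(\pi^!A;\C)$ the primitive must be $\sfG$-invariant, i.e.\ lie in $\Omega^{2r-1}_{\pi^!A}$. Your sketch gestures at this (``everything is done $\sfG$-equivariantly''), but on your route it would require verifying that all homotopy operators in the adapted \cite[Sec.~6]{ppt} argument are themselves $\sfG$-equivariant, which is exactly the kind of detail you defer. The paper handles invariance by a separate explicit argument: since $\mathfrak{X}(\psi)(a)$ and $\Psi(a)\wedge E(\psi)$ are both $\sfG$-invariant and the $\sfG$-action on $T^*_t\sfG$ is proper and free, the fiberwise primitives can be averaged with respect to a cut-off function on $T^*_t\sfG\rtimes\sfG$, producing an invariant primitive that descends to $\Omega^{2r-1}_{\pi^!A}$ and yields the cup-product identity. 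To complete your proposal you must supply either (i) the full equivariant rerun of \cite[Sec.~6]{ppt} you allude to, with equivariant homotopies, or (ii) the paper's shortcut via \cite[Prop.~6.8]{ppt} together with this averaging step; as written, neither is provided.
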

\begin{proof} For $x\in M$, we look at the fiber $t^{-1}(x)$ in $T^*_t\sfG$, since $\Psi$ can be viewed as a fiberwise Alexander--Spanier cochain.
It has been proved in \cite[Prop. 6.8]{ppt} that  on each fiber $t^{-1}(x)$, 
\[
\int_{t^{-1}(x)} \mathfrak{X}(\psi)(a)=\int_{t^{-1}(x)}\Psi(a)\wedge E(\psi). 
\]
The above equation implies that $\mathfrak{X}(\psi)(a)-\Psi(a)\wedge E(\psi)$ is an exact form on $t^{-1}(x)$.  Define $\alpha_x(\psi, a)\in \Omega^{2r-1}_\calF( t^{-1} (x) )$ by
\[
d\alpha_x(\psi, a)=\mathfrak{X}(\psi)(a)-\Psi(a)\wedge E(\psi)\qquad x\in M.
\]
Notice that the $\sfG$ action on $T^*_t\sfG$ is proper and free. As both $\mathfrak{X}(\psi)(a)$ and $\Psi(a)\wedge E(\psi)$ are $\sfG$-invariant, $\alpha (\psi, a)\in \Omega^{2r-1}_\calF(T^*_t\sfG)$ can also be chosen to be $\sfG$-invariant using the trick of averaging with respect to a cut-off function on $T^*_t\sfG\rtimes \sfG$.  Therefore $\alpha(\psi, a)$ is in $\Omega^{2r-1}_\textup{Lie}(\pi^! A)$ satisfying
\[
d\alpha (\psi, a)=\mathfrak{X}(\psi)(a)-\Psi(a)\wedge E(\psi). 
\]
This gives the desired equality in the Lie algebroid cohomology of $\pi^!A$.
%Let us choose local Darboux coordinates in an open neighbourhood of $A^*$ in $T^*_t\sfG$ as follows: since $t:\sfG\to M$ is a submersion, we can choose local coordinates $(x,q)$ in a neighbourhood $U$ of $\sfG$ with $U\cap M\not=\emptyset$, where $x=x_1,\ldots,x_n$ are local coordinates on $M$ and the map $t$ is given by projection onto $x$. This induces local coordinates $(x,p,q)$ on the open neighbourhood $\pi^{-1}(U)$ of $T^*_t\sfG$ in which the Poisson tensor is given by \eqref{darboux-poisson}. By \cite[??]{ffs}, the map $\Psi_{2r}$ reduces in such coordinates to 
%\[
%\Psi_{2r}(a)=\calE(a)\Theta^r,
%\] 
%where $\calE$ is an equivalence with the Weyl algebra, which exists in such a chart. 
\end{proof}
\subsection{Proof of the main theorem}\label{sec:proof}
We now have all the ingredients to finish the proof of Theorem \ref{main-theorem}. As before, we denote by 
$[\alpha]\in H^\textup{ev}_\textup{Lie}(A;\C)$ a Lie algebroid cohomology class represented by a groupoid cochain 
$\alpha$ localized in a neighbourhood $U_k$ of the unit space, and by
$[\Ind(D)]_\textup{loc}\in K^\textup{loc}_0(\calA)$ the localized $K$-theory class of an elliptic 
pseudodifferential operator $D$ with parametrix defined by a kernel $k_D \in \calA$ with support in $U_1$. 
We write the kernel $k_D$ as $\Op(a)$ for some symbol $a\in\symb^{-\infty}(A^*)$. Since the localized 
$K$-theory class of $\Op_\hbar(a)$ does not depend on $\hbar$, we compute the pairing in the limit as 
$\hbar$ approaches $0$:
\[
\begin{split}
\llangle {[\Ind(P)]_\textup{loc}} & ,\varphi\rrangle  =\langle\chi(\varphi),k_P\rangle\\
=&\langle\varphi_0\otimes\ldots\otimes\varphi_{2k},k_P\otimes\ldots\otimes k_P\rangle\\
=&\lim_{\hbar\downarrow 0}\left<\varphi_0\otimes\ldots\otimes\varphi_{2k},\Op_\hbar(a)\otimes\ldots\otimes \Op_\hbar(a)\right>\\
&\mbox{(Prop. \ref{pairing-hom} and Lemma \ref{module-op})}\\
=&\lim_{\hbar\downarrow 0}\tau_\Omega\left(\Op_\hbar\left(\varphi_0\star_{\rm an}a_0\right)\cdots \Op_\hbar\left(\varphi_{2k}\star_{\rm an}a_{2k}\right)\right)\quad\\
=&\lim_{\hbar\downarrow 0}\tau_\Omega\left(\Op_\hbar\left(\varphi_0\star_{\rm an}a_0\star_{\rm an}\ldots\star_{\rm an}\varphi_{2k}\star_{\rm an}a_{2k}\right)\right)\\
=&\lim_{\hbar\downarrow 0}\tr_\Omega\left(\varphi_0\star_{\rm an}a_0\star_{\rm an}\ldots\star_{\rm an}\varphi_{2k}\star_{\rm an}a_{2k}\right)\hspace{10mm}\mbox{(Prop.~\ref{comp-trace})}\\
=&\lim_{\hbar\downarrow 0}\left<\Psi\left(a_0\otimes\ldots\otimes a_{2k}\right)\cup E(\varphi),\tilde{\Omega}\right> \hspace{17.5mm} \mbox{(Prop.~\ref{prop:frakX})}\\
=&\lim_{\hbar\downarrow 0}\int_{A^*}\left<\pi^*\alpha \wedge \hat{A}(\pi^!A)\wedge \sigma(\Ch(a)), \Omega_{\pi^!A} \right>.\quad \mbox{(Cor.~\ref{cor:phi})}\\
&\mbox{(as the integral is independent of $\hbar$)}\\
=&\int_{A^*}\left<\pi^*\alpha \wedge \hat{A}(\pi^!A)\wedge \sigma(\Ch(a)), \Omega_{\pi^!A} \right>.\qquad
\end{split}
\]
This completes the proof of Theorem \ref{main-theorem}.
\section{The  nonunimodular case}
\label{sec:nonunimodular}

Here, we explain how to remove the unimodular assumption in Theorem \ref{main-theorem}.  When $\sfG$ is nonunimodular, there exists no invariant
``transversal density'' $\Omega$, hence we do not have an obvious trace  on  the convolution algebra $\calA$ at our disposal.  However, we notice that the groupoid $\sfG$ and also the associated Lie algebroid $A$ naturally act on the bundle $L=\bigwedge^{\text{top}}T^*M\otimes \bigwedge^{\text{top}}A^*$. Therefore, we work with differentiable groupoid or Lie algebroid cohomology with coefficients in $L$ in the definition of the characteristic map $\chi$.  As many arguments in this section are direct generalizations of those in the unimodular case, we will skip the details of most proofs. 

Let $\sigma$ be the map from $\sfG_k$ to $M$ by mapping $(g_1, \cdots, g_k)$ to $t(g_k)$.  For $g\in \sfG$, denote by $\alpha_g: L|_{t(g)}\to L|_{s(g)}$ the action on $L$. The action $\alpha$ satisfies $\alpha_{gh}=\alpha_g\alpha_h$. Consider $C^k_\textup{diff}(\sfG;L):=\Gamma^\infty(\sfG_k;\sigma^*(L))$, the space of smooth sections of the line bundle $\sigma^*(L)$ on $\sfG_k$. On $C^k_\textup{diff}(\sfG;L)$, define the differential
\[
\begin{split}
d\varphi(g_1,&\ldots,g_{k+1}):=\alpha_{g_1^{-1}}\big(\varphi(g_2,\ldots,g_{k+1})\big)\\
&+\sum_{i=1}^k(-1)^k\varphi(g_1,\ldots,g_ig_{i+1},\ldots,g_{k+1})+(-1)^{k+1}\varphi(g_1,\ldots,g_k)\big.
\end{split}
\]
The differentiable cohomology $H^\bullet_{\textup{diff}}(\sfG;L)$ of $\sfG$ with coefficient in $L$ is defined to be the cohomology of the complex $(C^\bullet_\textup{diff} (\sfG;L), d)$. 

Analogous to the standard differentiable groupoid cohomology, we can consider the homogeneous cochain complex as  in Section \ref{homogeneous}.  Let $t$ be the natural map from $\sfG^{\times_{\!t}k}$ to $M$ by mapping $(g_1, \cdots, g_k)$ to $t(g_1)=\cdots =t(g_k)$.  Observe that $\sfG$ acts on both $\sfG^{\times_{\!t}k}$ and $L$. Define $D^k_{\textup{diff}}(\sfG; L)$ to be $\Gamma^\infty_\textup{inv}(\sfG^{\times_{\! t}(k+1)};t^*L)$ with the same differential as on $D^\bullet_{\textup{diff}}(\sfG)$,
\[
\begin{split}
d_i\psi(g_0, \cdots, g_k)&:=\psi(g_0, \cdots, \hat{g}_i, \cdots, g_k),\qquad 0\leq i\leq k\\
\tau_k\psi(g_0, \cdots, g_k)&:=\psi(g_k, g_0,\cdots, g_{k-1}).
\end{split}
\]
The two cochain complexes $C^\bullet(\sfG; L)$ and $D^\bullet_{\textup{diff}}(G;L)$ are isomorphism under the following maps
\[
\begin{split}
D^k_{\textup{diff}}(\sfG;L)\ni \psi \mapsto &\ \tilde{\psi}(g_1,\cdots,g_k):=\psi( g_1g_2\cdots g_k, g_2\cdots g_k, ..., g_k, 1_{t(g_k)})\\
C^k_\textup{diff}(\sfG;L)\ni \varphi\mapsto &\ \bar{\varphi}(g_0,\cdots, g_k):=\alpha_{g_k^{-1}}\varphi(g_0g_1^{-1},...,g_{k-1}g_k^{-1}).
\end{split}
\]

We define a characteristic map $\chi: C^\bullet_\textup{diff} (\sfG;L) \cong D^\bullet_{\textup{diff}}(\sfG;L)\to X^\bullet(\calA)$ by
\begin{equation}\label{eq:chi-twist}
\begin{split}
\chi(\varphi)(a_0, \cdots, a_k)&:=\int_M\Big( \int_{g_0\cdots g_k=1}\varphi(g_1, \cdots, g_k)a_0(g_0)a_1(g_1)\cdots a_k(g_k)\Big)\\
\chi(\psi)(a_0, \cdots, a_k)&:=\int_{M\ni x}\Big(\int_{\sfG^k_x} \psi(x,g_1,\cdots, g_k )\tilde{a}_0(x,g_1)\cdots\tilde{a}_k(g_k,x)\Big)
\end{split}
\end{equation}
It is straightforward to check that $\chi$ is a chain map and therefore defines a map on the associated cohomology groups $\chi: H^\bullet_{\textup{diff}}(\sfG;L)\to HC^\bullet(\calA)$ and therefore a well defined pairing between the differentiable cohomology of $\sfG$ and the K-theory of $\calA$
\[
\langle\ ,\ \rangle: H^\bullet_{\textup{diff}}(\sfG;L)\times K_\bullet(\calA)\longrightarrow \C.
\]

The localization of the differentiable cohomology to the unit space can be identified with the Lie algebroid cohomology $H^\bullet_{\textup{Lie}}(A; L)$ with the same arguments as is explained in Section \ref{homogeneous}. In this way, we have obtained a well defined pairing between the Lie algebroid cohomology of $A$ and the localized K-theory of $\calA$,
\[
\llangle\ ,\ \rrangle: H^\bullet_{\textup{Lie}}(A;L)\times K^\textup{loc}_0(\calA)\longrightarrow \C.
\]

Recall that $\pi$ is the projection from $A^*$ to $M$, and $\pi^! A$ is a Lie algebroid over $A^*$, the induced pullback of $A$.
Denote by $L_{\pi^!A}$  the line bundle $\bigwedge^\textup{top}T^*A^*\otimes \bigwedge^\textup{top}\pi^!A$. Note that the pullback 
line bundle $\pi^*L_{A}$ is isomorphic to $\Theta^r\otimes L_{\pi^! A}$ as $\pi^!A$-representations. Since $\Theta^r$ is $\pi^A$-invariant, 
one concludes that $L_{\pi^!A}$ is isomorphic to $\pi^*A$ as $\pi^!A$-representations. 
By \cite[Thm.~2]{crainic}, we obtain the following proposition.
\begin{proposition}\label{cor:alg-coh-coef}
Under the isomorphism between $H^\bullet_{\textup{Lie}}(\pi^!A,\C)$ and $H^\bullet_{\textup{Lie}}(A,\C)$ defined in Cor.~\ref{cor:alg-coh},  
the cohomology groups $H^\bullet_{\textup{Lie}}(\pi^!A; L_{\pi^!A})$ and $H^\bullet_{\textup{Lie}}(A;L_A)$ are naturally isomorphic as $H^\bullet_{\textup{Lie}}(A,\C)$-modules. 
\end{proposition}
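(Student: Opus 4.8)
The plan is to deduce the statement from the twisted-coefficient version of \cite[Thm.~2]{crainic}, using the identification $L_{\pi^!A}\cong\pi^*L_A$ of $\pi^!A$-representations established in the paragraph preceding the statement (coming from the identity $\pi^*L_A\cong\Theta^r\otimes L_{\pi^!A}$ and the nondegeneracy of $\Theta$), and then to match the module structures. Thus $H^\bullet_\textup{Lie}(\pi^!A;L_{\pi^!A})\cong H^\bullet_\textup{Lie}(\pi^!A;\pi^*L_A)$ canonically, and it remains to relate the right-hand side to $H^\bullet_\textup{Lie}(A;L_A)$.

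For this I would run, with coefficients, the argument that proves Cor.~\ref{cor:alg-coh}: since $\pi:A^*\to M$ is a vector bundle its fibers are contractible, so the Leray-type spectral sequence of the surjective submersion $\pi$ degenerates for any representation $E$ of $A$ and gives a natural isomorphism $\pi^*:H^\bullet_\textup{Lie}(A;E)\to H^\bullet_\textup{Lie}(\pi^!A;\pi^*E)$ in all degrees; the spectral-sequence argument is insensitive to the coefficient system. Taking $E=L_A$ and composing with the previous identification yields the desired graded isomorphism. For the module claim, note that the $H^\bullet_\textup{Lie}(A;\C)$-module structure on $H^\bullet_\textup{Lie}(A;L_A)$ is given by the cup product attached to the pairing $\C\otimes L_A\to L_A$, and likewise on $H^\bullet_\textup{Lie}(\pi^!A;L_{\pi^!A})$ over $H^\bullet_\textup{Lie}(\pi^!A;\C)$, which Cor.~\ref{cor:alg-coh} identifies with $H^\bullet_\textup{Lie}(A;\C)$ as a ring; since the Chevalley--Eilenberg pullback along the algebroid morphism $\pi^!A\to A$ is a DGA morphism compatible with that pairing, $\pi^*(\alpha\cup m)=\pi^*\alpha\cup\pi^*m$, so the isomorphism is $H^\bullet_\textup{Lie}(A;\C)$-linear.

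The only genuinely non-formal ingredient is the coefficient version of \cite[Thm.~2]{crainic}. The step I expect to require the most care is the identification $L_{\pi^!A}\cong\pi^*L_A$ together with the module bookkeeping: one must check that the \emph{canonical} $\pi^!A$-action on the transverse density line bundle $L_{\pi^!A}$ in the sense of \cite{elw} really is the pulled-back action on $\pi^*L_A$ — which is where the closedness, nondegeneracy and $\pi^!A$-invariance of $\Theta$ enter — and that under this identification the cup-product module structures on the two sides correspond. Everything else is naturality of $\pi^*$ and of cup products.
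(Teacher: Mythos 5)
Your proposal follows essentially the same route as the paper: the paper establishes $L_{\pi^!A}\cong\pi^*L_A$ as $\pi^!A$-representations via $\pi^*L_A\cong\Theta^r\otimes L_{\pi^!A}$ and the invariance of $\Theta^r$, and then simply invokes Crainic's Theorem 2 (which allows coefficients) for the pullback along the fiberwise-contractible submersion $\pi:A^*\to M$. Your additional spelling-out of the module compatibility via the DGA property of $\pi^*$ is a correct elaboration of what the paper leaves implicit.
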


There is a canonical pairing between the spaces $H^k_{\textup{Lie}}(\pi^!A;L_{\pi^!A})$ and $H^{2r-k}_{\textup{Lie},c}(\pi^!A;\C)$ given by 
\[
\langle\alpha, \beta \rangle_{\pi^!A}=\int_{A^*} \alpha \wedge \beta. 
\]
In the above equation, $\alpha$ is an element of $\Omega^k(\pi^!A; L_{\pi^!A})$, and $\beta$ is an element of $\Omega^{2r-k}_c(\pi^!A)$, 
and $\alpha\wedge \beta$ is a section of $\bigwedge^{2r}\pi^!A^*\otimes \bigwedge^{2r}\pi^!A \otimes \bigwedge^\textup{top}T^*A^*$. 
With the natural pairing map
\[
 {\bigwedge}^{2r}\pi^!A^*\otimes {\bigwedge}^{2r}\pi^!A \longrightarrow \R,
\]
we can view $\alpha\wedge\beta$ as a compactly supported section of $\bigwedge^\textup{top} T^*A^*$ and integrate it over $A^*$ to define the pairing. Now we are ready to introduce a generalization of the map $\Phi:\Omega^\bullet_{\textup{Lie}}(A;L_A)\to \text{Tot}^\bullet({\mathcal {B}}\calC(\calA^\hbar_{A^*,\textup{c}} (A^*) ))$ using Eq.~(\ref{ind-map}).

For the convenience of computation, we from now on fix a smooth nonvanishing section $\Omega$ of 
$L$. We remark that since $\sfG$ is not assumed to be unimodular,  the section $\Omega$ may not 
$\sfG$-invariant. This defect is measured by the modular function 
$\delta^\Omega_\sfG\in \calC^\infty(\sfG)$ defined by the property 
$ g(\Omega_{s(g)})= \delta^\Omega_\sfG(g)\Omega_{t(g)}$ (cf.~Sec.~\ref{sec:unitr}).  
The homogeneous version 
$D^\bullet_{\textup{diff}}(\sfG;L)$ of the groupoid cohomology can be identified with the space 
$D^\bullet_{\text{AS}, \delta}(\sfG)$ of families of Alexander-Spanier cochains over the $t$-fibers of 
$\sfG\to M$ satisfying the twisted invariance property,
\[
  g^*(\varphi_{s(g)})=\delta^\Omega_\sfG(g^{-1})\varphi_{t(g)},\quad 
  \varphi\in \calC^\infty(\sfG^{\times_{\! t} k}).
\]
With $\Omega$, the characteristic map $\chi$ defined in Eq.~(\ref{eq:chi-twist}) becomes 
identical to the characteristic map $\chi_\Omega$ defined in Eq.~(\ref{eq:chi-uni}), and the 
pairing formula $\langle\ , \ \rangle_{\pi^!A}$ becomes identical to the one in 
Section \ref{sec:alg-ind} with the unimodular assumption.  Furthermore, the same formula as 
Eq.~(\ref{eq:frakX}) defines a natural map $\mathfrak {X}$.  

It is straightforward to check that the proofs of Props.~\ref{comp-trace} - \ref{prop:frakX} 
are independent of the property that $\Omega$ is $\sfG$-invariant, though $\tr_\Omega$ 
and $\tau_\Omega$ are not traces on $\calA^\hbar_{A^*}$ anymore. Therefore, they still hold in the 
present setup. As a conclusion, the proof of Theorem~\ref{main-theorem} explained in 
Section \ref{sec:proof} directly generalizes to give the following result.
\begin{theorem}
\label{gen-loc-ind}
Let $D$ be a longitudinal elliptic differential operator and assume $\alpha\in H^{2k}_\textup{Lie}(A;L_A)$. 
Then the following identity holds true;
\[
 \llangle [\Ind(D)]_\textup{loc},\alpha\rrangle=\frac{1}{(2\pi\sqrt{-1})^k}
 \int_{A^*}\pi^*\alpha\wedge\hat{A}(\pi^!A)\wedge\rho^*_{\pi^!A}{\rm ch}(\sigma(D)),
\]
where we have used the pairing 
$\bigwedge^{2r}\pi^!A^*\otimes \bigwedge ^{2r}\pi^!A \longrightarrow \R$ in the above integral 
formula. 
\end{theorem}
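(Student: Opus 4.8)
The plan is to deduce Theorem~\ref{gen-loc-ind} from the unimodular case, Theorem~\ref{main-theorem}, by trivializing the coefficient bundle. First I would fix an arbitrary nonvanishing global section $\Omega\in\Gamma^\infty(M,L)$ and introduce the modular cocycle $\delta^\Omega_\sfG\in\calC^\infty(\sfG)$ characterized by $g(\Omega_{s(g)})=\delta^\Omega_\sfG(g)\Omega_{t(g)}$. Using $\Omega$, the homogeneous complex $D^\bullet_\textup{diff}(\sfG;L)$ is identified with the complex of families of smooth Alexander--Spanier cochains on the $t$-fibers of $\sfG\to M$ satisfying the twisted invariance $g^*\varphi_{s(g)}=\delta^\Omega_\sfG(g^{-1})\varphi_{t(g)}$; under this identification the twisted characteristic map $\chi$ of \eqref{eq:chi-twist} becomes \emph{literally} the map $\chi_\Omega$ of \eqref{eq:chi-uni}, and the duality pairing $\langle\ ,\ \rangle_{\pi^!A}$ becomes the one used in Section~\ref{sec:alg-ind}. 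Here one invokes the isomorphism $L_{\pi^!A}\cong\pi^*L_A$ of $\pi^!A$-representations, coming from $\pi^*L_A\cong\Theta^r\otimes L_{\pi^!A}$ together with the $\pi^!A$-invariance of $\Theta^r$, as well as Corollary~\ref{cor:alg-coh}; these yield the module isomorphism of Proposition~\ref{cor:alg-coh-coef}.

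Second, I would revisit the auxiliary results feeding the computation in Section~\ref{sec:proof}: the trace comparison of Proposition~\ref{comp-trace}; the construction of $\mathfrak{X}$ in \eqref{eq:frakX} together with the $d_\calF$-exactness of $\mathfrak{X}(d\psi)(a)-\mathfrak{X}(\psi)(ba)$ and of $\mathfrak{X}(\psi)(Ba)$; and Proposition~\ref{prop:frakX} identifying $[\mathfrak{X}(\psi)(a)]$ with $[\Psi(a)]\cup[E(\psi)]$. The observation is that none of these proofs uses $\sfG$-invariance of $\Omega$: the functionals $\tau_\Omega$ and $\tr_\Omega$ simply cease to be traces on $\mathscr{A}^\hbar_{A^*}$, but they are still obtained by integrating against $\Omega_{\pi^!A}=\Theta^r\otimes\Omega$, which \emph{is} $\pi^!A$-invariant; the exactness statements are fiberwise over each $t^{-1}(x)\subset T^*_t\sfG$ and are globalized by $\sfG$-equivariant averaging with a cut-off function on $T^*_t\sfG\rtimes\sfG$, a procedure insensitive to the twist. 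Hence Propositions~\ref{comp-trace}--\ref{prop:frakX} carry over verbatim.

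With these in hand, I would run the chain of equalities of Section~\ref{sec:proof} unchanged: writing a parametrix kernel of $D$ as $\Op(a)$ with $a\in\symb^{-\infty}(A^*)$, using Proposition~\ref{pairing-hom} and Lemma~\ref{module-op} to rewrite $\llangle[\Ind(D)]_\textup{loc},\alpha\rrangle$ as $\lim_{\hbar\downarrow 0}\tau_\Omega\big(\Op_\hbar(\varphi_0\star_\textup{an}a_0\star_\textup{an}\cdots\star_\textup{an}\varphi_{2k}\star_\textup{an}a_{2k})\big)$, then Proposition~\ref{comp-trace} to pass to $\tr_\Omega$, Proposition~\ref{prop:frakX} to produce $\langle\Psi(a_0\otimes\cdots\otimes a_{2k})\cup E(\varphi),\Omega_{\pi^!A}\rangle$, and finally Corollary~\ref{cor:phi} --- i.e.\ the $\sfG$-invariant algebraic index theorem of Theorem~\ref{thm:psi} --- together with the $\hbar$-independence of the resulting integral to arrive at $\frac{1}{(2\pi\sqrt{-1})^k}\int_{A^*}\pi^*\alpha\wedge\hat{A}(\pi^!A)\wedge\rho^*_{\pi^!A}{\rm ch}(\sigma(D))$.

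The main obstacle is bookkeeping rather than a new idea: one must keep precise track of where $\delta^\Omega_\sfG$ enters each identity once the trivialization is no longer invariant, and confirm that the $\sfG$-equivariant primitives constructed in the proofs of Propositions~\ref{comp-trace}--\ref{prop:frakX} still descend to sections of the correct, now $L$-twisted, bundles over $A^*$ --- equivalently, that under $L_{\pi^!A}\cong\pi^*L_A$ the volume form $\Omega_{\pi^!A}$ remains genuinely $\pi^!A$-invariant even though $\Omega$ need not be $\sfG$-invariant. Once that single compatibility is verified, the argument is a line-by-line transcription of the unimodular proof.
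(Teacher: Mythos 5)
Your proposal is essentially the paper's own proof: fix a nonvanishing (not necessarily invariant) section $\Omega$ of $L$, introduce the modular cocycle $\delta^\Omega_\sfG$, identify $D^\bullet_\textup{diff}(\sfG;L)$ with $\delta$-twisted invariant Alexander--Spanier cochains so that the twisted $\chi$ of \eqref{eq:chi-twist} coincides with $\chi_\Omega$ and the pairing with the one of Section~\ref{sec:alg-ind}, invoke Proposition~\ref{cor:alg-coh-coef}, observe that the proofs of Propositions~\ref{comp-trace}--\ref{prop:frakX} never use $\sfG$-invariance of $\Omega$ (only that $\tau_\Omega,\tr_\Omega$ cease to be traces), and rerun the chain of equalities of Section~\ref{sec:proof} verbatim. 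One small correction to your final paragraph: the ``single compatibility'' you propose to verify --- that $\Omega_{\pi^!A}=\Theta^r\otimes\Omega$ remains $\pi^!A$-invariant even when $\Omega$ is not $\sfG$-invariant --- is false; since $\Theta^r$ is invariant and $L_{\pi^!A}\cong\pi^*L_A$ as $\pi^!A$-representations, invariance of $\Omega_{\pi^!A}$ is equivalent to invariance of $\Omega$, i.e.\ to unimodularity. What actually makes the argument go through is that the twist is absorbed into the $L$-valued coefficients of $\alpha$, so the top-degree pairing $\bigwedge^{2r}\pi^!A^*\otimes\bigwedge^{2r}\pi^!A\to\R$ turns the relevant exact terms into exact (compactly supported) top forms on $A^*$ whose integrals vanish, which is exactly the sense in which the proofs of Propositions~\ref{comp-trace}--\ref{prop:frakX} are independent of the invariance of $\Omega$.
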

\begin{remark}
Our proof of Theorem B (and Theorem \ref{main-theorem}) relies crucially on the assumption that the Lie algebroid $A$ is integrated to a Lie groupoid $\sfG$. In general, not every Lie algebroid can be integrated to a Lie groupoid, cf.\ \cite{cf}, so this raises the question to extend Theorem B to a pure Lie algebroid statement without the assumption of integrability. 
\end{remark}
\iffalse
\begin{remark}
Quite strikingly, Theorem \ref{main-theorem} is a pure Lie algebroid statement. However, our proof uses the groupoid $\sfG$ inducing this Lie algebroid. Not every Lie algebroid can be integrated to a Lie groupoid, cf.\ \cite{cf}, so this raises the question if integrability is necessary as an assumption.  However, any Lie algebroid is integrable to a {\em local} groupoid: this is similar to a groupoid with the exception that multiplication of arrows is only defined on a neighbourhood $U$ of $M$ in $\sfG$. 
For such local groupoids, a pseudodifferential calculus $\Psi^\infty_{\rm loc}(\sfG)$ is developed in \cite{nwx}. An element $P\in\Psi^\infty_{\rm loc}(\sfG)$ is still a family of pseudodifferential operators $P_x$ on the fibers $t^{-1}(x)$ for $x\in M$, but the difficult issue is to make sense of the property of being $\sfG$-invariant. We can no longer identify the ideal of smoothing operators $\Psi^{-\infty}_{\rm loc}(\sfG)$ with the convolution algebra since this doesn't exist for a local groupoid. However, if we represent Lie algebroid cohomology classes by localized cochains in $U_k$ (c.f.\ Eq. \eqref{ind-nghbd}) in $D^\bullet_{\rm diff}(\sfG;L)$, i.e., invariant Alexander--Spanier cochains, the pairing between elliptic operators and Lie algebroid cohomology classes makes perfect sense. After that we can follow the proof to get the theorem for arbitrary Lie algebroids.
\end{remark}
\fi

\section{Examples}
\label{sec:example}

\subsection{The pair groupoid}
\label{pair}
Let $M$ be a compact manifold. The {\em pair groupoid} has objects given by $M$ and arrows $\sfG_M=M\times M$ so that the source and target maps are given by the two projections. The multiplication is simply given by $(x,y)\cdot (y,z)=(x,z)$ for $x,y,z\in M$. This defines a proper Lie groupoid, so that by \cite[Thm ]{crainic} we have
\begin{equation}
\label{pair-gpd-coh}
H^\bullet_\textup{diff}(\sfG_M;\C)=\begin{cases} \C,&\text{if }\bullet=0,\\ 0,&\text{else}.\end{cases}
\end{equation}
The Lie algebroid of the pair groupoid $\sfG_M$ is given by the tangent bundle $TM$ of $M$, so that the line bundle $L_{TM}$ is canonically trivial. The standard Chevalley--Eilenberg complex for Lie algebroid cohomology identifies with the de Rham complex $(\Omega^\bullet(M),d_\textup{dR})$ of $M$ so that $H^\bullet_\textup{Lie}(TM;\C)\cong H^\bullet(M;\C)$. On the other hand, the complex \eqref{loc-cochain} using localized groupoid cochains also has a very natural interpretation: it is exactly the Alexander--Spanier complex computing the cohomology of a manifold.

The universal enveloping algebra of $TM$ is identified with the algebra $\calD(M)$, the algebra of differential operators on $M$, and with this the Localized Index Theorem \ref{gen-loc-ind} reduces to the localized index theorem proved by Connes--Moscovici \cite{conmos}: for an Alexander--Spanier cocycle $f=f_0\otimes\ldots\otimes f_{2k}\in \calC^\infty(M^{2k+1})$, the pairing with an elliptic differential operator $D$ is given by
\[
\llangle\Ind(D)_\textup{loc},[f]\rrangle=\frac{1}{(2\pi\sqrt{-1})^k}\int_{T^*M}f_0df_1\wedge\ldots\wedge df_k\wedge \hat{A}(T^*M)\wedge {\rm ch}(\sigma(D)).
\]
Because of \eqref{pair-gpd-coh}, there is only one global index, namely the usual one, which under the van Est map is computed to agree with the Atiyah--Singer formula corresponding to $k=0,~f_0=1$ in the expression above.

However, our interpretation in terms of the van Est map makes it natural to look for groupoids that have the same Lie algebroid, but with a richer groupoid cohomology than \eqref{pair-gpd-coh}. For this, consider a discrete group $\Gamma$ acting on a manifold $\tilde{M}$ such that $\tilde{M}\slash\Gamma\cong M$. The universal situation is of course $\Gamma=\pi_1(M)$ acting on the universal covering. Define the Lie groupoid $\sfG_{\Gamma,M}$ with objects given by $M$ and morphisms $\tilde{M}\times_\Gamma\tilde{M}$. This Lie groupoid has the same Lie algebroid $TM$ and since it is Morita equivalent to $\Gamma$, we have 
\begin{equation}
\label{cov-gp}
H^\bullet_\textup{diff}(\sfG_{\Gamma,M};\C)\cong H^\bullet_{\rm grp}(\Gamma;\C)\cong H^\bullet(B\Gamma;\C),
\end{equation}
where $B\Gamma$ denotes the classifying space of $\Gamma$.
The covering space $\tilde{M}$ is classified by a map $\psi:M\to B\Gamma$, unique up to homotopy. 
\begin{lemma}
\label{lemma-cov}
Under the isomorphism \eqref{cov-gp}, the map 
\[
\psi^*: H^\bullet(B\Gamma;\C)\to H^\bullet(M;\C)
\]
identifies with the van Est map $\Phi$.
\end{lemma}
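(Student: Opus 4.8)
The plan is to identify the isomorphism \eqref{cov-gp}, the pullback $\psi^*$, and the van Est map $\Phi$ as three incarnations of one and the same map of simplicial manifolds, and then read off the equality.

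First I would unwind $\sfG_{\Gamma,M}$ as the gauge groupoid of the principal $\Gamma$-bundle $\tilde M\to M$: its nerve has $k$-simplices $(\sfG_{\Gamma,M})_k\cong\tilde M^{k+1}/\Gamma$ for the diagonal $\Gamma$-action, with codiscrete face and degeneracy maps, so that $C^\bullet_\textup{diff}(\sfG_{\Gamma,M};\C)=\calC^\infty(\tilde M^{\bullet+1})^\Gamma$ is the $\Gamma$-equivariant smooth Alexander--Spanier complex of $\tilde M$. The realization of $[k]\mapsto\tilde M^{k+1}$ is a contractible free $\Gamma$-space, hence a model for $E\Gamma$, and therefore $|N\sfG_{\Gamma,M}|=E\Gamma/\Gamma=B\Gamma$; under this identification the inclusion of the $0$-simplices $M=\tilde M/\Gamma\hookrightarrow|N\sfG_{\Gamma,M}|$ is precisely a classifying map of $\tilde M\to M$, that is, $\psi$ up to homotopy. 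Moreover \eqref{cov-gp} is the standard comparison isomorphism relating $H^\bullet_\textup{diff}(\sfG_{\Gamma,M};\C)$ and the cohomology $H^\bullet(|N\sfG_{\Gamma,M}|;\C)=H^\bullet(B\Gamma;\C)$ of the classifying space afforded by the Bott--Shulman double complex; this is an isomorphism here because $\sfG_{\Gamma,M}$ is Morita equivalent to the discrete group $\Gamma$, for which it degenerates to the classical identity $H^\bullet_\textup{grp}(\Gamma;\C)=H^\bullet(B\Gamma;\C)$.

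Second, for $A=TM$ the van Est map $\Phi$ is computed (Proposition \ref{loc-lie}, Section \ref{homogeneous}) by localizing a groupoid cochain to a germ about the unit space $M\hookrightarrow N\sfG_{\Gamma,M}$ and applying the smooth Alexander--Spanier-to-de-Rham quasi-isomorphism; since the $t$-fibers of $\sfG_{\Gamma,M}$ are copies of $\tilde M$, the invariant Alexander--Spanier cochains on the $t$-fibers are the $\Gamma$-equivariant Alexander--Spanier cochains on $\tilde M$, and the quasi-isomorphism lands in $\Omega^\bullet(M)$ with cohomology $H^\bullet_\textup{dR}(M)=H^\bullet_\textup{Lie}(TM;\C)$. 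The content of this step is that this procedure agrees on cohomology with restriction along the inclusion $M\hookrightarrow|N\sfG_{\Gamma,M}|$ of $0$-simplices, which is the naturality of the de Rham theorem for simplicial manifolds combined with the fact that smooth Alexander--Spanier cohomology computes de Rham cohomology.

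Assembling the two steps: under \eqref{cov-gp} the van Est map $\Phi$ becomes restriction along $M\hookrightarrow B\Gamma$, and that inclusion is $\psi$; hence $\Phi$ is identified with $\psi^*$, which is the assertion. The main obstacle is the second step --- verifying that the abstract van Est map, defined through the jet construction \eqref{eq:q.i.-lie} (equivalently through invariant smooth Alexander--Spanier cochains on the $t$-fibers), really is ``restriction to the unit space'' at the level of classifying spaces. Concretely one must check (i) that $t:\sfG_{\Gamma,M}\to M$ is the associated bundle $\tilde M\times_\Gamma\tilde M$ with its tautological $\sfG_{\Gamma,M}$-action, so that $\sfG_{\Gamma,M}$-invariant $t$-fiberwise cochains are $\Gamma$-equivariant cochains on $\tilde M$, and (ii) that the Bott--Shulman comparison map commutes with localization to germs about $M$ and with the Alexander--Spanier-to-de-Rham map, and coincides with the Morita-invariance isomorphism defining \eqref{cov-gp}. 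These are routine manipulations with the explicit models of Sections \ref{sec:localization} and \ref{homogeneous} and with standard facts about classifying spaces of Lie groupoids, but they carry the real content of the lemma.
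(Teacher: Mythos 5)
The paper itself offers no proof of this lemma --- it is stated as a known fact (essentially going back to Connes--Moscovici and classical van Est theory) --- so there is no argument of the authors to compare yours against; I can only judge your proposal on its own terms. Your outline is the natural one and is correct in substance: realizing $\sfG_{\Gamma,M}$ as the gauge groupoid of $\tilde M\to M$, noting that the realization of $[k]\mapsto\tilde M^{k+1}$ is a contractible free proper $\Gamma$-space so that $|N\sfG_{\Gamma,M}|\simeq B\Gamma$ with the $0$-skeleton inclusion $M\hookrightarrow|N\sfG_{\Gamma,M}|$ a classifying map for $\tilde M\to M$, and then interpreting both \eqref{cov-gp} and the van Est map through this model, is exactly how one proves the statement.

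The one place where you are too quick is the step you yourself flag as carrying ``the real content''. A differentiable cocycle sits in bidegree $(k,0)$ of the Bott--Shulman double complex $\Omega^q(\sfG_p)$, and its literal restriction to the $0$-skeleton $M$ is zero; the map $H^\bullet(|N\sfG_{\Gamma,M}|)\to H^\bullet(M)$ induced by the inclusion is computed on such a class only after a zig-zag down the staircase of the double complex to bidegree $(0,k)$, and identifying that zig-zag (equivalently, the localization to germs at the units followed by the Alexander--Spanier-to-de Rham map of Proposition \ref{loc-lie} and Section \ref{homogeneous}) with the explicit van Est map \eqref{eq:q.i.-lie} is precisely the lemma; ``naturality of the de Rham theorem for simplicial manifolds'' by itself does not deliver it. This identification is standard --- it is the content of Crainic's treatment of the van Est map, or can be done by writing down the explicit homotopy in the double complex, using that for the groupoid $\sfG_{\Gamma,M}$ the rows $\Omega^q(\tilde M^{\bullet+1})^\Gamma$ are acyclic for $q>0$ (free proper action plus an averaging/cut-off argument), which simultaneously shows that the edge map $H^\bullet_{\textup{diff}}(\sfG_{\Gamma,M})\to H^\bullet(B\Gamma)$ is an isomorphism and that it agrees with the Morita-equivalence identification \eqref{cov-gp}. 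So the proposal is a correct reduction to these standard comparisons rather than a self-contained proof; spelling out the staircase argument (or citing it) would close it.
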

Combined with the two main theorems of this paper, we therefore find exactly the covering index theorem of Connes-Moscovici:
\begin{theorem}[cf.\ \cite{conmos}]
Let $\Gamma$ be a discrete group acting on a manifold $\tilde{M}$ with quotient $\tilde{M}\slash\Gamma\cong M$. Let $D$ be a differential operator on $M$ and denote by $\tilde{D}$ its lift to $\tilde{M}$. For $\alpha\in H^{2k}_{\rm grp}(\Gamma;\C)$ we have
\[
\left<\Ind_\Gamma(\tilde{D}),\alpha\right>=\frac{1}{(2\pi\sqrt{-1})^k}\int_{T^*M}\psi^*\alpha\wedge\hat{A}(T^*M)\wedge {\rm ch}(\sigma(D))
\]
\end{theorem}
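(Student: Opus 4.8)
The plan is to derive the theorem by specializing Theorem~A and the Localized Index Theorem~\ref{gen-loc-ind} to the Lie groupoid $\sfG_{\Gamma,M}$ with unit space $M$ and arrow space $\tilde M\times_\Gamma\tilde M$, whose Lie algebroid is $TM$ and which is unimodular (a volume form on $M$ pulls back to a $\Gamma$-invariant one on $\tilde M$). First I would set up the dictionary coming from the Morita equivalence between $\sfG_{\Gamma,M}$ and the group $\Gamma$ viewed as a groupoid over a point. Under it the convolution algebra $\calA$ is Morita equivalent to $\calC^\infty_\textup{c}(\Gamma)$, so that $K_0(\calA)\cong K_0(\calC^\infty_\textup{c}(\Gamma))$ and the index class underlying $[\Ind(D)]_\textup{loc}$ is identified, after the forgetful map, with the $\Gamma$-index class $\Ind_\Gamma(\tilde D)$; on the cohomological side the same equivalence induces the isomorphism \eqref{cov-gp}, sending a group cocycle $\alpha\in Z^{2k}_\textup{grp}(\Gamma;\C)$ to a differentiable groupoid cocycle. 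Morita invariance of cyclic cohomology together with a computation on generators of the explicit formula \eqref{eq:chi-uni} then shows that $\chi(\alpha)$ represents the standard cyclic cocycle on $\calC^\infty_\textup{c}(\Gamma)$ attached to $\alpha$, and hence that the global pairing \eqref{global-pairing} becomes
\[
\langle\chi(\alpha),\Ch(\Ind(D))\rangle=\left<\Ind_\Gamma(\tilde D),\alpha\right>.
\]

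Next I would invoke Theorem~A, which gives $\langle\chi(\alpha),\Ch(\Ind(D))\rangle=\llangle [\Ind(D)]_\textup{loc},E(\alpha)\rrangle$, and then Lemma~\ref{lemma-cov}, identifying the van Est image $E(\alpha)$ with $\psi^*\alpha\in H^{2k}(M;\C)\cong H^{2k}_\textup{Lie}(TM;\C)$, where $\psi:M\to B\Gamma$ classifies the covering $\tilde M$. One first notes that the elliptic operator $D$ (acting, say, on sections of a bundle $E\to M$) is literally an element of $\calU(TM;E)=\calD(M)\otimes\End(E)$, that ellipticity in the sense of Definition~\ref{elliptic} coincides with classical ellipticity, and that the image of $D$ under $i:\calU(TM)\to\Psi^\infty(\sfG_{\Gamma,M})$ is exactly the $\Gamma$-invariant lift $\tilde D$; thus $[\Ind(D)]_\textup{loc}$ is the localized refinement of $\Ind_\Gamma(\tilde D)$, and combining with the previous display reduces the claim to evaluating $\llangle[\Ind(D)]_\textup{loc},\psi^*\alpha\rrangle$.

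Then I would feed this into the Localized Index Theorem~\ref{gen-loc-ind} with $A=TM$, so that $A^*=T^*M$, $L_A$ is canonically trivial, and $\pi^!TM$ is a Lie algebroid over $T^*M$, obtaining
\[
\llangle[\Ind(D)]_\textup{loc},\psi^*\alpha\rrangle=\frac{1}{(2\pi\sqrt{-1})^k}\int_{T^*M}\pi^*(\psi^*\alpha)\wedge\hat{A}(\pi^!TM)\wedge\rho^*_{\pi^!TM}{\rm ch}(\sigma(D)).
\]
The remaining task is to recognize the integrand, which is precisely the reduction already performed for the pair groupoid in Section~\ref{pair}: under the identification of the Chevalley--Eilenberg complex of $TM$ with the de Rham complex of $M$, the Lie algebroid class $\hat{A}(\pi^!TM)$ is pulled back along $\pi:T^*M\to M$ from $\hat{A}(TM)=\hat{A}(T^*M)$, and $\rho^*_{\pi^!TM}{\rm ch}(\sigma(D))$ is the pullback of the Chern character of the symbol class $[\sigma(D)]\in K^0(T^*M)$; assembling the three factors yields $\int_{T^*M}\psi^*\alpha\wedge\hat{A}(T^*M)\wedge{\rm ch}(\sigma(D))$, as asserted.

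I expect the main obstacle to be the first step: verifying on the cochain level that the characteristic pairing $\langle\chi(\alpha),\Ch(\Ind(D))\rangle$ on $\calA$ agrees with the classical $\Gamma$-equivariant index pairing, i.e. that under the Morita equivalence $\calA\sim\calC^\infty_\textup{c}(\Gamma)$ and the isomorphism \eqref{cov-gp} the cyclic cocycle $\chi(\alpha)$ is cohomologous to the group cyclic cocycle built from $\alpha$ by Connes--Moscovici. Once this compatibility is established, the remaining ingredients --- Theorem~A, Lemma~\ref{lemma-cov}, Theorem~\ref{gen-loc-ind}, and the pair-groupoid computation of Section~\ref{pair} --- are already in place, and the proof is essentially bookkeeping.
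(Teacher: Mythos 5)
Your proposal follows essentially the same route as the paper: the paper obtains this theorem by specializing Theorem~A and the localized index theorem to the groupoid $\sfG_{\Gamma,M}=\tilde M\times_\Gamma\tilde M\rightrightarrows M$ (whose Lie algebroid is $TM$), using Lemma~\ref{lemma-cov} to identify the van Est map with $\psi^*$ under the Morita isomorphism \eqref{cov-gp}, exactly as you do. Your write-up merely makes explicit the Morita-equivalence bookkeeping (identifying $\chi(\alpha)$ with the group cyclic cocycle and $\Ind(D)$ with $\Ind_\Gamma(\tilde D)$) that the paper leaves implicit, so it is a correct and faithful elaboration of the paper's argument.
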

\subsection{Foliations}
Let $M$ be a compact manifold with a regular foliation $\calF$. The {\em 
holonomy} (resp.~\emph{monodromy}) \emph{foliation groupoid} $\sfG^h_\calF\rightrightarrows M$ 
(resp.~$\sfG^m _\calF\rightrightarrows M$) consists of holonomy (resp.~homotopy) 
equivalence classes of paths in the same leaf of $\calF$.  In the case 
that $\calF$ is the whole tangent bundle of $M$, $\sfG^h$ is the pair 
groupoid $M\times M$ and $\sfG^m_M$ is the fundamental groupoid 
$\tilde{M}\times_\Gamma \tilde{M}$ considered in the previous subsection.

The Lie algebroid $A$ associated to $\sfG^h$ and $\sfG^m$ are the same and 
is the foliation $\calF$ over $M$. The dual $\calF^*$  of the Lie 
algebroid is a regular Poisson manifold, and $\pi^! \calF$ is the 
symplectic foliation on $\calF^*$.  The bundle $L$ is equal to 
$\bigwedge^{\rm top}\calF\otimes \bigwedge^{\rm top}T^*M$. According to 
\cite[Example 3.11]{elw}, $\calF$ is unimodular if and only if there is a 
transversely invariant measure $\nu$ on $M$. The Lie algebroid cohomology 
of $\calF$ is usually called the foliation cohomology of $\calF$. A 
leafwise elliptic operator $D$ lifts to defines a $\sfG^h$ ($\sfG^m$) 
elliptic operator. Theorem \ref{gen-loc-ind} now gives:
\begin{theorem}
\label{eq:foliation}
Let $\calF\subset TM$ be a foliation on $M$ and $D$ a leafwise elliptic differential operator on $M$. 
For a foliation cocycle $f\in \bigwedge^{2k}\calF^*\otimes L$ we have
\begin{equation*}
\llangle [\Ind(D)]_\textup{loc},f\rrangle=\frac{1}{(2\pi\sqrt{-1})^k}\int_{\calF^*}\pi^*f 
\wedge\hat{A}(\pi^!\calF)\wedge\rho^*_{\pi^!\calF}{\rm ch}(\sigma(D)).
\end{equation*}
When $\calF$ is unimodular, the integral on the the right hand side integral
can be computed on $M$:
\[
\llangle [\Ind(D)]_\textup{loc},f\rrangle=\frac{1}{(2\pi\sqrt{-1})^k}\int_{M}f 
\wedge\hat{A}(\calF^*)\wedge {\rm ch}_\calF(\sigma(D))\wedge \nu,
\]
where $\hat{A}(\calF^*)$, and  ${\rm ch}_\calF(\sigma(D))$ are the leafwise 
characteristic classes of $\calF^*$ and $\sigma(D)$ in 
$H^\bullet_{Lie}(\calF; \C)$.
\end{theorem}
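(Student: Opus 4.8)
The plan is to obtain the first formula as the special case $A=\calF$, $\sfG=\sfG^h_\calF$ (or $\sfG^m_\calF$) of Theorem~\ref{gen-loc-ind}, and then to derive the second formula from the first by integrating along the fibres of $\pi:\calF^*\to M$. For the first step I would verify that every ingredient of Theorem~\ref{gen-loc-ind} specialises as asserted. The holonomy (resp.\ monodromy) groupoid of $\calF$ is an integration of the Lie algebroid $\calF\to M$, so $A^*=\calF^*$; the transverse density bundle $L=\bigwedge^{\textup{top}}T^*M\otimes\bigwedge^{\textup{top}}A$ becomes $\bigwedge^{\textup{top}}\calF\otimes\bigwedge^{\textup{top}}T^*M$ with its canonical $\sfG$-action; and by Lemma~\ref{id-lie} the pull-back algebroid $\pi^!A$ is the symplectic foliation $\pi^!\calF$ of the Poisson manifold $\calF^*$. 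A leafwise elliptic differential operator $D$ acting on sections of a bundle $E$ is assembled from multiplication operators in $\calC^\infty(M)$ and leafwise vector fields in $\Gamma^\infty(M,\calF)$, hence defines an element of $\calU(\calF;E)=\calU(\calF)\otimes\End(E)$; it is elliptic in the sense of Definition~\ref{elliptic} precisely because its leafwise principal symbol is pointwise invertible off the zero section of $\calF^*$, and under $i:\calU(\calF;E)\hookrightarrow\Psi^\infty(\sfG^h_\calF;E)$ it becomes the lift of $D$ to the groupoid. With these identifications in hand the first displayed equality is Theorem~\ref{gen-loc-ind} read off verbatim. (One point to record is that the holonomy or monodromy groupoid of a foliation need not be Hausdorff; the argument applies whenever it is, and the cohomological formula is unaffected.)

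For the second formula I would proceed in the following order. First, unimodularity: by \cite[Example~3.11]{elw} the hypothesis provides a transversely invariant measure $\nu$ on $M$, equivalently a $\sfG$-invariant trivialisation of $L=\bigwedge^{\textup{top}}\calF\otimes\bigwedge^{\textup{top}}T^*M$, under which $f$ becomes an ordinary leafwise $2k$-form and the $L$-coefficient of the whole integrand becomes the fixed density $\nu$. Second, since the fibres of $\pi:\calF^*\to M$ are vector spaces, hence contractible, Corollary~\ref{cor:alg-coh} (i.e.\ \cite[Thm~2]{crainic}) shows $\pi^*$ is an isomorphism on Lie algebroid cohomology; in particular $\hat{A}(\pi^!\calF)$ is pulled back from $M$, and an explicit computation of a compatible complex structure on $(\pi^!\calF,\Theta)$ over the zero section identifies $\pi^!\calF|_M\cong\calF\otimes\C$, whence $\hat{A}(\pi^!\calF)=\pi^*\big(\hat{A}(\calF^*)\big)^{2}$, one factor of which will remain outside the fibre integral and the other absorbed into a Todd-type normalisation. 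Third, the projection formula for fibre integration, $\int_{\calF^*}\pi^*\omega\wedge\beta=\int_M\omega\wedge\pi_!\beta$, applied with $\omega=f\wedge\hat{A}(\calF^*)$ (carrying $\nu$), reduces the statement to the identity $\hat{A}(\calF^*)\wedge\pi_!\big(\rho^*_{\pi^!\calF}{\rm ch}(\sigma(D))\big)={\rm ch}_\calF(\sigma(D))$ in $H^\bullet_{\textup{Lie}}(\calF;\C)$; this is the leafwise incarnation of the classical Atiyah--Singer passage from the symbol class in $K^0$ of the leafwise cotangent bundle $\calF^*$ to a characteristic class on $M$ via the Thom isomorphism.

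I expect this last identification to be the main obstacle: one has to make the leafwise Thom isomorphism and the fibre-integration map $\pi_!$ rigorous for leafwise differential forms valued in the flat line bundle $L$, and to pin down the Todd bookkeeping so that exactly one factor $\hat{A}(\calF^*)$ is displayed while the remaining contribution is the intrinsic leafwise characteristic class ${\rm ch}_\calF(\sigma(D))$ of the symbol. By contrast, the specialisation of Theorem~\ref{gen-loc-ind}, the identification of $\pi^!\calF$ with the symplectic foliation, the invariant trivialisation coming from $\nu$, and the projection formula are all routine given the machinery already assembled in the previous sections; the only genuine extra hypothesis to flag in the foliation setting is the Hausdorffness of the holonomy (or monodromy) groupoid.
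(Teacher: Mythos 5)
Your proposal is correct and essentially the paper's own argument: the paper obtains the first formula simply by specializing Theorem \ref{gen-loc-ind} to $A=\calF$ with $\sfG$ the holonomy (or monodromy) groupoid, exactly as you do. The unimodular reduction to an integral over $M$ is likewise treated there as the routine fibre-integration/leafwise Thom--Riemann--Roch manipulation you outline (the paper supplies no more detail than your sketch), and your Hausdorffness caveat just reflects the paper's standing assumption of Hausdorff groupoids.
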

When $f$ is the constant function 1, on the 
holonomy foliation groupoid $\sfG^h_\calF$ we have recovered the 
Connes--Skandalis \cite{CoSk} foliation index theorem for transversely 
invariant measure
\begin{align*}
\tr_\nu([\Ind(D)])&=\left<[\Ind(D)]_\textup{loc},1\right>\\
&=\frac{1}{(2\pi\sqrt{-1})^k}\int_{M}\hat{A}(\calF^*)\wedge 
{\rm ch}_\calF(\sigma(D))\wedge \nu.
\end{align*}

Connes' foliation index theorem in general computes the pairing between 
the K-theory of the holonomy foliation groupoid algebra and its cyclic 
cohomology. Such a pairing does not tell much information about the 
topology of leaves, as the cyclic cohomology of the holonomy foliation 
groupoid algebra is computed \cite{cm} to be equal to the cohomology of 
the leaf space. Our localized index formula of Theorem \ref{eq:foliation} generalizes 
the Connes--Skandalis foliation index theorem for transversely invariant 
measure to detect more leafwise information of a longitudinal elliptic 
operator due to the existence of the cycle $f$ in Theorem \ref{eq:foliation}.

\begin{remark}
The cohomology of the classifying space of $\sfG$ is computed by the 
hypercohomology of the de Rham differential forms on $B\sfG$ with $\sfG$ 
being the etale groupoid associated to a
complete transversal. Connes \cite{connes} computed the pairing between the 
cohomology class of $B\sfG$ (as a component of the cyclic cohomology) with 
the K-theory of the holonomy (monodromy) foliation algebra.

The differentiable cohomology of $\sfG$ is a subcomplex of $\Gamma(B\sfG; 
\Omega^\bullet\otimes L)$ identified as $\Gamma(B\sfG, \Omega^\textup{top}\otimes 
L)$. This gives a natural map from the
differentiable cohomology of $\sfG$ to the cohomology of $B\sfG$, and 
therefore the cyclic cohomology of the groupoid algebra. Our global 
groupoid index theorem provided in Theorem \ref{eq:foliation} computes the pairing between these cyclic 
cohomology classes and the K-group of the holonomy (monodromy) groupoid 
algebra.
\end{remark}


\begin{thebibliography}{}
\bibitem[BeHe]{HM} M. Benameur, J. Heitsch.
  {\em The higher fixed point theorem for foliations. I. Holonomy invariant currents}, 
  J. Funct. Anal. \textbf{259} (2010), no. 1, 131-173.

\bibitem[BNW]{bnw} M. Bordemann, N. Neumaier, and S. Waldmann. 
  \emph{Homogenous Fedosov star products on cotangent bundles: 
  Weyl and standard ordering with differential operator representation},
  Comm. Math. Phys. \textbf{198} 363-396 (1998).

\bibitem[BNPW]{bnpw} M. Bordemann, N. Neumaier, M. Pflaum, and S. Waldmann.  
  {\em On representations of star product algebras over cotangent spaces on Hermitian line bundles}, 
  J. Funct. Anal. 199 (2003), no. 1, 1-47.

\bibitem[CR06]{Carrillo-Rouse} P. Carrillo-Rouse, 
  {\em An analytic index for Lie groupoids}, \texttt{math.KT/0612455}.

\bibitem[CR10]{rouse} P. Carrillo-Rouse.
  {\em Higher localized analytic indices and strict deformation quantization},
  in {\em Deformation Spaces}, 91-111, Aspects Math \textbf{E40}, Vieweg+Teubner,
  Wiesbaden, (2010).

\bibitem[CFW]{cfw} A. Cattaneo, G. Felder, and T. Willwacher. {\em The character map in deformation quantization}, \texttt{ arXiv:0906.3122} (2009).

\bibitem[Co]{connes} A. Connes.
  \textit{Noncommutative Geometry}. Academic Press, (1994).

\bibitem[CoMo90]{conmos} A. Connes, H.~Moscovici.
  \textit{Cyclic cohomology, the Novikov conjecture and hyperbolic groups},
  Topology \textbf{29}, no.~3, 345-388 (1990)

\bibitem[CoMo98]{conmos:hopf} A. Connes, H. Moscovici. {\em Hopf algebras, cyclic 
cohomology and the transverse index theorem}, Comm. Math. Phys. 
\textbf{198} (1998), no. 1, 199-246.

\bibitem[CoMo01]{conmos:algebroid} A. Connes, H. Moscovici.
\emph{Differentiable cyclic cohomology and {H}opf algebraic structures
  in transverse geometry}, Essays on geometry and related topics, Vol. 1, 2,
  Monogr. Enseign. Math., vol.~38, Enseignement Math., Geneva, (2001),
 217-255.

\bibitem[CoSk]{CoSk} A. Connes, G. Skandalis. \emph{The longitudinal index theorem for foliations},   Publ. Res. Inst. Math. Sci. \textbf{20} No.~6, RIMS Kyoto, 1139-1183 (1984).

\bibitem[Cr]{crainic} M. Crainic. 
   \emph{Differentiable and algebroid cohomology, van Est isomorphisms, and characteristic classes},  Comment. Math. Helv. \textbf{78} (2003), no. 4, 681-721.

\bibitem[CF]{cf} M. Crainic, R. Fernandez.   \emph{Integrability of {L}ie brackets}, Ann. Math. \textbf{157} (2003), 575-620.

\bibitem[CrMo]{cm} M. Crainic, I.  Moerdijk. \emph{Foliation groupoids and their cyclic homology}, 
   Adv. Math. \textbf{157} (2001), no. 2, 177-197.

\bibitem[DoRu]{dr}  V.A. Dolgushev, V.N. Rubtsov. \emph{An Algebraic Index Theorem 
  for Poisson Manifolds}, J. Reine Angew. Math. \textbf{633} (2009) 77-113. 

\bibitem[DoHuKa]{DoHuKa} R.G. Douglas, S. Hurder, and J. Kaminker.
  \emph{Cyclic cocycles, renormalization, and eta-invariants},  Inventiones \textbf{103} (1991), no. 1, 101-179.

\bibitem[ELW]{elw} S. Evens, J.-H. Lu, and A. Weinstein. 
   \emph{Transverse measures, the modular class and a cohomology pairing for Lie algebroids}, 
    Quart. J. Math. Oxford Ser. (2) \textbf{50} (1999), no. 200, 417--436. 

\bibitem[Fe]{fedosov} B. Fedosov. \emph{A simple geometrical construction of deformation quantization},  J. Diff. Geom. \textbf{40} (1994), no. 2, 213-238. 

\bibitem[FFS]{ffs} B. Feigin, G. Felder, and B. Shoikhet. {\em Hochschild cohomology of the Weyl algebra and traces in deformation quantization}, Duke Math. J.~\textbf{127}, no. 3, 487-517 (2005).

\bibitem[GoLo]{G} A. Gorokhovsky, J. Lott, {\em Local index theory over foliation groupoids}, 
  Adv. Math. \textbf{204} (2006), no. 2, 413-447.

\bibitem[KoPo]{kp} N. Kowalzig, H. Posthuma. 
   \emph{The cyclic theory of Hopf algebroids}, J. Noncomm. Geom. \textbf{5} (2011) 423-476.

\bibitem[LaNi]{ln} R. Lauster, V. Nistor. {\em Analysis of geometric operators on open manifolds:
A groupoid approach}, in {\em Quantization of singular symplectic quotients}, 181-229, 
Progr. Math., 198, Birkh\"auser, Basel, (2001).  

\bibitem[Lo]{loday}  J.L. Loday. {\em Cyclic homology.}  
   Second edition. Grundlehren der Mathematischen Wissenschaften, \textbf{301}. 
  Springer-Verlag, Berlin, (1998).

\bibitem[MaHi]{mack} K. Mackenzie, P. Higgins. {\em Algebraic constructions 
in the category of Lie Algebroids}, J. of Algebra \textbf{129} (1990), 194-230.

%\bibitem[Mo]{mont} B. Monthubert. \emph{Pseudodifferential calculus on manifolds with corners and groupoids}. 
%Proc. Amer. Math. Soc. \textbf{127} (1999), no. 10, 2871???2881.

\bibitem[MoPi]{montpier} B. Monthubert and F. Pierrot. \emph{Indice analytique et groupoides de Lie},  C. R. Acad. Sci. Paris S\'er. I Math. \textbf{325} (1997), no. 2, 193-198.

%\bibitem[MoSch]{ms} C. Moore, C. Schochet. 
%  \emph{Global Analysis on Foliated Spaces}.

\bibitem[MoWu]{mw}
 H. Moscovici, F. Wu.
  \textit{Index theory without symbols},  in {\em  $C^*$-algebras: 1943--1993} (San Antonio, TX, 1993),
  304--351, Contemp. Math., \textbf{167},  Amer. Math. Soc.,
  Providence, RI, (1994).

\bibitem[NeTs95]{nt:family} R. Nest, B. Tsygan. 
  {\em Algebraic index theorem for families}, Adv. Math. \textbf{113} (1995), no. 2, 151-205.

\bibitem[NeTs01]{nt-hol} R. Nest, B. Tsygan. 
   \emph{Deformations of symplectic Lie algebroids, deformations of holomorphic symplectic structures, and index theorems}. Asian J. Math. \textbf{5} (2001), no. 4, 599-635.

\bibitem[NeWa]{nw} N. Neumaier, S. Waldmann. 
  \emph{Deformation quantization of Poisson structures associated to Lie algebroids}, Symmetry Integrability Geom. Methods Appl. (SIGMA) \textbf{5} (2009).

\bibitem[NWX]{nwx} V. Nistor, A. Weinstein, and P. Xu. 
   \emph{Pseudodifferential operators on differential groupoids}, 
   Pacific J. Math. \textbf{189} (1999), no. 1, 117-152.

\bibitem[Pf]{P1} M.J. Pflaum.
  {\it A deformation-theoretical approach to Weyl quantization on 
  riemannnian manifolds},
  Lett. Math. Physics \textbf{45} 277-294 (1998).

\bibitem[PPT]{ppt} M. Pflaum, H. Posthuma, and X. Tang. 
  \emph{Cyclic cocycles on deformation quantizations and higher index theory}.,
  Adv. Math.~\textbf{223} (2010) 1958-2021.

\bibitem[Sh]{shubin} M.A.~Shubin. \emph{Pseudodifferential operators and spectral theory},
   Springer-Verlag, Berlin, Heidelberg, New York  (1987).

\bibitem[TaTs]{tt} D. Tamarkin, B. Tsygan.
  \emph{Cyclic formality and index theorems}.,
  Euro Conf\'erence Mosh\'e Flato 2000, Part II (Dijon). 
  Lett. Math. Phys. \textbf{56} (2001), no. 2, 85-97. 

\bibitem[WeXu]{wx} A. Weinstein, P. Xu. \emph{Extensions of symplectic groupoids and quantization},  J. Reine Angew. Math. {\bf 417} (1991), 159-189. 
\end{thebibliography}
\end{document}